\documentclass[a4paper,10pt]{article}

\usepackage[utf8]{inputenc}
\usepackage{authblk}
\usepackage{amsthm,amsmath,amssymb,latexsym,dsfont}

\usepackage{enumerate}
\usepackage{tikz}
\usetikzlibrary{arrows.meta, positioning}

\usepackage{graphicx}
\usepackage{caption}
\usepackage{subcaption}

\usepackage{xcolor}

\usepackage{url}

\newtheorem{theorem}{Theorem}[section]
\newtheorem{proposition}[theorem]{Proposition}
\newtheorem{lemma}[theorem]{Lemma}
\newtheorem{corollary}[theorem]{Corollary}

\newtheorem{definition}[theorem]{Definition}

\newtheorem{assumption}[theorem]{Assumption}

\newcommand{\N}{\mathbb{N}}
\newcommand{\Z}{\mathbb{Z}}

\newcommand{\R}{\mathbb{R}}


\newcommand{\B}{\mathcal{B}}

\newcommand{\F}{\mathcal{F}}

\newcommand{\X}{\mathcal{X}}
\newcommand{\Y}{\mathcal{Y}}

\renewcommand{\P}{\mathbb{P}}
\newcommand{\E}{\mathbb{E}}

\newcommand{\law}[1]{\text{Law}(#1)}

\newcommand{\Pas}{\text{a.s.}}
\newcommand{\ind}{\mathds{1}}

\newcommand{\eps}{\varepsilon}

\newcommand{\ga}{\gamma}

\newcommand{\dtv}{d_{\text{TV}}}
\newcommand{\cov}{\mathrm{Cov}}
\newcommand{\var}{\mathrm{Var}}

\newcommand{\dint}{\mathrm{d}}  
\newcommand{\leb}{\mathrm{Leb}}

\usepackage[margin=30mm]{geometry}

\newcommand{\lfrf}[1]{\left\lfloor #1\right\rfloor}

\title{Mixing properties of some Markov chains models in random
	environments\thanks{The first author was supported by the János Bolyai Research Scholarship of the Hungarian Academy of Sciences, the Thematic Excellence Program 2021 (National Research, Development and Innovation Office, subprogram “Artificial intelligence, large networks, data security: mathematical foundation and applications”), and the grant K 143529.}}
\author[1,2]{Attila Lovas}
\author[3]{Lionel Truquet}


\affil[1]{HUN-REN Alfr\'ed R\'enyi Institute of Mathematics, Budapest, Hungary}
\affil[2]{Budapest University of Technology and Economics, Budapest, Hungary}
\affil[3]{Univ Rennes, Ensai, CNRS, CREST -- UMR 9194, F-35000 Rennes, France}
\date{\today}

\setcounter{Maxaffil}{0}

\begin{document}
	
	\maketitle
	
	\begin{abstract}	
		Markov chains in random environments (MCREs) have recently attracted renewed interest, as these processes naturally arise in many applications, such as econometrics and machine learning. Although specific asymptotic results, such as the law of the large numbers and central limit theorems, have been obtained for some of these models, their annealed dependence properties, such as strong mixing properties, are not well understood in general. We derive strong mixing properties for a wide range of MCREs that satisfy some drift/small set conditions, with general assumptions on the corresponding stochastic parameters and the mixing properties of the environments. We then demonstrate the wide range of applications of our results in time series analysis and stochastic gradient Langevin dynamics, with fewer restrictions than those found in existing literature.
		
	\end{abstract}

	\section{Introduction}
	
	Finite-memory information processing systems arise in various domains, including communication networks, control systems, and machine learning applications. In particular, learning from data streams has become a prominent and actively researched area. Markov Chains in Random Environments (MCREs) offer a mathematical framework for modeling information processing systems with finite memory but for which an additional stochastic disorder has an influence on the dynamics.

Though many existing articles \cite{stenflo}, \cite{orey1991markov}, \cite{kifer1} or \cite{cogburn1984ergodic} have already investigated some theoretical properties of these objects, there are some important new aspects concerning the case of unbounded and continuous state spaces which have been only studied recently. For instance \cite{lovas2021ergodic}, \cite{Truquet1}, \cite{doukhan2023stationarity} derived many sufficient conditions on the transition mechanism of the MCRE to ensure the existence of a stationary and ergodic solution, from which various applications can be obtained such as the study of the long-time behavior of stochastic gradient Langevin algorithms, some limit properties of queing systems with time dependencies or the existence of stationary autoregressive time series with exogenous covariates. 

Beyond the use of stationarity and ergodic properties which are sufficient to deal with some specific applications, many other limit theorems such as central limit theorems for partial sums or convergence of the marginal probability measure for some stochastic iterations require an extra amount of work. Usually, there are two points of view for analyzing processes in random environments. The first one, called quenched asymptotics, allows to consider the limiting properties for some specific statistics of the process, conditionally on the environment. See for instance \cite{kifer1998limit} for quenched central limit theorems obtained for MCRE satisfying Doeblin's type minorization condition and \cite{cogburn91} for a functional CLT when the state space is discrete. 
In this work, we will investigate the second kind of behavior, called annealed, for which the convergence of some statistics is obtained unconditionally on the environment. Such annealed results can be easily obtained by integrating quenched results but we will use another approach which consists in controlling the decay of some classical dependence coefficients used for deriving an asymptotic theory for stochastic processes. For instance, the so-called $\alpha-$mixing coefficients (sometimes called strong-mixing coefficients) are widely used in this context. Controlling strong mixing coefficients of such models has also its own interest, since a battery of asymptotic results and statistical procedures have been developed for such processes. See for instance \cite{doukhan1995mixing}, \cite{Rio} or \cite{Bradley2005}. In this context, it is natural to assume that the environment itself satisfies some strong mixing properties and
to study the so-called "tranfert mixing property" in the spirit of the recent works \cite{lovas2024transition} or \cite{TRUQUET2023294}. More precisely, if $(Y_t)_t$ denotes the environment or the sequence of exogenous covariates and $(X_t)_t$ the MCRE, the transfert mixing property allows to prove a mixing property for the pair $(X_t,Y_t)_t$ from the mixing property of $(Y_t)_t$. 
    
	Our work is motivated by several applications. 
    A first one concerns time series analysis and the stability properties of autoregressive models which are widely encountered in Econometrics. Beyond the use of linear processes such as ARMA models, the stability of non-linear autoregressive models is often studied from Markov chains techniques. 
See for instance \cite{tjostheim1990non} for some general results obtained in that way. The textbook \cite{douc2014nonlinear} also provides many interesting complements. When an exogenous times series is incorporated in the dynamic of the quantity of interest, the resulting time series is no more Markovian and results based on Markov chains techniques are useless. However Markov chains in random environments are meaningful when the time series $(Y_t)_t$ is assumed to be strictly exogenous in the sense of \cite{chamberlain1982general}, see Definition $3$ or equivalently Condition $S'$. Such a strong assumption holds true in the dynamic 
\begin{equation}\label{recur}
X_t=F\left(X_{t-1},Y_{t-1},\varepsilon_t\right),\quad t\geq 1,
\end{equation}
as long as the time series $(Y_t)_t$ is independent from the noise sequence $(\varepsilon_t)_t$ which is assumed to have i.i.d. coordinates. Connections between time series with strictly exogenous covariates and Markov chains in random environements
has been exploited recently in order to get sufficient conditions for the existence of stationary sequences satisfying (\ref{recur}). See in particular \cite{Truquet1} or \cite{doukhan2023stationarity} and the references therein. 
With respect to these references, we obtain strong mixing properties for a wide range of unbounded autoregressive models and non-necessarily stationary covariates. 
We also complement the resuts of \cite{TRUQUET2023294} which mainly concern stationary environments and bounded state spaces with a uniform minorization condition, often called Doeblin's condition in Markov chains theory. Such a uniform condition is often too restrictive when the support of the time series is not compact. Applications to parametric or non-parametric statistics will be then considered.

Another application concerns stochastic gradient Langevin algorithms as in \cite{5author}, \cite{6} or \cite{lovas}. In this case, we will derive asymptotic properties of such algorithms under less restrictive conditions on the parameters of the model. More generally, our work considers the mixing properties of MRCE when the drift/small set parameters of the model, which depend on the environment, are not uniformly bounded. This extension involves a substantial technical difficulty but it allows to consider a large class of examples and many statistical applications for MCRE with unbounded exogenous processes.

The paper is organized as follows. In Section \ref{sec:main}, we present our main results. 
Applications to time series analysis are presented in Section \ref{sec:econometrics} while convergence results for Langevin type dynamics are given in Section \ref{sec:langevin}. Finally most of our proofs are postponed to Section \ref{ap:proofs} which also contains many technical lemmas of independent interest.

	\bigskip
	\noindent
	{\bf Notations and conventions.} Let $\R_{+}:=\{x\in\R:\, x\geq 0\}$
	and $\N_{+}:=\{n\in\N:\ n\geq 1\}$. Let $(\Omega,\F,\P)$ be a probability space. We denote by $\E[X]$ the expectation of a random variable $X$. For $1\le p<\infty$, $L^p$ is used to denote the usual space of $p$-integrable real-valued random variables and $\Vert X \Vert_p$ stands for the $L^p$-norm of a random variable $X$.
	
	In the sequel, we employ the convention that $\inf \emptyset=\infty$, $\sum_{k}^{l}=0$ and $\prod_{k}^{l}=1$ whenever $k,l\in\Z$, $k>l$. 
	Lastly, $\langle \cdot\mid \cdot\rangle$ denotes the standard Euclidean inner product
	on finite dimensional vector spaces. For example, on $\R^d$, $\langle x\mid y\rangle = \sum_{i=1}^{d} x_i y_i$.
	
	\section{Main results}\label{sec:main}
	
	Let $\X$ and $\Y$ be complete and separable metric spaces.
	The function \( Q:\Y \times \X \times \B(\X) \to [0,1] \) is a parametric probabilistic kernel, meaning that
	\begin{enumerate}[i]
		\item for each pair \( (y, x) \in \Y \times \X \), the mapping \( B \mapsto Q(y, x, B) \) defines a Borel probability measure on the Borel sigma-algebra \( \B(\X) \), and
		
		\item for any set \( B \in \B(\X) \), the mapping \( (x, y) \mapsto Q(y, x, B) \) is measurable with respect to the product sigma-algebra \( \B(\X) \otimes \B(\Y) \).
	\end{enumerate}
	
	\begin{definition}\label{def:MCRE}
	Let $(Y_n)_{n \in I}$ be a $\Y$-valued process, where $I = \Z$ or $I=[N,\infty)$ for some $N\in\Z$. The process $(X_n)_{n \in I}$ is a Markov chain in a random environment if there exists a parametric probabilistic kernel $Q: \Y \times \X \times \B(\X) \to [0,1]$ such that
	$$
	\P(X_{n+1} \in B \mid (X_k)_{k \le n}, (Y_k)_{k \in I}) = Q(Y_n, X_n, B), \quad B \in \B(\X),\,\, n \in I.
	$$
	\end{definition}
	The process $(X_n)_{n\in I}$ in Definition \ref{def:MCRE} defines a time-inhomogeneous Markov chain conditionally on the process $(Y_n)_{n\in I}$ being interpreted as random environment. This characterization leads us to term this process a Markov chain in a random environment (MCRE).
    In the remainder of the paper, we focus on two settings: (i) when \(I = \mathbb{N}\), and the initial value \(X_0\) is independent of the environment \((Y_n)_{n \in \mathbb{N}}\); and (ii) when \(I = \mathbb{Z}\), and the environment \((Y_n)_{n \in \mathbb{Z}}\) is a stationary process. In the latter case, it was shown in \cite{Truquet1} that under mild conditions, there exists a process \((X_n^*)_{n \in \mathbb{Z}}\) such that the joint process \((X_n^*, Y_n)_{n \in \mathbb{Z}}\) is stationary, and \((X_n^*)_{n \in \mathbb{Z}}\) is a Markov chain in the random environment \((Y_n)_{n \in \mathbb{Z}}\), moreover the distribution of \((X_n^*, Y_n)_{n \in \mathbb{Z}}\) is unique.

    \begin{definition}\label{def:act}
		Let \( Q:\Y\times \X \times \B(\X) \to [0,1] \) be a parametric probabilistic kernel. For a bounded measurable function \( \phi:\X \to \mathbb{R} \), we define
		\[
		[Q(y)\phi](x) = \int_\X \phi(z) Q(y, x, \dint z), \quad (y, x) \in \Y\times\X.
		\]
		This definition also applies to any non-negative measurable function \( \phi \).
			
		For	any collection $y_1, \ldots, y_{p} \in \Y$ with $p\ge 1$, the successive application of the kernels is interpreted in the order corresponding to the composition of conditional expectations:
		\[
		[Q(y_{1})\cdots Q(y_p)\phi] = [Q(y_1)[\ldots [Q(y_{p})\phi]]].
		\]
	\end{definition}
   Let \((X_n)_{n\in I}\) be a Markov chain in a random environment (MCRE) \((Y_n)_{n\in I}\), with \(I=\N\) or \(\Z\), and parametric kernel \(Q:\Y\times\X\times\B (\X)\to [0,1]\).  
   For any \(p \ge 1\), the subsequence \(X_n^p := X_{np}\), \(n\in I\), is again an MCRE, now in the environment  
    \[
    Y_n^p := (Y_{np},\ldots,Y_{np+p-1}) \in \Y^p, \quad n\in I,
    \]
    with associated parametric kernel
    \begin{equation}\label{eq:Qp}
    Q^p((y_1,\ldots,y_p),x,B) := [Q(y_1)\cdots Q(y_p)\ind_B](x),
    \quad (y_1,\ldots,y_p) \in \Y^p,\, x \in \X,\, B \in \B(\X).
    \end{equation}

	Markov chain in random environments and nonlinear autoregressive models incorporating exogenous regressors are closely related. If $f: \X \times \Y \times [0,1] \to \X$ is a measurable function, and $\nu$ is a probability measure on $\B ([0,1])$, then
	\begin{equation}\label{eq:f_to_Q}
	Q(y, x, B) = \int_{[0,1]} \ind_{\left\{f(x, v, z) \in B \right\}} \, \nu(\dint z)
	\end{equation}
	is a parametric probabilistic kernel. The converse of this statement is also true: any parametric kernel $Q$ admits a representation of the form \eqref{eq:f_to_Q}. Of course, this representation is not unique.
	
	\subsection{Transition of mixing properties}\label{sec:trans_mixing}
    
	There has been extensive research on random sequences with well-defined structures, such as Markov chains, martingales, and Gaussian processes. However, by the mid-20th century, it became evident that a more general framework for statistical inference was needed—one that could support the analysis of time series that did not fit into these classical structures but exhibited certain asymptotic independence properties. 
	
	This need led to the development of the broad theory of weakly dependent sequences, as well as the formulation of various strong mixing conditions, which provided a foundation for handling such cases.
	Weak dependence models offer a powerful approach to capturing long-range interdependencies while allowing for non-stationarity. For instance, in a Markov chain in a random environment (MCRE), the influence of the environment $(Y_n)_{n \in I}$ can introduce dependencies in the time series that persist over time. Therefore, understanding how the process $(X_n)_{n \in I}$ inherits the mixing properties of the environment $(Y_n)_{n \in I}$ opens the door to the statistical analysis of MCREs.
	
	Several notions of mixing are discussed in the literature. The interested reader is encouraged to consult the excellent survey by Bradley \cite{Bradley2005} and references therein, for instance. In this work, we focus on three types of strong mixing conditions: $\alpha$-mixing, $\phi$-mixing, and $\psi$-mixing. For any two sub-$\sigma$-algebras $\mathcal{G}, \mathcal{H} \subset \mathcal{F}$, we define the following measures of dependency:
	
	\begin{align}\label{eq:dep}
		\begin{split}
			\alpha(\mathcal{G}, \mathcal{H}) &= \sup\left\{\left|\P(G \cap H) - \P(G)\P(H)\right| : G \in \mathcal{G}, H \in \mathcal{H}\right\} \\
			\phi(\mathcal{G}, \mathcal{H}) &= \sup\left\{\left|\P(H \mid G) - \P(H)\right| : G \in \mathcal{G}, H \in \mathcal{H}, \P(G) > 0\right\} \\
			\psi(\mathcal{G}, \mathcal{H}) &= \sup\left\{\left|\frac{\P(G \cap H)}{\P(G)\P(H)} - 1\right| : G \in \mathcal{G}, H \in \mathcal{H}, \P(G) > 0, \P(H) > 0\right\}
		\end{split}
	\end{align}
	
	Furthermore, consider an arbitrary sequence of random variables $(W_t)_{t \in I}$. We define the $\sigma$-algebras $\mathcal{F}_{t,s}^W := \sigma \left(W_k,\, k \in I,\, t \le k \le s \right)$, where $-\infty \le t \le s \le \infty$. If $I \cap [t,s] = \emptyset$, then $\mathcal{F}_{t,s}^W$ is defined as $\{\emptyset, \Omega\}$. Additionally, we define the mixing coefficients as follows:
	\begin{align*}
		\alpha^W (n) &= \sup_{j \in \mathbb{Z}} \alpha \left(\mathcal{F}_{-\infty,j}^W, \mathcal{F}_{j+n,\infty}^W \right), \\
		\phi^W (n) &= \sup_{j \in \mathbb{Z}} \phi \left(\mathcal{F}_{-\infty,j}^W, \mathcal{F}_{j+n,\infty}^W \right),\,\,
		\text{and}\,\, 
		\psi^W (n) = \sup_{j \in \mathbb{Z}} \psi \left(\mathcal{F}_{-\infty,j}^W, \mathcal{F}_{j+n,\infty}^W \right).
	\end{align*}
	
	The sequences of strong mixing coefficients $\alpha^W(n)$, $\phi^W(n)$, and $\psi^W(n)$, for $n \ge 1$, are evidently non-increasing. The sequence $(W_n)_{n \in I}$ is $\alpha$-mixing if $\alpha^W(n) \to 0$, $\phi$-mixing if $\phi^W(n) \to 0$, and $\psi$-mixing if $\psi^W(n) \to 0$ as $n \to \infty$. The following implications hold between these strong mixing concepts:
	\[
	\psi\text{-mixing} 
	\Longrightarrow 
	\phi\text{-mixing} 
	\Longrightarrow 
	\alpha\text{-mixing}.
	\]
	
	\smallskip 
	In \cite{lovas2024transition}, we introduced the \emph{coupling condition} for nonlinear autoregressive processes with exogenous covariates. 
Coupling inequalities of this type have already been employed in \cite{TRUQUET2023294} and \cite{lovasCLT} to derive upper bounds for the strong mixing coefficient \(\alpha^X(n)\), \(n \in \mathbb{N}\). 
The following lemma refines the formulation in \cite{lovas2024transition}: here, the coupling condition pertains not to the iteration mechanism itself, but to a property of the distribution of the process \((X_n)_{n \in \mathbb{N}}\). 
Furthermore, it can be regarded as a generalization of Lemma~1.2 in \cite{lovas2024transition}, since it suffices to impose the coupling condition on a subsequence \((X_{np})_{n \in \mathbb{N}}\) for an arbitrary \(p \ge 1\), rather than on the full sequence.
	\begin{lemma}\label{lem:coupl_trans_mix}
    Let $(X_n)_{n \in \N}$ be a Markov chain in the random environment $(Y_n)_{n\in \N}$ with parametric kernel $Q$, and suppose that the initial state $X_0$ is independent of $(Y_n)_{n\in \N}$.  
Assume that for some $p \ge 1$ there exist a measurable function
\[
f^p : \X \times \Y^p \times [0,1] \to \X
\]
and an i.i.d.\ process $(\eps_n)_{n \in \N}$ on $[0,1]$, independent of both $(Y_n)_{n\in \N}$ and $(X_n)_{n\in\N}$, such that
\[
Q^p((y_1,\ldots,y_p), x, B) = \E\left[ \ind_{\{ f^p(x, (y_1,\ldots,y_p), \eps_0) \in B \}} \right],
\quad (y_1,\ldots,y_p) \in \Y^p, \, x \in \X, \, B \in \B (\X).
\]
Furthermore, suppose that there exists a deterministic $x_0 \in \X$ such that
\begin{equation}\label{eq:coupling_cond}
b(n) := \sup_{j \in \N} \P \left( Z_{jp,(j+n)p}^{X_{jp}} \neq Z_{jp,(j+n)p}^{x_0} \right) \to 0,
\quad n \to \infty,
\end{equation}
where, for any (possibly random) $x \in \X$, the process $Z_{sp,tp}^{x}$, $t \ge s$, denotes the trajectory starting from $x$ at time $sp \in \N$:
\begin{equation}\label{eq:iter}
Z_{sp,tp}^{x} =
f^p\left( Z_{sp,(t-1)p}^{x}, Y_{t-1}^p, \eps_{tp} \right),
\quad t > s, \qquad
Z_{sp,sp}^{x} = x.
\end{equation}

Then, for $n\ge p$ and \(0 \le m < \lfloor \frac{n}{p} \rfloor\),we have the following inequality:
\begin{equation}\label{eq:trans_mix}
\alpha^{X,Y}(n) \le \alpha^{Y}(mp) + b\!\left(\lfloor \tfrac{n}{p} \rfloor - 1 - m\right).
\end{equation}
\end{lemma}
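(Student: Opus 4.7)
The plan is to fix $j\ge 0$ together with sets $G \in \F^{X,Y}_{-\infty,j}$ and $H \in \F^{X,Y}_{j+n,\infty}$, bound $\bigl|\P(G\cap H) - \P(G)\P(H)\bigr|$ by $\alpha^Y(mp) + b\bigl(\lfloor n/p\rfloor - 1 - m\bigr)$, and then take suprema. Setting $q := \lfloor n/p\rfloor$, I introduce the $p$-grid indices
\[
k_0 := \lfloor j/p\rfloor, \qquad k_1 := k_0 + m + 1, \qquad k_2 := k_0 + q = k_1 + q - 1 - m,
\]
and check by a direct computation that $k_1 p - j \ge mp + 1$ and $k_2 p \le j + n$. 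Thus the pivot $k_1 p$ is a $p$-grid time strictly past $j$ by at least $mp+1$ units, while $k_2 p = k_1 p + (q-1-m)p$ still lies within the future window $[j+n,\infty)$.

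By a standard Markov extension on an enlarged probability space, I may assume that the $p$-grid skeleton of $X$ is realised by iterating $f^p$ driven by the $(\eps_\ell)$, so that $X_{tp} = Z^{X_{k_1 p}}_{k_1 p,\, tp}$ for every $t \ge k_1$, and that the intermediate values $X_{tp+r}$ with $0 < r < p$ are produced from $X_{tp}$, the environment, and an auxiliary i.i.d.\ noise $(\eta_i)$ independent of $(X_0, Y, \eps)$. Let $\tilde X$ denote the parallel trajectory started at $x_0$ at time $k_1 p$ and driven by the same $(Y_\ell, \eps_\ell, \eta_\ell)$. The coupling hypothesis \eqref{eq:coupling_cond} gives $\P(\Omega_{\mathrm{coup}}^c) \le b(q-1-m)$ for
\[
\Omega_{\mathrm{coup}} := \bigl\{Z^{X_{k_1 p}}_{k_1 p,\, k_2 p} = Z^{x_0}_{k_1 p,\, k_2 p}\bigr\},
\]
and sharing the drivers forces $X_i = \tilde X_i$ for all $i \ge k_2 p$ on $\Omega_{\mathrm{coup}}$; in particular this holds for every $i \ge j + n$.

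Let $A_H$ denote the Borel set defining $H$, and set $H' := \{(\tilde X_i, Y_i)_{i \ge j+n} \in A_H\}$, so that $\ind_H - \ind_{H'}$ is supported in $\Omega_{\mathrm{coup}}^c$. The algebraic identity
\[
\P(G \cap H) - \P(G)\P(H) = \bigl[\P(G \cap H') - \P(G)\P(H')\bigr] + \E\bigl[(\ind_G - \P(G))(\ind_H - \ind_{H'})\bigr]
\]
bounds the second summand in modulus by $\P(\Omega_{\mathrm{coup}}^c) \le b(q-1-m)$, since the integrand is dominated by $\ind_{\Omega_{\mathrm{coup}}^c}$; this is what yields the single-$b$ coefficient rather than the $2b$ that a naive triangle inequality would produce. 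For the first summand, $H'$ is measurable with respect to $\sigma\bigl((Y_\ell)_{\ell \ge k_1 p}\bigr) \vee \sigma(\eps_\ell, \eta_\ell : \ell > k_1 p)$, while $G$ lies in $\sigma\bigl(X_0, (Y_\ell)_{\ell \le j}\bigr) \vee \sigma(\eps_\ell, \eta_\ell : \ell \le j)$; the two noise blocks are disjoint sub-families of i.i.d.\ variables and are jointly independent of $(X_0, Y)$. Conditioning on both noise blocks via Fubini and then on $X_0$ (using $X_0 \perp (Y_n)_{n\in\N}$), the resulting inner events are product sets in $\F^Y_{-\infty,j} \otimes \F^Y_{k_1 p,\infty}$, so $\alpha^Y$-mixing yields $\lvert\P(G \cap H') - \P(G)\P(H')\rvert \le \alpha^Y(k_1 p - j) \le \alpha^Y(mp)$ by monotonicity.

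Combining the two bounds and taking suprema over $G$, $H$, and $j$ produces \eqref{eq:trans_mix}. The most delicate point is the redrawing argument of the second paragraph, which re-expresses the future of $X$ as a deterministic function of $X_{k_1 p}$, the future environment, and the independent noise $(\eps,\eta)$; once that representation is available, the algebraic identity handles the coupling error with the correct constant and the Fubini/product-set reduction disposes of the mixing term.
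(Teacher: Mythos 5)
Your architecture coincides with the paper's: the same covariance decomposition $\cov(\ind_G,\ind_H)=\cov(\ind_G,\ind_{H'})+\E[(\ind_G-\P(G))(\ind_H-\ind_{H'})]$ with $H'$ the copy of $H$ restarted from $x_0$ at a grid pivot, the same single-$b$ bound obtained by centering $\ind_G$ so that the integrand is dominated by the indicator of the non-coupling event, and the same reduction of the first term to $\alpha^{Y}(mp)$ by conditioning out the independent noise. The index bookkeeping ($k_1p-j\ge mp+1$, $k_2-k_1=\lfloor n/p\rfloor-1-m$, $k_2p\le j+n$) is also correct and matches the paper's choice $k''=k+m$.

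The one step that would fail as written is your construction of the version of $(X_i)_{i\ge j+n}$ at non-grid times. You generate $X_{tp+r}$, $0<r<p$, from $X_{tp}$, the environment and an auxiliary noise $\eta$ independent of $\eps$, while $X_{(t+1)p}=f^p(X_{tp},Y^p_t,\eps_{(t+1)p})$. This makes the interior of each block conditionally independent of its right endpoint given $(X_{tp},Y)$, which is not the law of the chain: in the true process $X_{(t+1)p}$ has conditional law $Q(Y_{(t+1)p-1},X_{(t+1)p-1},\cdot)$ given the preceding value, so endpoint and interior are dependent. Since $H$ ranges over all of $\F^{X,Y}_{j+n,\infty}$ and $j+n$ need not be a multiple of $p$, events involving consecutive times that straddle a grid point detect this discrepancy; the probabilities you compute are then those of a different process and do not bound $\alpha^{X,Y}(n)$. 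Two repairs are available: generate the block interiors as bridges conditioned on \emph{both} grid endpoints (a disintegration on standard Borel spaces, after which all your measurability and coupling-propagation claims survive, because two trajectories sharing grid values and sharing $\eta$ also share interiors), or do what the paper does --- use the $f^p/\eps$ representation only for grid times inside $[j,j+n]$ and continue past the last grid point $k'p\le j+n$ with a one-step functional representation $f$ of $Q$ driven by the same noise, so that no block ever needs both an $f^p$-generated endpoint and an independently generated interior.
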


	\begin{proof}
		To maintain readability, the rather technical proof is deferred to Appendix~\ref{ap:proof_of_coupl_trans_mix}.
	\end{proof}
	 
	The extension of the Foster-Lyapunov theory of general state space Markov chains equips us with the necessary analytical tools for establishing the transition of $\alpha$-mixing properties from the environment to the process by verifying that the conditions of Lemma \ref{lem:coupl_trans_mix} are met.
	
	We say that \( Q \) satisfies the \textit{drift (or Lyapunov) condition} if there exists a measurable mapping \( V:\X \to [0,\infty) \) (referred to as a Lyapunov function) and measurable functions \( \gamma, K:\Y \to (0,\infty) \), such that for all \( (y, x) \in \Y \times \X \),
	\begin{equation}\label{eq:drift}
	[Q(y)V](x) \leq \gamma(y)V(x) + K(y).
	\end{equation}
	
	The kernel \( Q \) satisfies the \textit{minorization condition} with parameter \( R > 0 \) if there exists a probability kernel \( \kappa_R: \Y \times \B(\X) \to [0,1] \) and a measurable function \( \beta:[0,\infty) \times \Y \to [0,1) \) such that for all \( (y, x, A) \in \Y \times \stackrel{-1}{V} ([0,R]) \times \B(\X) \),
	\begin{equation}\label{eq:smallset}
	Q(y, x, A) \geq (1 - \beta(R, y))\kappa_R(y, A).
	\end{equation}
	The minorization condition guarantees the existence of ``small sets.'' Hence, it is also known as a ``small set''-type condition.
	
	\medskip 
	If $\ga,K$ are independent of $y$ and $\ga<1$ then \eqref{eq:drift} is the standard drift condition for geometrically ergodic Markov chains, see Chapter 15 of \cite{mt}. In \cite{lovas}, authors studied the ergodic properties of MCREs when the environment $(Y_n)_{n\in\Z}$ is stationary and $\gamma(y) \geq 1$ may occur.  
	In their analysis, alongside with some minor technical conditions, the following \emph{long-term contractivity} assumption played a crucial role:
	\begin{equation}\label{eq:LT}
		\limsup_{n\to\infty}\E^{1/n}\left(K(Y_0)\prod_{k=1}^{n}\gamma (Y_k)\right) < 1.
	\end{equation}
	
	To the best of our knowledge, the strongest results on the existence, stability, and ergodicity of stationary solutions for Markov chains in random environments, when the environment $(Y_n)_{n\in\Z}$ is itself stationary, are presented in \cite{Truquet1}.	
	Under the assumption that $\E \left[\log (\ga (Y_0))_+\right]+\E \left[\log (K (Y_0))_+\right]<\infty$ and
	\begin{equation}\label{eq:Truq}
		\limsup_{n\to\infty}\prod_{k=1}^{n}\ga (Y_{-k})^{1/n}<1,\,\,\P-\Pas,
	\end{equation}
	which is notably weaker than \eqref{eq:LT}, in \cite{Truquet1} Truquet proved that there exists a stationary process $((Y_n, X_n^\ast))_{n\in\Z}$, where $(X_n^\ast)_{n\in\Z}$ is a MCRE, and the distribution of $((Y_n, X_n^\ast))_{n\in\Z}$ is unique.
	
	If, in addition, the environment $(Y_n)_{n\in\Z}$ is ergodic, the process $((Y_n,X_n^\ast))_{t\in\Z}$ is also ergodic. As a result, the strong law of large numbers holds for any measurable function $\Phi:\Y\times\X\to\R$ with $\E (|\Phi (Y_0,X_0^\ast)|)<\infty$ i.e.
	$$
	\frac{1}{n}\sum_{k=1}^{n} \Phi (Y_k, X_k^\ast)\to \E (\Phi (Y_0,X_0^\ast)),\,\,\text{as}
	\,\,n\to\infty,\,\,\P-\Pas
	$$
	In this case, the condition \eqref{eq:Truq} boils down to $\E \left[\log (\ga (Y_0))\right]<0$.

	\medskip 
	The very recent work \cite{lovas2024transition} establishes the transition of $\alpha$-mixing properties from the potentially non-stationary environment $(Y_n)_{n\in\N}$ to the process $(X_n)_{n\in\N}$ under the following assumptions:
	\begin{enumerate}[i.]
		\item Beyond the integrability condition $\E \left(K(Y_j)\prod_{k=1}^{n}\ga(Y_{k+j})\right)<\infty$, for all $j\in\N$, $n\ge 1$, the following uniform version of the long-term contractivity condition holds:
		\[
		\bar{\ga}:=\limsup_{n\to\infty}\sup_{j\ge -1} \E^{1/n}\left[K(Y_j)\prod_{k=1}^{n}\ga(Y_{k+j})\right]<1,
		\]
		where $K(Y_{-1}):=1$.
		
		\item For some $0<r<1/\bar{\ga}-1$, the parametric kernel $Q$ satisfies the minorization condition with
		\[
		R(y)=\frac{2K(y)}{r\ga(y)},\quad \text{and}\quad \bar{\beta}:=\sup_{y\in\Y}\beta\left(R(y),y\right)<1.
		\]
	\end{enumerate}
	In concrete models, such as single-server queuing systems, for example, condition ii.\ is typically satisfied via uniform minorization, meaning that the parametric kernel $Q$ satisfies the minorization condition for every $R>0$ with a constant $\bar{\beta}$ which may depend on $R$, but is independent of $y$.
	
	Under condition i., there exist constants $L>0$ and $\chi\in(0,1)$ such that for all $j\in\N$,
    \begin{equation}\label{contract}
	\E\left[K(Y_j)\prod_{k=1}^{n}\ga(Y_{k+j})\right] \leq L\chi^n.
	    \end{equation}
	However, requiring (\ref{contract}) would impose a too strong restriction, since condition i.\ does not follow from the $\alpha$-mixing property of the environment process $(Y_n)_{n\in\N}$ (see the counterexample presented in Appendix B of \cite{lovas2024transition}).
However, we show in this paper that such a long-term contractivity condition can be valid for our models for $\phi$ or $\psi-$mixing environments. See Lemma \ref{lem:ga} for details.

	The aim is to investigate the general setting where the environment $(Y_n)_{n\in\N}$ is not necessarily stationary, minorization is not uniform, and a long-term contractivity condition with a possibly sub-geometric convergence rate holds.
       Our working assumptions are as follows:
	\begin{assumption}\label{as:main_assumptions}
	We assume that the parametric kernel $Q$ satisfies the drift \eqref{eq:drift} condition and the minorization
	condition \eqref{eq:smallset}, with every $R>0$, such that the following hold:
	\begin{itemize}
		\item[\bf A1] 
        We have
        $$r_0:=\sum_{\ell\geq 0}d_{\ell}<\infty,\mbox{ with } d_0=\sup_{t\geq 0}\E\left[K(Y_t)+\gamma(Y_t)\right]<\infty \mbox{ and }$$
        $$d_{\ell}:=\sup_{t\geq -1}\E\left[K(Y_t)\prod_{i=1}^{\ell}\gamma(Y_{t+i})\right],\quad \ell\geq 1.$$
        		
		\item[\bf A2] For any $R>0$, $\lim_{\overline{\beta}\uparrow 1}\sup_{t\in\N}\P\left(\beta(R,Y_t)>\overline{\beta}\right)=0$.
	\end{itemize}
	\end{assumption}

\paragraph{Note.} When the process $(Y_t)_{t\in\Z}$ is stationary, we simply have $d_{\ell}=\E\left[K(Y_0)\gamma(Y_1)\cdots \gamma(Y_{\ell})\right]$ and Assumption {\bf A2} holds true as soon as $\P\left(\beta(R,Y_0)=1\right)=0$ for any $R>0$.

    The following lemma shows that one can also use the drift function $V^{\delta}$ instead of $V$ for any $\delta\in (0,1)$. Choosing $\delta$ smaller than $1$ can be useful for checking Assumption \ref{as:main_assumptions} when the sequence $\left(d_{\ell}\right)_{\ell\geq 1}$ is not summable or for getting better rates of convergence for the new sequence. 
    See for instance Lemma \ref{products} for some specific cases.
    For simplicity, we only consider the drift function $V$ in our statements, the choice of a specific power will be a task related to specific examples.
	\begin{lemma}\label{lem:V^delta}
		Assume that the parametric kernel $Q:\Y\times\X\times\B (\X)\to [0,1]$ satisfies the drift condition \eqref{eq:drift} with $\ga, K:\Y\to (0,\infty)$. Then for any $\delta\in (0,1)$,
		$$
		[Q(y)V^\delta](x)\le \ga (y)^\delta V(x)^\delta + K(y)^\delta.
		$$
	\end{lemma}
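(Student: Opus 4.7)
The plan is to combine two elementary facts about the function $t\mapsto t^{\delta}$ on $[0,\infty)$ for $\delta\in(0,1)$: its concavity and its subadditivity $(a+b)^{\delta}\le a^{\delta}+b^{\delta}$ for $a,b\ge 0$.

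First I would apply Jensen's inequality to the probability measure $Q(y,x,\cdot)$ using concavity of $t\mapsto t^{\delta}$ on $[0,\infty)$. This yields
\[
[Q(y)V^{\delta}](x)=\int_{\X} V(z)^{\delta}\, Q(y,x,\dint z)\le \left(\int_{\X} V(z)\, Q(y,x,\dint z)\right)^{\delta}=\bigl([Q(y)V](x)\bigr)^{\delta}.
\]
Next, since $t\mapsto t^{\delta}$ is non-decreasing on $[0,\infty)$, the drift hypothesis \eqref{eq:drift} gives
\[
\bigl([Q(y)V](x)\bigr)^{\delta}\le \bigl(\gamma(y)V(x)+K(y)\bigr)^{\delta}.
\]

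Finally I would invoke the subadditivity inequality $(a+b)^{\delta}\le a^{\delta}+b^{\delta}$, valid for all $a,b\ge 0$ and $\delta\in(0,1)$ (a standard consequence of the concavity of $t\mapsto t^{\delta}$ together with $0^{\delta}=0$), applied with $a=\gamma(y)V(x)$ and $b=K(y)$. Combining the three displays yields
\[
[Q(y)V^{\delta}](x)\le \gamma(y)^{\delta}V(x)^{\delta}+K(y)^{\delta},
\]
which is the claim. There is no real obstacle: the proof is two lines, and the only minor point to note is that $\gamma(y),K(y)>0$ and $V\ge 0$, so all quantities raised to the power $\delta$ are well-defined (with the convention $0^{\delta}=0$).
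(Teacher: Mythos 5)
Your proof is correct and follows exactly the same route as the paper: Jensen's inequality for the concave map $t\mapsto t^{\delta}$ applied to the probability measure $Q(y,x,\cdot)$, then monotonicity to insert the drift bound, then subadditivity of $t\mapsto t^{\delta}$ to split the sum. Nothing to add.
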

	\begin{proof}
		For $\delta \in (0,1)$, the function $x \mapsto x^\delta$, $x \geq 0$ is concave, monotonically increasing, and also subadditive. Hence, by Jensen's inequality, we immediately obtain  
		\begin{equation*}
			[Q(y)V^\delta](x) \leq \left([Q(y)V](x)\right)^\delta
			\le
			(\gamma (y)V(x) + K(y))^\delta
			\le  \gamma (y)^\delta V(x)^\delta + K(y)^\delta.
		\end{equation*}   
	\end{proof}

We next show that Assumption \ref{as:main_assumptions} guarantees integrability of $V$ in the annealed case. 
\begin{lemma}\label{mom}
Suppose that Assumption \ref{as:main_assumptions} holds true. Suppose furthermore that either $X_0$ is independent from $(Y_t)_{t\in\Z}$
with $\E\left[V(X_0)\right]<\infty$ or $\left((X_t,Y_t)\right)_{t\in \Z}$ is a stationary process. Then $\sup_{j\geq 0}\E\left[V(X_j)\right]<\infty$.
\end{lemma}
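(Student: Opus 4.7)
My plan is to iterate the drift condition \eqref{eq:drift} conditionally on the entire environment $\F^Y=\sigma\bigl((Y_t)_{t\in I}\bigr)$, and then bound the resulting expectation using Assumption~\ref{as:main_assumptions}\,{\bf A1}. Combining the MCRE identity $\E\bigl[V(X_{j+1})\,\big|\,X_j,\F^Y\bigr]=[Q(Y_j)V](X_j)$ with \eqref{eq:drift}, a simple induction yields, for any $0\le m\le n$,
\begin{equation*}
\E\bigl[V(X_n)\,\big|\,X_m,\F^Y\bigr]
\le
V(X_m)\prod_{k=m}^{n-1}\gamma(Y_k)
+\sum_{j=m}^{n-1} K(Y_j)\prod_{k=j+1}^{n-1}\gamma(Y_k),\quad \P\text{-a.s.}
\end{equation*}
Taking total expectation and applying {\bf A1}---with the convention $K(Y_{-1})=1$ permitting $t=-1$---the cumulative $K$-term is uniformly bounded by $\sum_{\ell=0}^{n-m-1}d_\ell\le r_0$, and each individual $d_\ell$ is also bounded by $r_0$ since the $d_\ell$ are non-negative and summable.

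In case (i) I would take $m=0$ and use $X_0\perp\F^Y$ to factorise the first term: $\E\bigl[V(X_0)\prod_{k=0}^{n-1}\gamma(Y_k)\bigr]=\E[V(X_0)]\,\E\bigl[\prod_{k=0}^{n-1}\gamma(Y_k)\bigr]\le d_n\,\E[V(X_0)]\le r_0\,\E[V(X_0)]$, where the first inequality is {\bf A1} at $t=-1$. Together with the bound on the sum, this gives $\sup_{n\ge 0}\E[V(X_n)]\le r_0\bigl(1+\E[V(X_0)]\bigr)<\infty$.

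Case (ii) is subtler because $V(X_{-m}^\ast)$ has no a priori finite expectation to factorise against the $\gamma$-product. My plan is to approximate $X_0^\ast$ by chains initialised at a deterministic point in the distant past: for a fixed $x_0\in\X$, let $X_0^{(-n)}$ denote the MCRE driven by the same noise and environment as $X^\ast$ but started at $x_0$ at time $-n$. A time-shifted case~(i) computation, combined with the stationarity of $(Y_t)$ to translate each $d_\ell$, yields $\E\bigl[V(X_0^{(-n)})\bigr]\le V(x_0)\,d_n+r_0\le r_0\bigl(V(x_0)+1\bigr)$ uniformly in $n$. Since the backward coupling construction of \cite{Truquet1} (which underlies the existence of the stationary solution) delivers $X_0^{(-n)}\to X_0^\ast$ almost surely, Fatou's lemma then gives $\E[V(X_0^\ast)]\le r_0\bigl(V(x_0)+1\bigr)<\infty$, and stationarity carries this bound to every time index.

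The principal obstacle is the Fatou step in case (ii): strictly speaking one needs $V$ continuous (or at least lower semicontinuous) so that the almost-sure convergence $V(X_0^{(-n)})\to V(X_0^\ast)$ can be transferred to expectations. This is harmless in the concrete applications to come, where $V$ is typically polynomial or exponential. For a purely measurable $V$ one can instead exploit the successful-coupling regime, in which $X_0^{(-n)}=X_0^\ast$ for all large enough $n$, to reach the same conclusion.
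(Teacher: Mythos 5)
Your case (i) is correct and is essentially the paper's argument: iterate the drift condition conditionally on the environment, use the independence of $X_0$ to factorise, and invoke Assumption~{\bf A1} (with the convention $K(Y_{-1})=1$) to bound both the $\gamma$-product term and the cumulative $K$-term by multiples of $r_0$, giving $\sup_j\E[V(X_j)]\le r_0(1+\E[V(X_0)])$.

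The gap is in the limit passage of case (ii). What the construction in \cite{Truquet1} actually delivers — and what the paper's own proof quotes — is almost-sure convergence of the conditional laws $\delta_{x_0}Q(Y_{-n})\cdots Q(Y_{-1})\to\pi_0$ in \emph{total variation}, where $\pi_0$ is the conditional law of $X_0^\ast$ given the environment. This does not produce almost-sure convergence of the random variables $X_0^{(-n)}\to X_0^\ast$ in the state space, so the premise of your Fatou step is simply not available, with or without continuity of $V$; and your fallback via the ``successful-coupling regime'' ($X_0^{(-n)}=X_0^\ast$ for all large $n$) is essentially the coupling statement that the paper only establishes later (Theorem~\ref{thm:stac_coupling}, which itself uses Lemma~\ref{mom} through the constant $\E[V(X_0^\ast)]$), so invoking it here is circular or at least requires a separate argument. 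The paper closes the gap with a truncation device you do not use: set $V_M=V\wedge M$. Since $V_M$ is bounded, total-variation convergence of the measures immediately gives $\delta_{x_0}Q(Y_{-n})\cdots Q(Y_{-1})V_M\to\pi_0 V_M$ a.s., the constant $M$ serves as a dominating function over the environment, and hence $\E[V_M(X_0^\ast)]=\lim_n\E\bigl[\delta_{x_0}Q(Y_{-n})\cdots Q(Y_{-1})V_M\bigr]\le\limsup_n\bigl(V(x_0)d_n+r_0\bigr)=r_0$, using $d_n\to0$. Letting $M\uparrow\infty$ by monotone convergence yields $\E[V(X_0^\ast)]\le r_0$ with no topological assumption on $V$ and no coupling. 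Your uniform estimate $\E[V(X_0^{(-n)})]\le V(x_0)d_n+r_0$ is the right one; it just needs to be applied to $V_M$ and the limits taken in this order.
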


\paragraph{Proof of Lemma \ref{mom}.} 
To simplify the notations, we denote $K(Y_t)$ or $\gamma(Y_t)$ by $K_t$ or $\gamma_t$ respectively.
Our drift condition yields to
$$\E\left[V(X_j)\vert X_0,Y\right]\leq \left(\prod_{i=0}^{j-1}\gamma_i\right) V(X_0)+K_{j-1}+\sum_{i=0}^{j-2} K_i\gamma_{i+1}\cdots \gamma_{j-1}.$$
When $X_0\equiv x_0$, using the inequality $K\geq 1$, we get 
$$\E\left[V(X_j)\right]\leq d_j V(x_0) +r_0\leq r_0(1+V(x_0))<\infty.$$
Let us now consider the stationary case. From Assumption \ref{as:main_assumptions}, we have $\log d_n< 0$ for $n$ sufficiently large. On the other hand, Jensen's inequality entails
$$\log d_n\geq \E\left[\log K_0\right]+n \E\left[\log \gamma_0\right]\geq n \E\left[\log \gamma_0\right].$$
We deduce that $\E\left[\log \gamma_0\right]<0$. We know that under this condition, there exists a unique stationary solution as well as a sequence of stationary random probability measure $\left(\pi_t\right)_{t\in\Z}$ such that $\E\left[h(X_0)\right]=\E\int h(x) \pi_0(dx)$
and 
$$\pi_0=\lim_{n\rightarrow \infty}\delta_x Q\left(Y_{t-n}\right)\cdots Q\left(Y_{-1}\right)\mbox{ a.s.},$$
where the convergence of measures holds true for to the total variance distance and the limit does not depend on the intial state $x$.

Setting $V_M=V\wedge M$ for some positive constant $M$, we have 
$$\delta_x Q\left(Y_{-n}\right)\cdots Q\left(Y_{-1}\right)V_M\leq \delta_x Q\left(Y_{-n}\right)\cdots Q\left(Y_{-1}\right)V\leq V(x)\prod_{i=-n}^{-1}\gamma_i+K_{-1}+\sum_{i\geq 0}K_{-i}\gamma_{-i+1}\cdots \gamma_{-1}.$$
By taking the almost sure limit and using the Lebesgue theorem, we get 
$$\E\left[V_M(X_0)\right]=\E\left[\pi_0 V_M\right]\leq r_0<\infty.$$
Letting $M\nearrow \infty$ and using the monotone convergence theorem, we deduce that $\E\left[V(X_0)\right\vert]<\infty$ which concludes the proof.$\square$

\begin{corollary}\label{cor:moment_bound}
   Let \(\Phi: \mathcal{X} \to \mathbb{R}\) be a measurable function. Most limit theorems for mixing processes (e.g., law of large numbers, central limit theorems, etc.) require that the process \((\Phi(X_n))_{n \in \mathbb{N}}\) has uniformly bounded moments of sufficiently high order. Lemma~\ref{mom} provides an easily verifiable condition to ensure this. Suppose that for some constant \(C > 0\) and exponent \(p > 1\), we have
\[
|\Phi(x)|^p \le C (1 + V(x)).
\]
Then, by Lemma~\ref{mom}, it immediately follows that
\[
\sup_{j \ge 0} \|\Phi(X_j)\|_p < \infty.
\]
\end{corollary}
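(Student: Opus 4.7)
The statement is essentially an immediate consequence of Lemma~\ref{mom}, so my plan is a short two-step calculation rather than a genuine argument. First, I would apply the pointwise hypothesis $|\Phi(x)|^p \le C(1+V(x))$ at the random point $X_j$ and take expectations, yielding
\[
\E\bigl[|\Phi(X_j)|^p\bigr] \le C + C\,\E\bigl[V(X_j)\bigr]
\]
for every $j \ge 0$. Then I would take the supremum over $j$ on both sides and invoke Lemma~\ref{mom}, which guarantees $M := \sup_{j\ge 0}\E[V(X_j)] < \infty$ under Assumption~\ref{as:main_assumptions} together with either the annealed initial condition $\E[V(X_0)]<\infty$ or the stationarity of $((X_t,Y_t))_{t\in\Z}$.

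From there, raising to the power $1/p$ gives
\[
\sup_{j\ge 0} \|\Phi(X_j)\|_p \le \bigl(C(1+M)\bigr)^{1/p} < \infty,
\]
which is the desired conclusion. There is no real obstacle here: the only thing to be careful about is that the hypotheses of Lemma~\ref{mom} are assumed to be in force (so that either $X_0$ is independent of the environment with $\E[V(X_0)]<\infty$, or $((X_t,Y_t))_{t\in\Z}$ is stationary), otherwise the uniform moment bound on $V(X_j)$ need not hold. Since the corollary is stated in the same framework as Lemma~\ref{mom}, this is automatic.
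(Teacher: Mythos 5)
Your proposal is correct and matches the paper's treatment: the corollary is stated as an immediate consequence of Lemma~\ref{mom}, obtained exactly as you describe by taking expectations in the pointwise bound $|\Phi(x)|^p \le C(1+V(x))$ and invoking $\sup_{j\ge 0}\E[V(X_j)]<\infty$. Your added remark that the hypotheses of Lemma~\ref{mom} must be in force is a sensible clarification but does not change the argument.
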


    We now state the main result of the paper. 
    
    \begin{theorem}\label{thm:A12fromMixing}
	 Suppose that Assumption \ref{as:main_assumptions} holds true and that $Y$ is $\alpha-$mixing. Set $r_i=\sum_{\ell\geq i}d_{\ell}$ for some positive integer $i$. Then there exist a function $f$ in Lemma \ref{lem:coupl_trans_mix}, $\kappa\in (0,1)$ and a positive constant $c$ such that 
     \begin{equation}\label{boundmain}
     b(n)\leq c \inf_{1\leq i\leq q\leq n}\left\{r_i+\kappa^{n/q}+\alpha^Y(q+1-i)\right\}.
     \end{equation}
    \end{theorem}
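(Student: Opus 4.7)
The plan is to build the representation $f^p$ of Lemma~\ref{lem:coupl_trans_mix} through a Nummelin-type splitting of the minorization \eqref{eq:smallset}, and then to estimate the meeting probability $b(n)$ by decomposing the horizon into blocks whose failure probabilities match the three terms in \eqref{boundmain}: a drift warmup contribution of size $r_i$, a per-block geometric coupling success producing $\kappa^{n/q}$, and a single $\alpha$-mixing substitution step with cost $\alpha^Y(q+1-i)$.

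First I would construct $f^p$. Fix $R>0$ and $\bar\beta\in(0,1)$, to be chosen in terms of $R$ through Assumption~\textbf{A2}. The decomposition
\begin{equation*}
Q(y,x,\cdot) = (1-\beta(R,y))\kappa_R(y,\cdot) + \beta(R,y)\tilde Q_R(y,x,\cdot),\qquad V(x)\leq R,
\end{equation*}
combined with the $[0,1]$-valued innovation, yields a measurable $f:\X\times\Y\times[0,1]\to\X$ such that two copies $x,x'$ with $V(x)\vee V(x')\leq R$ satisfy $f(x,y,\eps)=f(x',y,\eps)$ on an event of probability at least $1-\beta(R,y)$; iterating $p$ such single steps gives $f^p$. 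Once the two trajectories $Z^{X_{jp}}_{jp,\cdot}$ and $Z^{x_0}_{jp,\cdot}$ coincide at one time, they coincide forever, and $b(n)$ equals the probability that no successful merging has occurred by time $n$.

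Next I would fix $1\leq i\leq q\leq n$ and partition the $n$ available $p$-blocks into $\lfloor n/q\rfloor$ phases of length $q$, each comprised of a drift sub-block of length $q-i+1$ followed by an attempt sub-block of length $i-1$, so that consecutive attempt windows are separated by a $Y$-gap of length exactly $q+1-i$. Iterating \eqref{eq:drift} along a drift sub-block and using Assumption~\textbf{A1} together with Markov's inequality, one bounds the probability that either $Z^{X_{jp}}$ or $Z^{x_0}$ lies outside $V^{-1}([0,R])$ at the start of an attempt window by $C\,r_i$, uniformly over phases: the tail $r_i=\sum_{\ell\geq i}d_\ell$ arises exactly as the cumulative expected drift contribution after $i$ steps of warmup. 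On the good event, Assumption~\textbf{A2} permits choosing $R$ and $\bar\beta$ so that $\sup_t\P(\beta(R,Y_t)>\bar\beta)<1-\bar\beta$, giving a uniform per-attempt merging probability bounded below by some $1-\kappa$ with $\kappa\in(0,1)$, conditionally on $Y$. Finally, invoking the $\alpha$-mixing of $Y$ on the disjoint attempt $Y$-windows separated by gaps $q+1-i$, one replaces the $Y$-dependent sequence of $\lfloor n/q\rfloor$ attempts by an approximately independent product, the aggregated discrepancy being of order $\alpha^Y(q+1-i)$ after absorbing the polynomial factor into the geometric rate by slightly enlarging $\kappa$. Summing the three contributions and optimizing over $(i,q)$ yields \eqref{boundmain}.

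The principal technical obstacle is implementing the $\alpha$-mixing substitution so that the total cost is $O(\alpha^Y(q+1-i))$ rather than $(n/q)\cdot\alpha^Y(q+1-i)$, which requires either a careful block coupling of $Y$ with an independent copy followed by a telescoping argument, or an inductive extraction of conditional expectations across separated windows. A secondary subtlety is that \textbf{A2} only provides pointwise smallness of $\P(\beta(R,Y_t)>\bar\beta)$ rather than an almost-sure bound on $\beta(R,Y_t)$; the exceptional event $\{\beta(R,Y_t)>\bar\beta\}$ must be absorbed either into the coupling-failure tally (at the cost of enlarging $\kappa$) or, jointly with the drift tail, into the $r_i$ term through the uniformity of $\sup_t$ in \textbf{A1}.
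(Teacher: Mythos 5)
Your overall architecture (Nummelin splitting to build $f$, coupling attempts on returns to a small set, drift control for excursions, a mixing argument on the environment) matches the paper's, and you correctly anticipate that $r_i$ must come from cutting dependence down to $i$ coordinates of $Y$. However, there is a genuine gap at the heart of the argument: your deterministic schedule of $\lfloor n/q\rfloor$ phases implicitly requires each attempt window to be individually ``good'' (both chains inside $V^{-1}([0,R])$, $\beta(R,Y_t)\le\bar\beta$, drift products controlled), and each window to be individually decoupled from the past. Carried out as described, this accumulates a factor $n/q$ on both error terms: a union bound over phases turns your per-phase cost $Cr_i$ into $(n/q)r_i$, and the telescoping block-coupling you propose for the mixing substitution costs $(n/q)\,\alpha^Y(q+1-i)$, not $\alpha^Y(q+1-i)$. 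Moreover, Assumption {\bf A2} only guarantees that a fixed attempt time is good with some probability $p_0>0$ bounded away from zero (this is Lemma \ref{lowb}), not with probability close to one, so requiring all attempts to be good is hopeless anyway; and your fallback of absorbing $\{\beta(R,Y_t)>\bar\beta\}$ into $r_i$ cannot work since {\bf A2} provides no rate linking that event to the drift tails.

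The paper resolves both difficulties with a single device that your sketch lacks: it only requires a \emph{positive fraction} of the times in $[j+C,j+n]$ to be good, and bounds the lower deviation of the \emph{count} $\widetilde L_{n,j}$ of good times. Concretely, the events defining a good time are first truncated to depend on only $i+1$ consecutive coordinates of $Y$ (the truncation error, controlled by Markov's inequality applied to the expected number of bad truncations and divided by a threshold of order $n$, is where $O(r_i)$ appears without the extra $n/q$ factor), and then Rio's exponential inequality for bounded strongly mixing sequences (Proposition \ref{exp1}, in the form \eqref{sufexp1}) is applied to the centered truncated indicators, whose mixing coefficients satisfy $\alpha_U(m)\le\alpha^Y(m-i)$. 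That inequality internalizes the blocking and delivers $\exp(-c\,n/q)+\alpha^Y(q+1-i)$ with a single mixing term. The quenched coupling bound $M(1+V(x_1)+V(x_2))\rho^{L_{n,j}}$ of Lemma \ref{lem:tau_chain} is proved separately, by running the two chains along the random return times and using a geometric-trials argument on visits to $\{\|\overline Z_i\|\le R_C\}$. Without the count-deviation step (Lemma \ref{conc}, point 1) your proof does not close, and the two mechanisms you offer for the mixing substitution either lose the factor $n/q$ or are not specified enough to verify.
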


We immediately deduce a bound for strong mixing coefficients. For clarity of the presentation, we state a separate result.

\begin{corollary}\label{cor:A12fromMixing}
 Suppose that Assumption \ref{as:main_assumptions} holds true and that $Y$ is $\alpha-$mixing. Set $r_i=\sum_{\ell\geq i}d_{\ell}$ for some positive integer $i$. Then there exist $\kappa\in (0,1)$ and a positive constant $c$ such that 
     \begin{equation}\label{boundmain}
     \alpha^{X,Y}(n)\leq c \inf_{1\leq i\leq q\leq n}\left\{r_i+\kappa^{n/q}+\alpha^Y(q+1-i)\right\}.
     \end{equation}

\end{corollary}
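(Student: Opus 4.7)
The plan is to deduce the bound on $\alpha^{X,Y}(n)$ by combining the estimate on the coupling quantity $b(n)$ from Theorem \ref{thm:A12fromMixing} with the transfer-of-mixing inequality of Lemma \ref{lem:coupl_trans_mix}. Theorem \ref{thm:A12fromMixing} asserts the existence of a representation function $f$ (i.e., $f^p$ for some $p\ge 1$) for which the coupling hypothesis of Lemma \ref{lem:coupl_trans_mix} holds, together with constants $c_0>0$ and $\kappa\in(0,1)$ such that
$$b(N)\le c_0\inf_{1\le i\le q\le N}\bigl\{r_i+\kappa^{N/q}+\alpha^Y(q+1-i)\bigr\}.$$
Applying Lemma \ref{lem:coupl_trans_mix} with this same $f^p$ then yields, for $n\ge p$ and any $0\le m<\lfloor n/p\rfloor$,
$$\alpha^{X,Y}(n)\le \alpha^Y(mp)+b\bigl(\lfloor n/p\rfloor-1-m\bigr).$$

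Next, I would optimize the free parameter $m$ against the target infimum indices $(i,q)$. Fixing $(i,q)$ with $1\le i\le q\le n$, the natural choice is $m:=\lceil (q+1-i)/p\rceil$, which guarantees $mp\ge q+1-i$ and hence, by monotonicity of $\alpha^Y$, $\alpha^Y(mp)\le \alpha^Y(q+1-i)$. Substituting into the bound on $b$ at the rescaled pair $\bigl(i,\,q'\bigr)$ with $q':=\lceil q/p\rceil$, each of the three terms of the desired infimum appears up to constants depending only on $p$: the $r_i$ term is preserved verbatim, the $\alpha^Y(q'+1-i)$ piece is dominated by $\alpha^Y(q+1-i)$ by monotonicity, and the geometric term $\kappa^{N/q'}$ with $N=\lfloor n/p\rfloor-1-m$ is bounded above by $\bigl(\kappa^{1/(2p)}\bigr)^{n/q}$ for $n$ sufficiently large. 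Replacing $\kappa$ by $\kappa^{1/(2p)}$ and absorbing the $p$-dependent prefactors into a new constant $c$ produces the stated form of \eqref{boundmain}.

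The main obstacle is purely bookkeeping: one must verify that the admissibility window $1\le i\le q\le N$ in the theorem is respected after the rescaling by $p$ (which forces $n$ to be at least of order $p(q-i)$), and handle the excluded small values of $n$ by the trivial bound $\alpha^{X,Y}(n)\le 1$ at the cost of enlarging $c$. No genuinely new probabilistic argument is required beyond those already contained in the proofs of Lemma \ref{lem:coupl_trans_mix} and Theorem \ref{thm:A12fromMixing}; the only subtlety lies in checking that the optimization over $m$ can be carried out uniformly in the pair $(i,q)$ so that the final infimum is indeed indexed by $1\le i\le q\le n$ rather than by a $p$-dependent subset.
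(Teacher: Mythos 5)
Your proposal is correct and follows essentially the same route as the paper: the paper's proof also just combines the bound on $b(\cdot)$ from Theorem \ref{thm:A12fromMixing} with the transfer inequality of Lemma \ref{lem:coupl_trans_mix} (with $p=1$, since the coupling in Theorem \ref{thm:A12fromMixing} is built from the one-step representation) and then absorbs boundary cases into the constant. The only difference is that the paper uses the single fixed choice $m=\lfloor n/2\rfloor$ and exploits monotonicity of $\alpha^Y$, whereas you choose $m$ adaptively in $(i,q)$; both choices are legitimate because the inequality of Lemma \ref{lem:coupl_trans_mix} holds for every admissible $m$, so this is an immaterial variation.
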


	\paragraph{Notes}
    \begin{enumerate}
        \item
    From a suitable choice of the pair $(i,q)$, Corollary \ref{cor:A12fromMixing} ensures that the pair $(X,Y)$ is strongly mixing. The decays of the mixing coefficients depends in particular on that of the sequence $(r_i)_{i>0}$. Getting such a rate is non-trivial and depends on some assumptions on the drift parameters. Many possible rates are given in Lemma \ref{conc} in the Appendix. In particular, under the long-term contractivity assumption (e.g. when $\sup_{y\in \mathcal{Y}}\gamma(y)<1$), the following transfer mixing properties are valid. If $\alpha^Y(n)=O\left(\rho^n\right)$ for some $\rho\in (0,1)$, then the choice $i=[q/2]$ and $q\sim \sqrt{n}$ yield to $\alpha^{X,Y}(n)=O\left(\overline{\rho}^{\sqrt{n}}\right)$ for some $\overline{\rho}\in (0,1)$. On the other hand, if $\alpha^Y(n)=O\left(n^{-a}\right)$, then choosing $i=[q/2]$ and $q\sim c n/\log n$ yield to $\alpha^{X,Y}(n)=O\left(n^{-a}\log^a n\right)$.
    \item 
    Note that in the case where $\sup_{y\in \mathcal{Y}}\gamma(y)<1$, $\sup_{y\in \mathcal{Y}}K(y)<\infty$ and $\inf_{y\in \mathcal{Y}}\beta(R,y)>0$, it is possible to obtain a better rate for $b(n)$, typically an exponential rate. See for instance \cite{lovasCLT} for stochastic gradient Langevin dynamics but the principle is more general since some standard coupling methods such as that introduced in \cite{rosenthal1995minorization} can be extended to such time-inhomogeneous Markov chains. In contrast, from the previous point, we observe that the bound given in Theorem \ref{thm:A12fromMixing} cannot give a better rate than $\beta^{\sqrt{n}}$ with $\beta\in (0,1)$, even when the environment is geometrically mixing. The reason is that our assumptions are very general and allow non-uniform drift or minorization conditions. But inspecting the proof of Theorem \ref{thm:A12fromMixing} shows that some simplifications can be made. In particular, the number of random time points $L_{n,j}$ given in Lemma \ref{lem:tau_chain}, which counts the number of return times of the process $(Y_t)_t$ in a suitable stability region, automatically equals to $n$ in this case. Then one can also recover the exponential rate.
\end{enumerate}
\subsection{Forward coupling with the stationary solution}\label{sec:stac_environment}

Let $(Y_n)_{n\in\Z}$ be a strongly stationary random environment.  
In the proof of Lemma~\ref{mom}, we established that under Assumption~\ref{as:main_assumptions},  
the conditions of Theorem~1 in \cite{Truquet1} are satisfied.  
Consequently, there exists a stationary solution $(X_n^\ast,Y_n)_{n\in\N}$ whose distribution is unique, and moreover  
\(\E [V(X_0^\ast)] < \infty\).

We now show that under Assumption~\ref{as:main_assumptions},  
if $(X_n)_{n\in\N}$ is an MCRE starting from an arbitrary initial value \(X_0\) independent of \((Y_n)_{n\in\N}\) and satisfying \(\E[V(X_0)]<\infty\),  
then $(X_n)_{n\in\N}$ and $(X_n^\ast)_{n\in\N}$ admit versions of the form \eqref{recur} which are \emph{forward coupled}.  
That is, there exists an almost surely finite random time \(\tau\) such that  
\[
Z_{0,n}^{X_0} = Z_{0,n}^{X_0^\ast}, \quad n \ge \tau,
\]
where \(Z_{0,n}^{X_0}\) and \(Z_{0,n}^{X_0^\ast}\), \(n \ge 0\), denote the respective versions of  
\((X_n)_{n\in\N}\) and \((X_n^\ast)_{n\in\N}\) in question.
This stronger notion of stability, introduced by Györfi and Morvai~\cite{gyorfi2002} following Lindvall~\cite{lindvall2002lectures},  
has proved particularly useful in the analysis of queueing systems (See e.g \cite{gyorfi2002} and Section 3 in \cite{lovas2024transition}).

\begin{theorem}\label{thm:stac_coupling}
Suppose that Assumption~\ref{as:main_assumptions} holds, and that $(Y_n)_{n\in\Z}$ is strongly stationary and $\alpha$-mixing.  
Let $(X_n^*, Y_n)_{n\in\Z}$ denote the associated stationary process, and let $X_0$ be a random initial state independent of $(Y_n)_{n\in\Z}$.  
Then there exist versions of $(X_n)_{n\in\N}$ and $(X_n^*)_{n\in\N}$ that are forward coupled.  
Moreover, the tail probability of the coupling time $\tau$ satisfies
\begin{equation}\label{boundmain}
     \P(\tau > n) \le
     c \inf_{1 \le i \le q \le n} \left\{ r_i + \kappa^{n/q} + \alpha^{Y}(q-i) \right\},
\end{equation}
where \( r_i := \sum_{\ell \ge i} d_{\ell} \) for \( i \ge 1 \), and \( c > 0 \), \( \kappa \in (0,1) \) are constants independent of \(n\).
\end{theorem}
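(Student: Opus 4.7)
The plan is to realize both $(X_n)_{n\in\N}$ and $(X_n^\ast)_{n\in\N}$ on the same probability space, driven by a common environment and a common innovation sequence, and then to argue that the bound produced in the proof of Theorem~\ref{thm:A12fromMixing} carries over when the deterministic reference point $x_0$ is replaced by the stationary initial value $X_0^\ast$. Fix $p\ge 1$ and take the map $f^p$ furnished by Lemma~\ref{lem:coupl_trans_mix}, built by a splitting construction so that on each small set $\{V\le R\}$ the output $f^p(\cdot,y,\eps)$ is constant in its first argument with probability at least $1-\beta(R,y)$. Iterating one trajectory from $X_0$ (independent of $(Y_n)_{n\in\Z}$) and another from $X_0^\ast$ with the same blocks $Y_n^p$ and the same innovations $\eps_n$ yields two processes $Z_{0,n}^{X_0}$ and $Z_{0,n}^{X_0^\ast}$. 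Once equality between them occurs, the common-noise recursion trivially propagates it forward, so
\[
\tau:=\inf\bigl\{n\ge 0 \ :\ Z_{0,n}^{X_0}=Z_{0,n}^{X_0^\ast}\bigr\}
\]
is the desired forward-coupling time.

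It then remains to bound $\P(\tau>n)=\P\bigl(Z_{0,n}^{X_0}\neq Z_{0,n}^{X_0^\ast}\bigr)$. The point is that the proof of Theorem~\ref{thm:A12fromMixing} already controls exactly the probability that two trajectories driven by the same noise fail to meet, and it uses the specific initial states only through the drift bound $\E\bigl[V(Z_{0,k}^{x})\bigr]\le V(x)\,d_k+r_0$ coming from \eqref{eq:drift}. Substituting $X_0^\ast$ for the deterministic $x_0$ requires only that $\E[V(X_0^\ast)]<\infty$, which is granted by Lemma~\ref{mom}. The Foster--Lyapunov bookkeeping then produces an inf-bound of the same shape with the same parameters $r_i$, $\kappa$ and $\alpha^Y$, the constant $c$ absorbing the extra factor $\E[V(X_0)]+\E[V(X_0^\ast)]$. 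Almost sure finiteness of $\tau$ follows immediately, for instance by choosing $q=\sqrt{n}$ and $i=\lfloor q/2\rfloor$, since the right-hand side then tends to $0$.

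The main technical obstacle is that $X_0^\ast$ is measurable with respect to $\sigma\bigl((Y_k)_{k<0}\bigr)$, and is therefore correlated with the post-zero environment through the $\alpha$-mixing coefficients. I would handle this by conditioning on the strict past $\F_{-\infty,-1}^Y$, under which $X_0^\ast$ becomes deterministic and the conditioned environment $(Y_n)_{n\ge 0}$ still enjoys the drift/minorization controls of Assumption~\ref{as:main_assumptions} up to an integration at the end. The mixing of $(Y_n)$ is then only invoked across an index gap separating the ``control'' block and the ``coupling'' block; this gap becomes $q-i$ rather than $q+1-i$ because one of the $\alpha^Y$ units is spent on separating $X_0^\ast$ from the future environment, which explains the precise form of the bound stated in the theorem.
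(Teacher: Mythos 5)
Your overall skeleton matches the paper's strategy: realize both chains through the common representation $f$ of Lemma~\ref{lem:T} driven by the same innovations $(\eps_t)_t$ and the same environment, note that equality propagates forward so that $\tau$ is well defined, and reduce the tail bound to $\P\bigl(Z_{0,n}^{X_0}\ne Z_{0,n}^{X_0^*}\bigr)$, using that the initial states enter only through $V$ and that $\E[V(X_0^*)]<\infty$ is supplied by Lemma~\ref{mom}. The gap sits exactly where you place the ``main technical obstacle.'' The paper neither conditions on $\F_{-\infty,-1}^{Y}$ nor spends a mixing coefficient to decouple $X_0^*$ from the post-zero environment. It uses the quenched inequality of Lemma~\ref{lem:tau_chain}: for every \emph{fixed} trajectory $\mathbf{y}$ and every pair of deterministic initial points, $\P\bigl(Z_{0,n}^{x_1,\mathbf{y}}\ne Z_{0,n}^{x_2,\mathbf{y}}\bigr)\le \min\bigl\{1,\,M(1+V(x_1)+V(x_2))\rho^{L_{n,0}}\bigr\}$, where $L_{n,0}$ is a functional of $\mathbf{y}$ alone. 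Splitting this as $\ind_{L_{n,0}\le c_1 n}+M(1+V(x_1)+V(x_2))\rho^{c_1 n}$ decouples the two sources of randomness by pure additivity: the first term does not involve the initial states, and the second has a deterministic decay factor, so integrating against $\law{X_0}\otimes\law{X_0^*\mid \mathbf{Y}=\mathbf{y}}$ and then against $\law{\mathbf{Y}}$ gives $\P(\tau>n)\le \P(L_{n,0}\le c_1 n)+M(1+\E[V(X_0)]+\E[V(X_0^*)])\rho^{c_1 n}$ regardless of any dependence between $X_0^*$ and $\mathbf{Y}$. One then concludes with part~1 of Lemma~\ref{conc}.

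Your proposed alternative is not justified as written. After conditioning on the strict past, the quantities $d_\ell$ of Assumption~{\bf A1} and the exponential deviation inequalities controlling $\widetilde{L}_{n,0}$ (Proposition~\ref{exp1}, applied to the unconditional law of $Y$) become conditional objects that Assumption~\ref{as:main_assumptions} does not control; ``integration at the end'' of an infimum over $(i,q)$ of conditional bounds does not obviously return the stated unconditional bound, and the mixing coefficients of the conditioned environment are not controlled by $\alpha^Y$ in general. Relatedly, your explanation of the exponent $q-i$ (one mixing unit ``spent'' separating $X_0^*$ from the future environment) is not what happens: the argument delivers $\alpha^Y(q+1-i)$ exactly as in Theorem~\ref{thm:A12fromMixing}, and the theorem merely records the harmless weakening $\alpha^Y(q-i)\ge\alpha^Y(q+1-i)$. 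If you replace your third paragraph by the quenched-bound-plus-additive-split argument above, the proof closes.
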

\begin{proof}
The proof follows similar lines as the proof of Theorem \ref{thm:A12fromMixing}. For readability, it is provided in Appendix \ref{ap:stac_coupling:proof}.
\end{proof}

The following corollary to the above theorem yields an explicit and tractable upper bound for the total variation distance between $\law{X_n}$ and $\law{X_n^\ast}$, $n \in \N$.  
This bound is considerably sharper than those obtained in our earlier work~\cite{lovas}.  
A similar result was established in~\cite{lovas2024transition} under substantially stronger assumptions than Assumption~\ref{as:main_assumptions}.
\begin{corollary}\label{cor:fwd_dtv}
	Under the conditions of Theorem~\ref{thm:stac_coupling}, the following rate estimate holds:
	$$
	\lVert \law{X_n} - \law{X_n^\ast} \rVert_{TV}
	\le
    2c \inf_{1\leq i\leq q\leq n}\left\{r_i+\kappa^{n/q}+\alpha^Y(q-i)\right\}
	$$
	with the same constants $c$ and $\kappa$ as in Theorem~\ref{thm:stac_coupling}.
\end{corollary}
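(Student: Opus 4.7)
The plan is to combine the forward coupling constructed in Theorem~\ref{thm:stac_coupling} with the classical coupling inequality; no further probabilistic work is needed beyond what is already proved there. First, I would invoke Theorem~\ref{thm:stac_coupling} to obtain, on a common probability space, versions $\widetilde X_n := Z_{0,n}^{X_0}$ of $X_n$ and $\widetilde X_n^{\ast} := Z_{0,n}^{X_0^{\ast}}$ of $X_n^{\ast}$, together with an almost surely finite coupling time $\tau$, such that $\widetilde X_n = \widetilde X_n^{\ast}$ for every $n \ge \tau$. Since these are genuine versions of the original processes, the pair $(\widetilde X_n,\widetilde X_n^{\ast})$ is a coupling of $\law{X_n}$ and $\law{X_n^{\ast}}$ for each $n$.

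Next, the coupling inequality in the $L^1$ normalization of total variation (under which $\|\cdot\|_{TV}$ can be as large as $2$) gives
\[
\|\law{X_n} - \law{X_n^{\ast}}\|_{TV} \le 2\,\P\!\left(\widetilde X_n \ne \widetilde X_n^{\ast}\right).
\]
The forward coupling property immediately yields the inclusion $\{\widetilde X_n \ne \widetilde X_n^{\ast}\} \subseteq \{\tau > n\}$, whence $\P(\widetilde X_n \ne \widetilde X_n^{\ast}) \le \P(\tau > n)$. Substituting the explicit tail bound for $\P(\tau > n)$ obtained in Theorem~\ref{thm:stac_coupling} reproduces the claimed inequality, with the same constants $c$ and $\kappa$.

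There is essentially no obstacle: all of the genuine analytic work (drift/minorization iteration, stitching through regeneration times, transfer of mixing from $Y$) is absorbed into Theorem~\ref{thm:stac_coupling}, and the present corollary is a one-line consequence of the coupling inequality. The only point worth checking carefully is that the normalization convention for $\|\cdot\|_{TV}$ used in the statement is the one that pairs with the factor $2$ in the coupling inequality; this is precisely why the leading constant on the right-hand side is $2c$ rather than $c$.
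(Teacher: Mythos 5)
Your proposal is correct and follows essentially the same route as the paper: the paper phrases the first step as the optimal transportation cost characterization of total variation (which is precisely the coupling inequality you invoke), bounds the infimum over couplings by the forward-coupled versions from Theorem~\ref{thm:stac_coupling}, and then uses $\{\widetilde X_n \ne \widetilde X_n^{\ast}\} \subseteq \{\tau > n\}$ together with the tail bound on $\tau$, exactly as you do. Your remark about the normalization convention explaining the factor $2c$ is also the correct reading of the statement.
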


\begin{proof}
By the optimal transportation cost characterization of the total variation distance,  
\begin{align*}
	\frac{1}{2}\lVert \law{X_n} - \law{X_n^\ast} \rVert_{TV}
	\le
	\inf_{\kappa \in \mathcal{C}(X_n, X_n^\ast)}
	\int_{\X \times \X} \ind_{x \neq y} \, \kappa(\dint x, \dint y),
\end{align*}
where $\mathcal{C}(X_n, X_n^\ast)$ denotes the set of couplings of $\law{X_n}$ and $\law{X_n^\ast}$.   
Invoking Theorem~\ref{thm:stac_coupling}, the right-hand side can be bounded as
\begin{align*}
	\inf_{\kappa \in \mathcal{C}(X_n, X_n^\ast)}
	\int_{\X \times \X} \ind_{x \neq y} \, \kappa(\dint x, \dint y)
	&\le \P(\tau > n)
	\le c \inf_{1\leq i\leq q\leq n}\left\{r_i+\kappa^{n/q}+\alpha^Y(q-i)\right\}
\end{align*}
which yields the claimed inequality.
\end{proof}

An important consequence of Theorem~\ref{thm:stac_coupling} is that the inequality \eqref{boundmain} stated in Corollary~\ref{cor:A12fromMixing} also holds for the mixing coefficients \(\alpha^{X^*,Y}(n)\), \(n \in \mathbb{N}\), of the stationary solution. We state this result separately in the following theorem.
\begin{theorem}\label{thm:mix_stac_sol}
Under the assumptions and notations of Corollary~\ref{cor:A12fromMixing}, if the process \((Y_n)_{n \in \mathbb{Z}}\) is stationary and \(((X_n^*, Y_n))_{n \in \mathbb{Z}}\) denotes the stationary solution, then 
\[
\alpha^{X^*,Y}(n) \le c \inf_{1\leq i\leq q\leq n}\left\{r_i+\kappa^{n/q}+\alpha^Y(q-i)\right\}
\]
holds.
\end{theorem}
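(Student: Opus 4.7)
By the stationarity of $((X^*_k, Y_k))_{k \in \Z}$, the mixing coefficient $\alpha^{X^*,Y}(n)$ is attained at any fixed time index, so it suffices to bound $|\P(A \cap B) - \P(A)\P(B)|$ uniformly over $A \in \sigma(X^*_k, Y_k : k \le 0)$ and $B \in \sigma(X^*_k, Y_k : k \ge n)$. The plan is to reduce this to Corollary~\ref{cor:A12fromMixing} applied to a MCRE started from a deterministic point far in the past, with the replacement error controlled by the forward coupling tail of Theorem~\ref{thm:stac_coupling}.

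Fix $x_0 \in \X$ with $V(x_0) < \infty$ and, for each $M \ge 1$, let $(X^{(-M)}_k)_{k \ge -M}$ be the MCRE starting at time $-M$ from $X^{(-M)}_{-M} = x_0$, driven by the same environment $(Y_k)_{k \ge -M}$ and the same i.i.d.\ innovations as $X^*$. A time shift by $M$ reduces this to the setting of Theorem~\ref{thm:stac_coupling}, which yields a coupling time $\tau_M$ with $X^{(-M)}_k = X^*_k$ for all $k \ge -M + \tau_M$ and tail
\[
\P(\tau_M > m) \le c \inf_{1 \le i \le q \le m}\{r_i + \kappa^{m/q} + \alpha^Y(q-i)\}.
\]
Approximate $A$ by a cylinder $A_L \in \sigma(X^*_k, Y_k : -L \le k \le 0)$, which is possible because cylinder events are dense in the $\sigma$-algebra they generate, and take $M > L$. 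On the event $\{\tau_M \le M - L\}$ the two trajectories agree at every $k \ge -L$, so $A_L$ and $B$ coincide with events $A'_L$ and $B'$ built from $X^{(-M)}$ rather than $X^*$; a triangle inequality then gives
\[
|\P(A_L \cap B) - \P(A_L)\P(B)| \le |\P(A'_L \cap B') - \P(A'_L)\P(B')| + 6\, \P(\tau_M > M-L).
\]

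The time-shifted process $(X^{(-M)}_{k-M}, Y_{k-M})_{k \ge 0}$ is a MCRE starting from the deterministic state $x_0$ in an environment with the same $\alpha$-mixing rate as $Y$, so Corollary~\ref{cor:A12fromMixing} bounds the first term on the right by $c \inf_{1 \le i \le q \le n}\{r_i + \kappa^{n/q} + \alpha^Y(q+1-i)\}$. Sending first $M \to \infty$ (eliminating the coupling error) and then $L \to \infty$ together with the cylinder approximation yields the claimed estimate. The main obstacle is the one-step index mismatch between the $\alpha^Y(q+1-i)$ delivered by Corollary~\ref{cor:A12fromMixing} and the sharper $\alpha^Y(q-i)$ asserted in the theorem; this saving reflects the fact that the stationary solution does not need the extra initialization transition hidden in the proof of Lemma~\ref{lem:coupl_trans_mix}, and can be recovered by revisiting the coupling decomposition of that lemma with the forward coupling time $\tau_M$ playing the role of the control \eqref{eq:coupling_cond} and tracking the indices one step more carefully.
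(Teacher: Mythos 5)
Your reduction to the deterministically-initialized chain via forward coupling and cylinder approximation is essentially the paper's own argument: the paper likewise represents $A\in\F_{-\infty,j}^{X^*,Y}$ and $B\in\F_{n+j,\infty}^{X^*,Y}$ by cylinders, uses stationarity of $(X^*,Y)$ to shift the time window into the region where the coupling of Theorem~\ref{thm:stac_coupling} has taken effect, concludes $\alpha^{X^*,Y}(n)\le\alpha^{X,Y}(n)$, and invokes Corollary~\ref{cor:A12fromMixing}. (Shifting the events forward by $N$, as the paper does, versus starting the comparison chain at $-M$, as you do, are equivalent by stationarity; the constant $6$ versus $2$ in the coupling error is immaterial.)

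The one substantive problem is your closing paragraph: the ``one-step index mismatch'' you flag is not an obstacle, because you have the direction of the comparison backwards. The mixing coefficients $\alpha^Y(\cdot)$ are non-increasing, so $\alpha^Y(q+1-i)\le\alpha^Y(q-i)$; the bound with $\alpha^Y(q-i)$ asserted in Theorem~\ref{thm:mix_stac_sol} is therefore the \emph{weaker} of the two and follows immediately from the bound $c\inf\{r_i+\kappa^{n/q}+\alpha^Y(q+1-i)\}$ delivered by Corollary~\ref{cor:A12fromMixing}, term by term inside the infimum. There is no ``saving'' to recover, and no need to revisit the index bookkeeping in Lemma~\ref{lem:coupl_trans_mix}. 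With that last paragraph replaced by this one-line monotonicity observation, your argument is complete and coincides with the paper's proof.
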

\begin{proof}
Let \(X_0 = x_0\) be an arbitrary deterministic initial value with \(V(x_0) < \infty\), and let \((X_n)_{n \in \mathbb{N}}\) be a Markov chain in the random environment \((Y_n)_{n \in \mathbb{Z}}\) with initial state \(X_0 = x_0\). By Theorem~\ref{thm:stac_coupling}, there exist suitable versions of \((X_n)_{n \in \mathbb{N}}\) and \((X_n^*)_{n \in \mathbb{N}}\) that are forward coupled. By a slight abuse of notation, these forward coupled versions will also be denoted by \((X_n)_{n \in \mathbb{N}}\) and \((X_n^*)_{n \in \mathbb{N}}\) in the sequel. Denote by \(\tau\) the random coupling time, that is, for all \(n \ge \tau\), we have \(X_n^* = X_n\).

Let $m \in \Z$ and let $(C_k)_{k \ge m} \subset \B(\X \times \Y)$ be an arbitrary collection of Borel sets.  
Using the forward coupling of the processes $X$ and $X^*$ together with the strong stationarity of the process $(X^*,Y)$, we obtain
\[
\big|\P\big((X_{k+n},Y_{k+n}) \in C_k,\ k \ge m\big)
 - \P\big((X_{k}^*,Y_{k}) \in C_k,\ k \ge m\big)\big|
 \le 2\,\P(\tau > m+n) \ \xrightarrow[n \to \infty]{}\ 0.
\]

Let $j \in \Z$ and $n \in \N$ be fixed, and let $A \in \F_{-\infty,j}^{X^*,Y}$ and $B \in \F_{n+j,\infty}^{X^*,Y}$ be arbitrary events.  
Following the argument in the proof of Lemma~\ref{lem:coupl_trans_mix},  
there exist collections of Borel sets $(C_k)_{k \in \Z} \subseteq \B(\X \times \Y)$ such that  
\begin{equation}\label{eq:defAB1}
A = \big\{ (X_k^*, Y_k) \in C_k \ \text{for all} \ -\infty \le k \le j \big\},  
\quad  
B = \big\{ (X_k^*, Y_k) \in C_k \ \text{for all} \ k \ge j+n \big\}.
\end{equation}

By the above, for any $\eps > 0$ there exist $m \in \Z$ and $N \ge -m$ such that
\begin{equation*}
    \big| \P(A \cap B) - \P(A)\P(B) \big|
    \le \eps + \big| \P(A'_N \cap B'_N) - \P(A'_N)\P(B'_N) \big|,
\end{equation*}
where
\[
A'_N = \big\{ (X_{k+N}, Y_{k+N}) \in C_k \ \text{for all} \ m \le k \le j \big\},  
\quad  
B'_N = \big\{ (X_{k+N}, Y_{k+N}) \in C_k \ \text{for all} \ k \ge j+n \big\}.
\]
Since $\eps > 0$ was arbitrary, it follows that
\[
\big| \P(A \cap B) - \P(A)\P(B) \big| \le \alpha^{X,Y}(n).
\]
Taking the supremum over all $A \in \F_{-\infty,j}^{X^*,Y}$, $B \in \F_{n+j,\infty}^{X^*,Y}$ and $j\in\Z$ yields  
$\alpha^{X^*,Y}(n) \le \alpha^{X,Y}(n)$, from which the desired statement follows by Corollary~\ref{cor:A12fromMixing}.

\end{proof}

\subsection{Ramifications}\label{sec:ramifications}

Although a wide range of interesting models fall within the scope of the framework presented in the previous section, some important processes lie outside its reach. For instance, certain classes of vector autoregressive models do not satisfy the standard (one-step) drift and minorization conditions. To address this limitation, following the approach presented in \cite{Truquet1} and \cite{lovas}, we consider a setting in which the drift and small set conditions hold only after a fixed number of iterations of the parametric kernels. 

We close the section with a very simple example of a vector autoregressive model that satisfies the assumed \(p\)-step drift and minorization conditions, but not the one-step versions. 

\begin{assumption}\label{as:pkernel}
We assume the existence of a positive integer $p$ such that the following conditions hold:    
\begin{itemize}
\item[\bf A3] There exists a measurable mapping \( V: \mathcal{X} \to [0, \infty) \), called a Lyapunov function, and measurable functions \( \gamma, K: \Y^p \to (0, \infty) \) such that for all \( y_1, \ldots, y_p \in \Y \) and \( x \in \X \),
\begin{equation}\label{eq:pdrift}
    [Q(y_p) \cdots Q(y_1)V](x) \le \gamma (y_1, \ldots, y_p) V(x) + K(y_1, \ldots, y_p).
\end{equation}

Moreover, similarly to part A1 of Assumption \ref{as:main_assumptions}, we suppose that 
\begin{align*}
r_0:=\sum_{l\ge 0} d_l&<\infty, 
\text{ with }
d_0 = \sup_{t\ge 0} \E\left[K(Y_{tp},\ldots,Y_{tp+p-1})+\gamma (Y_{tp},\ldots,Y_{tp+p-1})\right]<\infty
\text{ and }
\\
d_l&:=\sup_{t\ge -1} \E\left[
K(Y_{tp},\ldots,Y_{tp+p-1})
\prod_{i=1}^l  \gamma (Y_{(t+i)p},\ldots,Y_{(t+i)p+p-1})
\right],\quad l\ge 1.
\end{align*}

\item[\bf A4] 
For any $R>0$, there exists a probability kernel $\kappa_R:\Y^p\times\B (\X)\to [0,1]$ and a measurable function $\beta: [0,\infty)\times\Y^p\to [0,1)$ such that
\begin{equation}\label{eq:pminor}
[Q(y_1) \cdots Q(y_p)\ind_A](x) \ge (1-\beta (R, (y_1,\ldots,y_p))) \kappa_R ((y_1,\ldots,y_p), A)
\end{equation}

for all $y_1,\ldots,y_p\in\Y$, $x\in \stackrel{-1}{V}([0,R])$, and $A\in\B (\X)$.

Furthermore, we assume that for any $R>0$,
$$
\lim_{\bar{\beta}\uparrow 1} \sup_{t\in\N} \P \left(\beta (R,(Y_{tp},\ldots,Y_{tp+p-1}))>\bar{\beta}\right)=0.
$$

\end{itemize}
\end{assumption}

Under these conditions, for the strong mixing coefficient sequence $\alpha^{X,Y}(n)$, $n \in \N$, we obtain the following bound, analogous to Corollary~\ref{cor:A12fromMixing}.
\begin{theorem}\label{thm:extended}
Let Assumption~\ref{as:pkernel} hold and suppose that the process $Y$ is $\alpha$-mixing. Define $r_i = \sum_{l \ge i} d_l$ for $i \ge 0$. Then there exist constants $\tilde{\kappa} \in (0,1)$ and $c' > 0$, such that
\begin{equation}\label{eq:palpha}
\alpha^{X,Y}(n) \le c' \inf \left\{r_i + \tilde{\kappa}^{n/q}+\alpha^Y (q-i)
\middle| 1\le i\le q\le n\right\},\quad n\ge 1.
\end{equation}

\end{theorem}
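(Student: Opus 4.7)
The plan is to reduce Theorem~\ref{thm:extended} to Theorem~\ref{thm:A12fromMixing} by considering the subsampled process $(X_{np})_{n\in\N}$ in the aggregated environment $Y_n^p := (Y_{np},\ldots,Y_{np+p-1})$, which by \eqref{eq:Qp} is an MCRE with parametric kernel $Q^p$. Assumption~\ref{as:pkernel} is, by construction, exactly Assumption~\ref{as:main_assumptions} for this subsampled MCRE, and the quantities $d_l$ and $r_i$ appearing in Theorem~\ref{thm:extended} coincide with the analogous quantities produced by Theorem~\ref{thm:A12fromMixing} when applied to the subsampled chain.

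The first step is to apply Theorem~\ref{thm:A12fromMixing} to the subsampled MCRE. This produces a measurable function $f^p : \X \times \Y^p \times [0,1] \to \X$, suitable for use in Lemma~\ref{lem:coupl_trans_mix}, together with constants $\kappa \in (0,1)$ and $c > 0$ such that the coupling rate $b(\cdot)$ defined in \eqref{eq:coupling_cond} satisfies
$$
b(N) \le c \inf_{1 \le i' \le q' \le N} \left\{ r_{i'} + \kappa^{N/q'} + \alpha^{Y^p}(q'+1-i') \right\}.
$$
A direct inspection of the underlying $\sigma$-algebras gives the comparison $\alpha^{Y^p}(k) \le \alpha^Y((k-1)p+1)$ for $k \ge 1$. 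Invoking Lemma~\ref{lem:coupl_trans_mix} with the same $p$ and with the function $f^p$ just produced yields, for every $0 \le m < \lfloor n/p \rfloor$,
$$
\alpha^{X,Y}(n) \le \alpha^Y(mp) + b(\lfloor n/p \rfloor - 1 - m),
$$
and substituting the previous bound for $b$ gives a two-scale estimate involving the outer parameter $m$ and the inner pair $(i', q')$.

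The remaining task, which I expect to be the main obstacle, is converting this two-scale estimate into the single infimum over $1 \le i \le q \le n$ claimed by the theorem. Given a target pair $(i,q)$, my plan is to choose $m = \lceil (q-i)/p \rceil$, $i' = i$ and $q' = \lceil q/p \rceil$, so that (using monotonicity of $\alpha^Y$) both $\alpha^Y(mp)$ and $\alpha^Y((q'-i')p+1)$ are dominated by $\alpha^Y(q-i)$, one has $r_{i'} = r_i$, and $\kappa^{N/q'}$ with $N = \lfloor n/p\rfloor - 1 - m$ is dominated by $\tilde\kappa^{n/q}$ for a suitably chosen $\tilde\kappa := \kappa^{1/(2p)}$. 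Before making these choices, I would dispose of the regime $q \gtrsim n/p$ (in which $\tilde\kappa^{n/q}$ is bounded away from $0$ and the stated bound becomes trivial upon enlarging $c'$), so that in the complementary regime the admissibility conditions $q' \le N$ and $m < \lfloor n/p\rfloor$ are automatically satisfied. Floor/ceiling artifacts and finitely many small-$n$ edge cases can then be absorbed into the final constants $c'$ and $\tilde\kappa$.
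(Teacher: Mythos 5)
Your overall strategy---subsample to $(X_{np})_{n\in\N}$ in the environment $Y^p_n$, apply Theorem~\ref{thm:A12fromMixing} to obtain the coupling bound $b(\cdot)$, compare $\alpha^{Y^p}$ with $\alpha^Y$, and feed everything into Lemma~\ref{lem:coupl_trans_mix}---is exactly the paper's proof. The gap is in the final index bookkeeping, which you yourself flag as the main obstacle, and your proposed resolution does not work. The choice $i'=i$, $q'=\lceil q/p\rceil$ has two problems. First, admissibility: the inner infimum requires $i'\le q'$, i.e.\ $i\le\lceil q/p\rceil$, which fails for any target pair with $i>q/p$ (say $i=q/2$ and $p\ge 3$), and such pairs are not covered by your ``$q\gtrsim n/p$'' triviality argument. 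Second, and more seriously, the $\alpha$-term is not dominated: you get $\alpha^{Y^p}(q'+1-i')\le\alpha^Y\bigl((q'-i')p+1\bigr)$ with
$$(\lceil q/p\rceil-i)p+1\;\le\; q+p-ip,$$
and $q+p-ip\ge q-i$ forces $i\le p/(p-1)$, i.e.\ essentially $i=1$. For $i\ge 2$ your argument of $\alpha^Y$ is roughly $q-ip$, and $\alpha^Y(q-ip)$ can exceed $\alpha^Y(q-i)$ by an arbitrarily large factor (take $\alpha^Y$ super-exponentially decaying), so the claimed bound $c'\bigl(r_i+\tilde\kappa^{n/q}+\alpha^Y(q-i)\bigr)$ is not established for those pairs, and hence the infimum over \emph{all} pairs $1\le i\le q\le n$ is not controlled.

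The fix---and what the paper does---is not to rescale the inner pair at all: keep $(i',q')=(i,q)$ and take the fixed value $m=\lfrf{n/(2p)}$. Then $\alpha^{Y^p}(q+1-i)\le\alpha^Y\bigl(p(q-i)+1\bigr)\le\alpha^Y(q-i)$ whenever $q>i$ (here the factor $p$ works in your favour rather than against you); $\kappa^{N/q}$ with $N=\lfrf{n/p}-1-\lfrf{n/(2p)}\ge n/(2p)-2$ is $O(\tilde\kappa^{n/q})$ for $\tilde\kappa=\kappa^{1/(2p)}$; and the outer term $\alpha^Y(mp)$ is absorbed into the infimum since $mp\ge p(q-i)+1$ for the admissible pairs $q\le N$. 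The remaining pairs ($q>N$, or $q=i$) are handled by observing that $\tilde\kappa^{n/q}+\alpha^Y(q-i)$ stays bounded away from $0$ on them, so they are covered after enlarging $c'$; the finitely many small $n$ are absorbed likewise.
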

\begin{proof}

Under Assumption~\ref{as:pkernel}, the sequence $(X_{np})_{n \in \mathbb{N}}$ forms a Markov chain in the random environment  
\[
Y^p_n := (Y_{np}, Y_{np+1}, \ldots, Y_{np+p-1}), \quad n \in \mathbb{N},
\]  
to which Theorem~\ref{thm:A12fromMixing} applies. In particular, there exists a measurable function  
\[
f^p : \mathcal{X} \times \mathcal{Y}^p \times [0,1] \to \mathcal{X}
\]  
and an i.i.d.\ sequence $(\varepsilon_n)_{n \in \mathbb{N}}$, independent of both $(Y^p_n)_{n \in \mathbb{N}}$ and $(X_n)_{n\in\N}$, such that for all $y_1, \ldots, y_p \in \mathcal{Y}$, $x \in \mathcal{X}$, and $B \in \mathcal{B}(\mathcal{X})$,  
\[
\big[ Q(y_1) \cdots Q(y_p) \mathbf{1}_B \big](x)  
= \mathbb{E} \!\left[ \mathbf{1}_{\{ f^p(x, (y_1, \ldots, y_p), \varepsilon_0) \in B \}} \right].
\]

Moreover, for any $x_0 \in \mathcal{X}$ with $V(x_0) < \infty$, there exist constants $\kappa \in (0,1)$ and $c > 0$ such that  
\[
b(n) := \sup_{l \in \mathbb{N}} \mathbb{P} \!\left( Z_{lp, (l+n)p}^{X_{lp}} \neq Z_{lp, (l+n)p}^{x_0} \right)  
\le c \inf_{1 \le i \le q \le n} \left\{ r_i + \kappa^{n/q} + \alpha^{Y^p}(q-i) \right\}.
\]

It is straightforward to check that for the mixing coefficients
\[
\alpha^{Y^p}(n)\le \alpha^Y(np-p+1),\qquad n\ge 1,
\]
and hence
\[
b(n)\le c \inf_{1 \le i \le q \le n} \big\{ r_i + \kappa^{n/q} + \alpha^{Y}(p(q-i-1)+1) \big\}.
\]
By Lemma~\ref{lem:coupl_trans_mix}, for \(n\ge 2p\) and with the choice \(m=\lfrf{\frac{n}{2p}}\) there exist constants \(c'>0\) and \(\tilde{\kappa}\in(0,1)\) such that
\begin{align*}
\alpha^{X,Y}(n) &\le \alpha^Y \left(\lfrf{\frac{n}{2p}}p\right)
+ b\left(\lfrf{\frac{n}{p}}-1-\lfrf{\frac{n}{2p}}\right)
\\
&\le c' \inf \left\{r_i + \tilde{\kappa}^{n/q}+\alpha^Y (p(q-i-1)+1)
\middle| 1\le i\le q\le \lfrf{\frac{n}{p}}-1-\lfrf{\frac{n}{2p}}
\right\}.
\end{align*}
Since $\alpha^Y(p(q-i-1)+1)\leq \alpha^Y(q-i)$ as soon as $q-i\geq 1$ and $\widetilde{\kappa}^{n/q}+\alpha^Y(p(q-i-1)+1)$ does not converge to $0$ when 
$[n/p]-1-[n/(2p)]\leq q\leq n$ or when $q=i$,  one can simply change the constant $c'$ so that the previous infimum can be extended to the set of pair of integers $(i,q)$ such that $1\leq i\leq q\leq n$. The result in then valid for $n\geq 2p$ but it can be extended to any $n\geq 1$ using a suitable constant $c'>0$. This yields the desired result.

\end{proof}


\begin{lemma}\label{lem:pVbound}
Assume that Assumption~\ref{as:pkernel} holds. Furthermore, suppose that either  
\begin{enumerate}
\item \(X_0\) is independent of \((Y_n)_{n\in\Z}\), satisfies \(\E[V(X_0)]<\infty\), and the parametric kernel \(Q\) obeys  
\begin{equation}\label{eq:OneStepC}
[Q(y)V](x) \le C\big(V(x)+1\big)
\end{equation}
for some constant \(C>1\), or  
\item the two–sided process \(((X_n,Y_n))_{n\in\Z}\) is stationary.  
\end{enumerate}
Then  
\begin{equation}
\sup_{j\ge 0} \E[V(X_j)] < \infty.
\end{equation}
\end{lemma}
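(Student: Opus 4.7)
The plan is to reduce everything to Lemma~\ref{mom} via the device of subsampling by $p$. The subsampled chain $(X_{np})_{n \in \N}$ is itself an MCRE in the environment $(Y_n^p)_{n\in\N} := ((Y_{np},\ldots,Y_{np+p-1}))_{n\in\N}$ with parametric kernel $Q^p$ from~\eqref{eq:Qp}, and Assumption~\ref{as:pkernel} is literally Assumption~\ref{as:main_assumptions} applied to this subsampled kernel in this subsampled environment (with drift coefficients $\gamma(Y_n^p)$, $K(Y_n^p)$ playing the role of $\gamma(Y_n)$, $K(Y_n)$).

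For hypothesis~(1), I would apply the non-stationary branch of the proof of Lemma~\ref{mom} to the subsampled MCRE. Since $X_0$ is independent of $Y$, iterating the $p$-step drift and taking expectations gives
\[
\E[V(X_{np})] \le d_n\,\E[V(X_0)] + r_0, \qquad n \ge 1,
\]
which is uniformly bounded in $n$ because $\sum_\ell d_\ell < \infty$ forces $d_n \to 0$ and $\E[V(X_0)]<\infty$. The remaining indices $j = np + k$ with $1 \le k \le p-1$ are not controlled by Assumption~\ref{as:pkernel} alone and must be handled with the one-step growth bound~\eqref{eq:OneStepC}: iterating it $k$ times yields
\[
\E[V(X_{np+k}) \mid X_{np}] \le C^k V(X_{np}) + C\,\frac{C^k - 1}{C-1},
\]
and taking expectations and maximising over $0 \le k < p$ gives $\sup_j \E[V(X_j)] \le C^{p-1} \sup_n \E[V(X_{np})] + C(C^{p-1}-1)/(C-1) < \infty$.

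For hypothesis~(2), stationarity of $((X_n, Y_n))_{n\in\Z}$ gives $\E[V(X_j)] = \E[V(X_0)]$ for every $j \ge 0$, so it suffices to show $\E[V(X_0)] < \infty$. The subsampled pair $((X_{np}, Y_n^p))_{n \in \Z}$ is again stationary, so the stationary branch of the proof of Lemma~\ref{mom} applies verbatim to the subsampled MCRE: summability of $(d_\ell)$ forces $\log d_n < 0$ eventually, Jensen's inequality extracts $\E[\log \gamma(Y_0^p)] < 0$, Truquet's existence and uniqueness result~\cite{Truquet1} furnishes the associated stationary random probability measure, and the truncation $V_M = V \wedge M$ together with monotone convergence yields $\E[V(X_0)] \le r_0 < \infty$.

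The argument is essentially bookkeeping once the subsampling reduction is recognised; the main (modest) obstacle is the treatment of the interior indices $np < j < (n+1)p$ in hypothesis~(1). Assumption~\ref{as:pkernel} provides no handle on $\E[V(X_j)]$ strictly between consecutive multiples of $p$, and this gap is precisely what the extra hypothesis~\eqref{eq:OneStepC} fills. No analogous auxiliary ingredient is needed in hypothesis~(2), where stationarity trivialises the interior indices.
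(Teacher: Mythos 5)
Your proposal is correct and follows essentially the same route as the paper: reduce to Lemma~\ref{mom} via the subsampled chain $(X_{np})_{n\in\N}$ in the environment $(Y_n^p)_{n\in\N}$ to control $\sup_n \E[V(X_{np})]$, then fill in the interior indices by iterating \eqref{eq:OneStepC} at most $p-1$ times in case (1) and by invoking stationarity in case (2). The only differences are cosmetic (slightly different but equivalent constants from iterating \eqref{eq:OneStepC}, and unpacking the stationary branch of Lemma~\ref{mom} rather than citing it as a black box).
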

\begin{proof}
We again use that under Assumption~\ref{as:pkernel}, the sequence \((X_{np})_{n \in \mathbb{N}}\) forms a Markov chain in the random environment \((Y^p_n)_{n\in\N}\), which satisfies Assumption~\ref{as:main_assumptions}. Hence, by Lemma~\ref{mom}, we have
\begin{equation}\label{eq:supjE}
M:=\sup_{j \ge 0} \E[V(X_{jp})] < \infty.
\end{equation}

In the first case, if \(n = pm + r\) with \(0 \le r < p\), then by the tower rule and applying inequality~\eqref{eq:OneStepC} \(r\) times, we obtain
\begin{align*}
\E [V(X_n)] 
&= \E \big[ [Q(Y_{pm+1}) \cdots Q(Y_{pm+r}) V](X_{pm}) \big] \\
&\le C^r \big( \E [V(X_{pm})] + r \big) \\
&\le C^{p-1} (M + p - 1),
\end{align*}
which is an upper bound independent of \(n\). This completes the proof in the first case.

In the second case, the stationarity of \(((X_n,Y_n))_{n\in\Z}\) directly implies that 
\(M = \E[V(X_n)]\) is independent of \(n\), from which the desired claim follows trivially.
\end{proof}

\paragraph{Mixing properties of a simple linear autoregressive model.}

We consider linear vector autoregressive models of the form
\begin{equation}\label{eq:myVARX}
    X_{n+1} = A X_n + B Y_n + \varepsilon_{n+1},
\end{equation}
where $X_n, Y_n \in \mathbb{R}^d$ for $n \in \mathbb{N}$. 
Here $A$ and $B$ are fixed $d \times d$ matrices such that the spectral radius of $A$ satisfies $r(A) < 1$, while $\|A\| > 1$. 
The noise sequence $(\varepsilon_n)_{n \ge 1}$ is i.i.d.\ with values in $\mathbb{R}^d$, independent of the process $(Y_n)_{n \in \mathbb{N}}$. 
Finally, we assume that $X_0$ and $(\varepsilon_n)_{n \ge 1}$ are conditionally independent given $Y$, that $\mathbb{E}[|\varepsilon_0|] < \infty$, and that the distribution of $\varepsilon_0$ is absolutely continuous with respect to the Lebesgue measure, with density $\varphi_{\varepsilon_0}:\mathbb{R}^d \to (0,\infty)$ bounded away from zero on compact sets.

Choose the Euclidean norm as the Lyapunov function, that is, let $V(x)=\|x\|$. 
The parametric kernel corresponding to the recursion \eqref{eq:myVARX} is given by
\[
Q(y,x,H)=\mathbb{P}\big(Ax+By+\varepsilon_0\in H\big),
\qquad x,y\in\mathbb{R}^d,\; H\in\mathcal{B}(\mathbb{R}^d).
\]
Assume that $Q$ satisfies the drift condition \eqref{eq:drift} with some $\ga(.)$ and $K(.)$. Fix $y\in\mathbb{R}^d$ and a unit vector $x_*\in\mathbb{R}^d$ such that $\|Ax_*\|=\|A\|$. By the reverse triangle inequality, for any $t>0$ we obtain
\[
\mathbb{E}\!\left[\big|\,t\|A\|-\|By+\varepsilon_0\|\,\big|\right]
\;\le\;[Q(y)V](tx_*)=\mathbb{E}\big[\|A(tx_*)+By+\varepsilon_0\|\big]
\;\le\;t\,\gamma(y)+K(y).
\]
Dividing through by $t$ and letting $t\to\infty$, we conclude that $\gamma(y)\geq\|A\|>1$ for all $y\in\mathbb{R}^d$. Hence, part~A1 of Assumption~\ref{as:main_assumptions} cannot hold.

Let $M \ge 1$ be chosen large enough so that $\max\left(\|A\|,\|B\|, \mathbb{E}[|\varepsilon_0|]\right)\le M$. Since $r(A)<1$, there exists $p\in\mathbb{N}$ such that $\|A^p\|<1$. It is straightforward to verify that the $p$-step parametric transition kernel is given by
\[
Q((y_1,\ldots,y_p),x,H) 
= \mathbb{P}\!\left(A^p x + \sum_{k=1}^p A^{p-k}(B y_k + \varepsilon_{k+1}) \in H\right),
\quad x,y_1,\ldots,y_p \in \mathbb{R}^d,\; H\in\mathcal{B}(\mathbb{R}^d).
\]
Regarding the $p$-step drift condition, we obtain
\begin{align*}
[Q((y_1,\ldots,y_p))V](x) 
&= \mathbb{E}\!\left[\left\|A^p x + \sum_{k=1}^p A^{p-k}(B y_k + \varepsilon_{k+1})\right\|\right] \\
&\le \|A^p\|\,\|x\| + M^p\!\left(p+\sum_{k=1}^p \|y_k\|\right).
\end{align*}
Thus, the $p$-step drift condition \eqref{eq:pdrift} holds with 
\[
\gamma(y_1,\ldots,y_p)=\|A^p\|<1,
\quad
K(y_1,\ldots,y_p)=M^p\!\left(p+\sum_{k=1}^p \|y_k\|\right).
\]
In particular, if $\sup_{n\in\mathbb{N}}\E\left[\|Y_n\|\right]<\infty$, then part~A3 of Assumption~\ref{as:pkernel} is satisfied.

\medskip 
Now, let $x\in\mathbb{R}^d$ satisfy $\|x\|\le R$, let $y_1,\ldots,y_p\in\mathbb{R}^d$ be arbitrary, and let $H\in\mathcal{B}(\mathbb{R}^d)$ be an arbitrary Borel set. Define
\[
w = A^p x + \sum_{k=1}^{p-1} A^{p-k}(B y_k+\varepsilon_{k+1}) + B y_p,
\]
and choose $L=L(R,y_1,\ldots,y_p)>0$ such that $\mathbb{P}(\|w\|\le L)\ge 1/2$.  
By independence, we can estimate
\begin{align}\label{eq:alma}
\begin{split}
Q((y_1,\ldots,y_p),x,H) 
&= \mathbb{P}(w+\varepsilon_{p+1}\in H) \\
&\ge \mathbb{P}(\|w\|\le L)\times \inf_{\|v\|\le L}\mathbb{P}(v+\varepsilon_0\in H) \\
&\ge \tfrac{1}{2}\inf_{\|v\|\le L}\mathbb{P}(v+\varepsilon_0\in H).
\end{split}
\end{align}

Since the density $\varphi_{\varepsilon_0}$ is bounded away from zero on compact sets, for $\|v\|\le L$ we have
\begin{align*}
\mathbb{P}(v+\varepsilon_0\in H)
&= \int_{\mathbb{R}^d}\mathbf{1}_{\{v+z\in H\}}\varphi_{\varepsilon_0}(z)\,\mathrm{d}z \\
&\ge \int_{\|s\|\le 1}\mathbf{1}_{\{s\in H\}}\varphi_{\varepsilon_0}(s-v)\,\mathrm{d}s \\
&\ge C\,\mathrm{Leb}(H\cap B_{L+1}(0)),
\end{align*}
where $C>0$ is a normalization constant.  

Combining this with \eqref{eq:alma}, we obtain
\[
Q((y_1,\ldots,y_p),x,H)\;\ge\;\tfrac{C}{2}\,\mathrm{Leb}(H\cap B_{L+1}(0)),
\]
which shows that part~A4 of Assumption~\ref{as:pkernel} is also satisfied.

Note that for this system, $d_l \to 0$ exponentially fast. Hence, by Theorem~\ref{thm:extended}, it immediately follows that there exist constants $c''>0$ and $r\in (0,1)$ such that
$$
\alpha^{X,Y} (n)\le c'' \left[r^{n^{1/2}}+\alpha^Y \left(\lfrf{\frac{n^{1/2}}{2}}\right)\right].
$$
It is straightforward to verify that the parametric kernel $Q$ satisfies part~1 of Lemma~\ref{lem:pVbound}. Consequently, for any function $\Phi:\mathbb{R}^d \to \mathbb{R}$ satisfying
\[
|\Phi(x)| \le c' \,(1+\|x\|)
\]
for some constant $c'>0$, the moments are uniformly bounded:
\[
\sup_{n \in \mathbb{N}} \mathbb{E}\big[ |\Phi(X_n)| \big] < \infty.
\]
 
In Section~6 of \cite{lovas} and Section~4.2 of \cite{Truquet1}, the authors study linear systems in random environments that are considerably more complex than the ones considered here. Due to space constraints, we do not address these more general systems; however, the analysis presented above can be extended to them in a straightforward manner.

\section{Applications in Econometrics}\label{sec:econometrics}

In this section, we provide many applications of our results to time series analysis. For simplicity, we focus on stationary sequences for which statistical inference
is easier to investigate, though many limit theorems are still valid under non-stationary strong mixing sequences. 
Some standards textbooks such as \cite{Bradley2005}, \cite{Rio} or \cite{doukhan1995mixing} provide various convergence results for partial sums of $\alpha-$mixing processes.

\subsection{Mixing properties of a location-scale autoregressive model}

 Let $\left(\varepsilon_t\right)_{t\in\Z}$ be a sequence of i.i.d. random variables, independent of a stationary processes $(Y_t)_{t\in\Z}$ which takes values in $\R^d$.
 We assume here that
$$X_t=r\left(Y_{t-1},X_{t-1}\right)+\varepsilon_t\sigma\left(Y_{t-1},X_{t-1}\right),\quad X_0=x,$$
with $\E\left(\varepsilon_0\right)=0$, $\E\varepsilon_0^2=1$.
The two measurable mappings $r:\R^d\times \R\rightarrow \R$ and $\sigma:\R^d\times \R\rightarrow \R_+$ satisfy for $(x,y)\in\R^{d+1}$,
$$\left\vert r(y,x)\right\vert \leq a(y)\vert x\vert+b(y),$$
$$\left\vert \sigma(y,x)\right\vert\leq c(y)\vert x\vert+d(y),$$
where $a,b,c,d:\R^d\rightarrow \R_+$ are four measurable mappings.
Here we set $\gamma(y)=a(y)+c(y)$ and $K(y)=b(y)+d(y)$.
It is clear that Jensen's inequality leads to 
$$Q(y)V\leq \gamma(y)V+K(y),$$
with $V(x)=\vert x\vert$.
The strong mixing properties of the process $\left((X_t,Y_t)\right)_{t\geq 0}$ can be obtained from several assumptions on the process $(Y_t)_{t\in\Z}$ and on the mappings $\gamma$ and $K$. We give here two specific cases of transfert mixing properties which can be obtain directly from Lemma \ref{conc}, points $2.$ and $4.$, and (\ref{eq:trans_mix}) with $m=[n/2]$. Other assumptions are possible.

\begin{proposition}\label{illus}
Suppose that $(Y_t)_{t\in\Z}$ is a stationary process with a power decay for the mixing rates, i.e. $\alpha_Y(m)=O\left(m^{-a}\right)$ for some $a>0$.
Suppose furthermore that the probability distribution of $\varepsilon_0$ has a density $f$ with respect to the Lebesgue measure and which is lower-bounded by a positive constant on any compact interval. Suppose furthermore that for any $R>0$, $$\inf_{\vert x\vert\leq R}\sigma(y,x)>0.$$
\begin{enumerate}
\item 
Suppose that there exists $\gamma_+\in (0,1)$ such that $\gamma(Y_0)\leq \gamma_+$ a.s. and $\E K(Y_0)<\infty$.
Then the process $\left((X_t,Y_t)\right)_{t\in\Z}$ is $\alpha-$mixing with 
$$\alpha_{X,Y}(m)=O\left(\left(\log m\right)^a m^{-a}\right).$$
\item 
Suppose that $\gamma(Y_0)\in [0,1]$ a.s. with $\P\left(\gamma(Y_0)<1\right)>0$ and $\E K(Y_0)^k<\infty$ for some $k>1$. Then if $b:=a(k-1)/k>1$ the process $\left((X_t,Y_t)\right)_{t\in\Z}$ is $\alpha-$mixing with 
$$\alpha_{X,Y}(m)=O\left(\left(\log m\right)^{2b-1} m^{-b+1}\right).$$
\end{enumerate}
\end{proposition}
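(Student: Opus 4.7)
The plan is to apply Corollary \ref{cor:A12fromMixing} (equivalently, to combine Theorem \ref{thm:A12fromMixing} with inequality (\ref{eq:trans_mix}), in which $p=1$ and $m=[n/2]$ as indicated in the statement), after verifying Assumption \ref{as:main_assumptions} for this location-scale model and extracting the appropriate decay of the sequence $(r_i)_{i\geq 1}$ from Lemma \ref{conc}. The remaining work is to optimize the infimum in the resulting mixing bound by a calibrated choice of $i$ and $q$.

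Regarding Assumption \ref{as:main_assumptions}, the one-step drift with $V(x)=\vert x\vert$, $\gamma(y)=a(y)+c(y)$ and $K(y)=b(y)+d(y)$ is already recorded before the proposition, and the integrability requirement $d_0<\infty$ holds under either set of hypotheses. For {\bf A2}, fix $R>0$ and $\vert x\vert\leq R$; the transition $Q(y,x,\cdot)$ admits the density $z\mapsto f((z-r(y,x))/\sigma(y,x))/\sigma(y,x)$, which, thanks to $\inf_{\vert x\vert\leq R}\sigma(y,x)>0$ and the lower bound on $f$ over compact intervals, is bounded below by a positive constant on a bounded interval of $\R$. Hence $\beta(R,y)<1$ for every $y\in\R^d$, and by stationarity of $(Y_t)_{t\in\Z}$ the Note following Assumption \ref{as:main_assumptions} delivers {\bf A2}.

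By stationarity, $d_\ell=\E\left[K(Y_0)\prod_{k=1}^{\ell}\gamma(Y_k)\right]$. In case 1, $\gamma(Y_k)\leq \gamma_+$ a.s.\ gives $d_\ell\leq \gamma_+^{\ell}\,\E K(Y_0)$ and hence geometric decay $r_i=O(\gamma_+^{i})$, which is precisely the setting of Lemma \ref{conc}, point 2. In case 2, the assumptions $\gamma(Y_0)\in[0,1]$ with $\P(\gamma(Y_0)<1)>0$, $\E K(Y_0)^k<\infty$ together with the polynomial mixing $\alpha^Y(m)=O(m^{-a})$ place us in the setting of Lemma \ref{conc}, point 4, which yields a polynomial decay for $r_i$ whose exponent is governed by $b=a(k-1)/k$.

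Plugging these bounds into the infimum, I would take $q\sim n/\log n$ so that $\kappa^{n/q}$ becomes faster than any polynomial, and $i=[q/2]$ so that $r_i$ and $\alpha^Y(q+1-i)$ balance at their respective polynomial rates up to logarithmic factors. In case 1 the $\alpha^Y$ term dominates and the announced rate $O((\log n)^{a} n^{-a})$ follows at once. In case 2 the $r_i$ term dominates since $b-1<a$, and the logarithmic factor produced by $q\sim n/\log n$ multiplies the intrinsic logarithmic correction appearing in Lemma \ref{conc}, point 4, to yield the exponent $2b-1$. I do not anticipate a serious conceptual obstacle, since the heavy analytic work is packaged in Lemma \ref{conc}; the step requiring genuine care is the tracking of logarithmic factors in case 2 to recover precisely $(\log n)^{2b-1}$ rather than a nearby power.
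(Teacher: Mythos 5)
Your proposal is correct and follows the paper's own (very terse) route: verify the one-step drift and the minorization for this location-scale model, then invoke Lemma \ref{conc} together with (\ref{eq:trans_mix}) at $m=[n/2]$; your explicit optimization over $(i,q)$ merely unpacks what the relevant points of Lemma \ref{conc} already encode, and the quantitative conclusions match. One correction to your citations: you have swapped the two points — case 1 (geometric decay of $d_\ell$, i.e.\ long-term contractivity) is handled by Lemma \ref{conc} point 4, while case 2 ($[0,1]$-valued $\gamma$ with $\E K(Y_0)^k<\infty$) is handled by Lemma \ref{conc} point 2, the intrinsic $(\log n)^{b} n^{-(b-1)}$ decay of $r_n$ there coming from Lemma \ref{products} point 4 rather than from Lemma \ref{conc} point 4.
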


\paragraph{NoteS}
\begin{enumerate}
    \item
For instance, if $\varepsilon_0$ is a standard Gaussian and $d=1$, the model
$$X_t=a_0 Y_{t-1}+a_1 X_{t-1}\mathds{1}(X_{t-1}<r)+a_2 X_{t-1}\mathds{1}(X_{t-1}\geq r)+\varepsilon_t\sqrt{b_0^2+b_1^2 X_{t-1}^2+b_2^2 Y_{t-1}^2},$$
which is a version of the threshold autoregressive model of \cite{tong1980threshold} with exogenous covariate and conditional heteroscedasticity satisfies the assumptions of point $1.$ of Proposition \ref{illus} as soon as $b_0^2>0$, $\max\left(\vert a_1\vert,\vert a_2\vert\right)+\vert b_1\vert<1$ and $Y_0$ is integrable.
\item 
Autoregressive models for which $a(y)\in [0,1]$ can be arbitrarily close to $1$ has some interest to introduce more persistence in the model. For instance, if $r(y,x)=\left(1-\exp(-y)\right)x$ for positive $y$, the autoregressive process is close to a unit root behavior when the covariate takes large values.
\end{enumerate}

	\subsection{Parametric estimation with miss-specified models}
	
	In this section, we consider a parametric model of Markov chains in random environments as a model for time series with strictly exogenous covariates. Suppose that $Y_1,\ldots,Y_T$ is a sample of a stationary time series ${\bf Y}:=(Y_t)_{t\in\Z}$ taking values in $\R^d$. Conditionally on ${\bf Y}$, we assume that 
	$(X_t)_{t\in \Z}$ follows the dynamic of a MCRE, taking values in $\R^k$ and with transition kernel $Q\left(y,\cdot,\cdot\right)$, $y\in \R^d$. Let us consider a parametric model
	$$\left\{Q_{\theta}(y,x,A)=\int_A q_{\theta}(y,x,x')\mu(dx'): (\theta,x,y,A)\in \Theta\times \R^k\times\R^d\times \mathcal{E}\right\}$$ of transition kernels,
	where $\Theta$ is a measurable subset of $\R^m$, $\mathcal{E}$ denotes the Borel sigma field on $\R^k$ and $\mu$ is a reference measure.

	Let us define the estimator 
	$$\hat{\theta}_n=\arg\max_{\theta\in \Theta} n^{-1}\sum_{t=1}^n h_t(\theta),\quad h_t(\theta)=\log q_{\theta}\left(Y_{t-1},X_{t-1},X_t\right).$$
	We also denote by 
	$$\theta_0=\arg\max_{\theta\in \Theta}\E\left(h_0(\theta)\right),$$
	provided this quantity is well defined and unique.
	
	Here we do not assume that the model is well specified, i.e. there exists $\theta\in \Theta$ such that $Q=Q_{\theta}$. In a well specified scenario,  maximum likelihood estimators for smooth parametric Markov models are often shown to be consistent and asymptotic normal only using ergodic properties of the process. Indeed, for the asymptotic normality, the score process $\left(\nabla_{\theta}h_t(\theta_0)\right)_{t\in\Z}$ is a martingale difference and a central limit theorem for ergodic martingale differences can be used. In the miss-specified case, the later property is not necessarily true and an alternative is to use central limit theorems for mixing processes.
	
	In what follows, for a mapping $h:\Theta\rightarrow \R$, we denote by $\nabla h$ its gradient and $\nabla^{(2)}h$ its Hessian matrix.
	
	\begin{theorem}\label{param}
	Suppose that $(X_t)_{t\in\Z}$ is a stationary and ergodic process for which Assumption \ref{as:main_assumptions} is satisfied. Suppose furthermore that $\Theta$ is a compact and convex set and $\theta\mapsto \E h_0(\theta)$ has a unique argmax denoted by $\theta_0$.
	
	\begin{enumerate}
	    \item 
	    Suppose that $\theta\mapsto \E h_0(\theta)$ is continuous on $\Theta$, with $\sup_{\theta\in\Theta}\left\vert h_0(\theta)\right\vert$ integrable. Then $$\lim_{n\rightarrow \infty}\hat{\theta}_n=\theta_0\mbox{ a.s.}.$$
	    \item
	    Suppose additionally that $\theta_0$ is an interior point of $\Theta$,  $\theta\mapsto h_0(\theta)$ is two times continuously differentiable mapping, $\E\Vert \nabla h_0(\theta_0)\Vert^{2r}<\infty$ for some $r>2$,  $\sup_{\theta\in \Theta}\Vert \nabla^{(2)}h_0(\theta)\Vert$ has a finite moment and $M_{\theta_0}:=\E\nabla^{(2)}h_0(\theta_0)$ is an invertible matrix.
        Suppose furthermore that $\sum_{k\geq 1}k^{\frac{1}{r-1}}\alpha_{X,Y}(k)<\infty$. Then
	    $$\sqrt{n}\left(\hat{\theta}_n-\theta_0\right)\Rightarrow \mathcal{N}_d\left(0,M_{\theta_0}^{-1}N_{\theta_0}M_{\theta_0}^{-1}\right),$$
	    where 
	    $$N_{\theta_0}=\sum_{\ell\in\Z}\cov\left(\nabla h_0(\theta_0),\nabla h_{\ell}(\theta_0)\right).$$ 
	    	\end{enumerate}
		\end{theorem}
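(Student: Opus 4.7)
The plan is the standard two-step strategy for M-estimators under mis-specification: establish a uniform law of large numbers on $\Theta$ for consistency, then combine a mean-value expansion of the score with a CLT for $\alpha$-mixing sequences to obtain asymptotic normality. Stationarity and ergodicity of $((X_t,Y_t))_{t\in\Z}$, together with Lemma \ref{mom}, will supply the averaging and moment tools throughout.

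For part 1, I would apply Birkhoff's ergodic theorem to obtain $n^{-1}\sum_{t=1}^n h_t(\theta)\to \E h_0(\theta)$ almost surely for each fixed $\theta$. A standard compactness argument, using the integrable envelope $\sup_{\theta\in\Theta}\left\vert h_0(\theta)\right\vert \in L^1$ together with continuity of $\theta\mapsto h_0(\theta)$, upgrades this to uniform a.s.\ convergence on $\Theta$ via a finite $\eps$-cover of $\Theta$ and a monotone control of local oscillations. The assumed uniqueness of the argmax $\theta_0$ then yields $\hat\theta_n\to\theta_0$ a.s.\ by the classical argmax argument.

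For part 2, since $\theta_0$ lies in the interior of $\Theta$ and $\hat\theta_n\to\theta_0$, eventually $\nabla\bar h_n(\hat\theta_n)=0$ where $\bar h_n=n^{-1}\sum_{t=1}^n h_t$. A coordinate-wise mean-value expansion yields
\[
\sqrt{n}(\hat\theta_n-\theta_0) = -H_n^{-1}\cdot \sqrt{n}\,\nabla\bar h_n(\theta_0),
\]
where $H_n$ is a matrix whose rows are $\nabla^{(2)}\bar h_n$ evaluated at intermediate points that converge to $\theta_0$ in probability. A second ULLN on a neighborhood of $\theta_0$, using the integrable envelope $\sup_{\theta\in\Theta}\Vert \nabla^{(2)} h_0(\theta)\Vert$ together with continuity, gives $H_n\to M_{\theta_0}$ in probability, and the invertibility of $M_{\theta_0}$ allows inversion for large $n$. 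What remains is a CLT for the score: since $\nabla h_t(\theta_0)$ is a measurable function of $(Y_{t-1},X_{t-1},X_t)$, the sequence $(\nabla h_t(\theta_0))_{t\in\Z}$ is stationary and $\alpha$-mixing with coefficients dominated, up to a one-step shift, by $\alpha^{X,Y}$. Under $\E\Vert\nabla h_0(\theta_0)\Vert^{2r}<\infty$ and $\sum_k k^{1/(r-1)}\alpha^{X,Y}(k)<\infty$, a classical CLT for stationary $\alpha$-mixing sequences (e.g.\ Theorem 1.7 of \cite{Rio}, based on Rio's covariance inequality $\cov(f,g)\leq 2\int_0^{\alpha}Q_f(u)Q_g(u)\,du$) delivers $n^{-1/2}\sum_{t=1}^n \nabla h_t(\theta_0)\Rightarrow \mathcal{N}_d(0,N_{\theta_0})$, the series defining $N_{\theta_0}$ being absolutely convergent by the same covariance inequality combined with a standard quantile-truncation bound. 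Slutsky's lemma concludes.

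The main obstacle is the CLT step: one must verify that the specific rate $\sum k^{1/(r-1)}\alpha(k)<\infty$ suffices both for absolute summability of $\cov(\nabla h_0(\theta_0),\nabla h_\ell(\theta_0))$ and for the CLT itself under an $L^{2r}$ moment. This is the optimal moment/mixing trade-off of Ibragimov--Rio type, handled by plugging the quantile bound $Q_{\Vert\nabla h_0(\theta_0)\Vert}(u)\leq u^{-1/(2r)}\Vert\nabla h_0(\theta_0)\Vert_{2r}$ into Rio's covariance inequality, which produces a summable bound of the prescribed form. The ULLN steps for consistency and Hessian convergence are routine once the envelope integrability is in place.
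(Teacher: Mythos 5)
Your proposal is correct and follows essentially the same route as the paper, whose proof is only a sketch: it invokes "standard arguments" for consistency and, for asymptotic normality, applies the mean-value expansion together with the CLT for strongly mixing sequences of Doukhan–Massart–Rio (Theorem 1 and Remark 4 of \cite{doukhan1994functional}), whose moment/mixing trade-off is exactly the condition $\E\Vert\nabla h_0(\theta_0)\Vert^{2r}<\infty$ and $\sum_k k^{1/(r-1)}\alpha_{X,Y}(k)<\infty$ that you verify via Rio's covariance inequality. The only point to be slightly careful about is the absolute convergence of $N_{\theta_0}$: the termwise bound $\alpha(\ell)^{1-1/r}$ is not summable in the borderline case, so one should sum the covariance inequality over $\ell$ and interchange sum and integral (i.e.\ use $\int_0^1\alpha^{-1}(u)Q^2(u)\,\dint u<\infty$), which is precisely what the cited reference does.
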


\paragraph{Notes}
\begin{enumerate}
    \item
If $Q=Q_{\theta'}$ for some $\theta'\in\Theta$, it is easy to show that $\theta'$ is an arg max of $\theta\mapsto \E h_0(\theta)$. Under the assumptions of Theorem \ref{param}, we have $\theta'=\theta_0$. Moreover
$$\E\left[\nabla q_t(\theta_0)\vert {\bf Y},X_{t-1}\right]=\int \dot{q}_{\theta_0}(Y_{t-1},X_{t-1},x')\mu(dx')=0,$$
where $\dot{q}_{\theta_0}$ is the gradient vector of $\theta\mapsto h_{\theta}$ at point $\theta_0$. The last equality corresponds to the inversion between derivative and integral. The score $\left(\nabla h_t(\theta_0)\right)_{t\in\Z}$ is then a martingale difference and $N_{\theta_0}$ simplifies to $\var\left(\nabla h_0(\theta_0)\right)$. However, under model miss-specification, these properties are no longer valid, though $\nabla h_t(\theta_0)$ has still zero mean. Then $N_{\theta_0}$ has a more complicated form due to non-vanishing covariances for the first-order derivatives of the mappings 
$\theta\mapsto h_t(\theta)$.
	\item 
Suppose that $\alpha_Y(n)=O\left(n^{-a}\right)$ for some $a>0$. Using Lemma \ref{conc} and Inequality (\ref{eq:trans_mix}) with $m=[n/2]$, we get $\alpha_{X,Y}(n)=O\left(n^{-b+1}\right)$ for any $b\in (0,a)$. We then deduce  that condition $\sum_{k\geq 1}k^{\frac{1}{r-1}}\alpha_{X,Y}(k)<\infty$ holds true as soon as $a>2+\frac{1}{r-1}$.

    \end{enumerate}
\paragraph{Example.} 
Suppose that $Q_{\theta}(y,x,\cdot)$ is the Poisson distribution with intensity $\lambda_{\theta}(y,x)=\theta_1+\theta_2 x+\theta_3^Ty$ where $\theta=(\theta_1,\theta_2,\theta_3)$ is an element of $\R_+\times \R_+\times \R_+^d$ with $\theta_1>0$. The true kernels can correspond to a different count distribution, to a Poisson distribution with a different intensity (e.g. $\lambda(y,x)$ is not linear in $(x,y)$) or even to a Poisson distribution with intensity	
$$\lambda(y',x)=\eta_1+\eta_2 x+\eta_3^T y'$$
but with $y'$ a realization of a process $Y_t'=(Y_t,Z_t)$ taking values in $\R^m\times \R^{m'}$. The latter case corresponds to missing covariates (i.e. the process $(X_t)_{t\in\Z}$ is a MCRE but defined conditionally on $Y'$ and not only $Y$).

\paragraph{Proof of Theorem \ref{param}.}

The proof is based on standard arguments. For asymptotic normality, the regularity assumptions guarantee that $\nabla q_t(\theta_0)$ is integrable and has zero mean.
One can then apply the central limit theorem for $\alpha-$mixing variables to 
the partial sums $U_n:=n^{-1/2}\sum_{t=1}^n \nabla q_t(\theta_0)$. Under our assumptions, the limiting distribution of $U_n$ is Gaussian with mean $0$ and variance matrix $N_{\theta_0}$. See Theorem $1$ in \cite{doukhan1994functional} and remark $4$ just after the statement of the theorem.

	\subsection{Non-parametric kernel regression}
	
	In this section, we consider the non-parametric estimation of the regression function 
	$$r(x,y)=\E\left[X_t\vert X_{t-1}=x,Y_{t-1}= y\right]=\int x' Q(y,x,dx')$$
	for a stationary Markov chain in random environments satisfying our assumptions. In what follows, for $t\in\Z$, we set $Z_t=(X_t,Y_t)$ which is a random vector taking values in $\R\times \R^d$. 
	 Nadaraya-Watson kernel estimators are widely popular for such problems. This estimator, denoted by $\hat{r}$ in what follows, is given by the formula
	$$\hat{r}(x,y)=\frac{\displaystyle\sum_{t=2}^n X_t K\left(\frac{x-X_{t-1}}{h},\frac{y-Y_{t-1}}{h}\right)}{\displaystyle\sum_{t=2}^n K\left(\frac{x-X_{t-1}}{h},\frac{y-Y_{t-1}}{h}\right)},$$
	where $K:\R^{d+1}\rightarrow \R_+$ is a probability density and $h>0$ is a bandwidth parameter. For simplicity, we will assume that the kernel $K$ is Lipschitz, symmetric and has compact support. These assumptions are satisfied by many kernels, such as the triangular kernel or the Epanechnikov kernel.

	Though many contributions considered convergence rates for such estimators, as in \cite{hansen2008uniform}, there do not exist precise assumptions that can be direclty checked on a specific DGP when the predictors contain both lag-values of the response and exogenous covariates. We will fill this gap in this section when the dynamic is given by a MCRE in a stationary and strongly mixing environment.

	We will use the following assumptions. For simplicity of the statements, we limit our results to the long-term contractivity condition for the drift parameters. More general results are possible, using suitable transfer mixing properties from the process $Y$ to the pair $(Y,X)$ such as that given in Lemma \ref{conc}.
	
	\begin{description}
	\item {\bf E1} There exists $s>2$ such that, setting $V(x)=\vert x\vert^s$, for any $y\in\R^d$, 
	$$Q(y)V\leq \gamma(y)V+K(y),$$
	for some mappings $\gamma:\R^d\rightarrow \R_+$ and $K:\R^d\rightarrow [1,\infty)$ such that $\E\left[\gamma(Y_0)+K(Y_0)\right]<\infty$ and 
	$$\limsup_{t\rightarrow \infty}\E^{1/t}\left[\gamma(Y_t)\cdots \gamma(Y_1)K(Y_0)\right]<1.$$
	\item {\bf E2} The minorization condition is satisfied for the function $V$ defined above.
	\item {\bf E3} The environment is strongly mixing with $\alpha_Y(m)\leq C m^{-\beta}$ for some 
	$$\beta>\frac{1+(s-1)(1+d+d/q)}{s-2}\mbox{ and } q>0.$$
	\item {\bf E4}
	The probability distribution of $Z_0$ has a density w.r.t. the Lebesgue measure, denoted by $f_{Z_0}$ and such that 
	$$\sup_{z\in\R^{d+1}}\E\left[1+\vert X_1\vert^s\vert Z_0=z\right]f_{Z_0}(z)<\infty.$$
	\item {\bf E5}
	For any positive integer $j$, the probability distribution of the pair $(Z_0,Z_j)$ admits a density denoted by $f_{Z_0,Z_j}$. Moreover, there exists a positive integer $j^*$ and a positive constant $B$ such that for any integer $j\geq j^{*}$
	$$\sup_{z_0,z_j\in \R^{d+1}}\E\left[\left\vert X_1X_{j+1}\right\vert \vert Z_0=z_0,Z_j=z_j\right]\cdot f_{Z_0,Z_j}(z_0,z_j)\leq B.$$ 
	\end{description}
	
	The following result is a direct consequence of Theorem $8$ in \cite{hansen2008uniform}. 
	
	\begin{theorem}
	\label{cvunif}
Suppose that Assumptions {\bf E1--E5} are satisfied. Suppose furthermore that the second derivatives of $f_{Z_0}$ and $r\times f_{Z_0}$ are bounded and uniformly continuous and that $\inf_{\vert z\vert\leq c}f_{Z_0}(z)>0$ for some $c>0$.
Then we get
$$\sup_{\vert z\vert\leq c}\left\vert \hat{r}(z)-r(z)\right\vert=O_P\left(\left(\frac{\log n}{n}\right)^{\frac{2}{d+5}}\right).$$
	\end{theorem}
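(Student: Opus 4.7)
The plan is to reduce the statement to Theorem 8 of \cite{hansen2008uniform}, a general uniform convergence result for Nadaraya--Watson estimators under polynomial strong mixing, moment, and density regularity assumptions. The task is therefore to verify, for the joint process $Z_t=(X_t,Y_t)_{t\in\Z}$, each hypothesis of that theorem using the structure supplied by \textbf{E1}--\textbf{E5} together with the transfer-of-mixing machinery of Section \ref{sec:trans_mixing}.

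First, I would translate the drift condition \textbf{E1} and the minorization condition \textbf{E2} into a polynomial mixing rate for $(Z_t)$. The long-term contractivity in \textbf{E1} yields geometric decay of the sequence $(d_\ell)_{\ell\ge 1}$, hence geometric decay of $r_i=\sum_{\ell\ge i}d_\ell$. Combined with the polynomial bound $\alpha^Y(n)=O(n^{-\beta})$ from \textbf{E3}, Corollary \ref{cor:A12fromMixing} (with the choice $i=\lfrf{q/2}$ and $q\sim cn/\log n$, as spelled out in the Note following the corollary) gives
\[
\alpha^{X,Y}(n)=O\!\left(n^{-\beta}(\log n)^{\beta}\right).
\]
The lower bound imposed on $\beta$ in \textbf{E3} is calibrated exactly so that this rate, together with the $s$-th moment on the response, satisfies the summability condition required by Hansen's theorem in the polynomial-mixing case, with regressor dimension $d+1$ and tuning parameter $q$.

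Second, I would check the moment and density hypotheses. Lemma \ref{mom} together with \textbf{E1} gives the uniform bound $\sup_{j\ge 0}\E|X_j|^s=\sup_{j\ge 0}\E V(X_j)<\infty$, which supplies the required moment on the response. Assumptions \textbf{E4} and \textbf{E5} reproduce verbatim Hansen's pointwise bounds on the conditional moments $\E[1+|X_1|^s\mid Z_0=z]f_{Z_0}(z)$ and on the joint conditional mixed moments $\E[|X_1 X_{j+1}|\mid Z_0=z_0,Z_j=z_j]f_{Z_0,Z_j}(z_0,z_j)$ needed to control bias and variance in the kernel expansion. The bounded and uniformly continuous second derivatives of $f_{Z_0}$ and $rf_{Z_0}$, together with the strict positivity $\inf_{|z|\le c}f_{Z_0}(z)>0$ and the Lipschitz, symmetric, compactly supported kernel $K$, supply the remaining regularity hypotheses of that theorem.

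Once all hypotheses are in place, applying \cite[Theorem 8]{hansen2008uniform} with the bandwidth choice $h\sim(\log n/n)^{1/(d+5)}$, which is the standard optimal bandwidth for second-order smooth kernel regression in dimension $d+1$, yields the claimed rate. The main obstacle is arithmetic bookkeeping rather than genuine difficulty: one must carefully match the exponent $\beta$ of \textbf{E3} to the precise decay of $\alpha^{X,Y}$ obtained from Corollary \ref{cor:A12fromMixing} (being careful about the additional $(\log n)^\beta$ factor), and confirm that the specific threshold $\beta>\{1+(s-1)(1+d+d/q)\}/(s-2)$ is exactly what Hansen's hypotheses require. Everything else is a direct translation of \textbf{E4}--\textbf{E5} into Hansen's notation.
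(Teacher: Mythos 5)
Your proposal is correct and follows essentially the same route as the paper: both reduce the statement to Theorem~8 of Hansen (2008) by deriving a polynomial mixing rate for $(X_t,Y_t)$ from \textbf{E1}--\textbf{E3} via the transfer-of-mixing results of Section~\ref{sec:trans_mixing}, obtaining the moment bound from the drift condition, and matching \textbf{E4}--\textbf{E5} to Hansen's density/conditional-moment hypotheses. The only cosmetic difference is that the paper states the transferred rate as $O(m^{-\kappa\beta})$ for $\kappa$ arbitrarily close to $1$ while you keep the sharper $O(n^{-\beta}\log^{\beta}n)$ form; both verify Hansen's condition (10) because of the strict inequality imposed on $\beta$ in \textbf{E3}.
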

	
\paragraph{Proof of Theorem \ref{cvunif}.}

It is simply needed to check Assumption $2$ and condition $(10)$ of Theorem $8$ given in \cite{hansen2008uniform}. First, Assumptions {\bf E1} and {\bf E2} guaranty the existence of a unique stationary solution $(X_t)_{t\in\Z}$ such that $\E\vert X_0\vert^s<\infty$. Assumption {\bf E3} guarantees that $\alpha_{X,Y}(m)=O\left(m^{-\kappa \beta}\right)$ for any $\kappa\in (0,1)$. Choosing $\kappa$ close to $1$, we obtain that $\kappa \beta > \frac{1+(s-1)(1+d+d/q)}{s-2}$ which is precisely condition $(10)$ in \cite{hansen2008uniform}. Conditions $(5),(6)$ and $(7)$ given in Assumption $2$ of  \cite{hansen2008uniform} follows directly from {\bf E4-E5}.$\square$

Checking Assumptions {\bf E4-E5} as well as other regularity conditions on marginal densities will depend on specific properties of the transition kernels $Q(y,\cdot,\cdot)$. It is difficult to provide general guidelines. 
For simplicity, we will study more specifically the model
	\begin{equation}\label{autoreg}
	X_t=r\left(X_{t-1},Y_{t-1}\right)+\sigma\left(X_{t-1},Y_{t-1}\right)\varepsilon_t,\quad t\in\Z,
	\end{equation}
	when the $\varepsilon_t's$ are i.i.d., centered and mutually independent from the $Y_s$'s.
	
For this specific model, we will give sufficient conditions for getting {\bf E1-E2-E4-E5}. Our aim is to only make assumptions on the transition kernels and the environment. We will use the following assumptions.

\begin{description}
\item{\bf F1}
The volatility parameter satisfies
	$$0<\sigma_{-}:=\inf_{(x,y)\in \R^{d+1}}\sigma(x,y)\leq \sigma_+:=\sup_{(x,y)\in\R^{d+1}}\sigma(x,y)<\infty.$$
\item{\bf F2}
$\varepsilon_0$ has a probability distribution with a continuous density, positive everywhere and denoted by $f_{\varepsilon}$. Moreover there exists a $s'>2$ such that
$$\E\vert\varepsilon_0\vert^{s'}<\infty,\quad \sup_{u\in \R}\left(1+\vert u\vert^{s'}\right)f_{\varepsilon}(u)<\infty.$$
\item{\bf F3}	
	The following sub-linearity condition holds true, 
	$$\left\vert r(x,y)\right\vert\leq a(y)\vert x\vert+b(y),$$
	where $a,b:\R^d\rightarrow \R_+$ are measurable functions such that $b\geq 1$. Moreover, for some $s>2$, we have 
	$$\E\left[a(Y_t)^{s'}\cdots a(Y_1)^{s'} b(Y_0)^{s'}\right]\leq C\kappa^t,$$
	for a pair of constants $(C,\kappa)\in (0,\infty)\times (0,1)$.
\item{\bf F4}
The distribution of $Y_t$ conditionally on $(Y_{t-s})_{s\geq 1}$ has a probability density denoted by $g_t$ and which is uniformly bounded, i.e. there exists $M>0$ such that $\sup_{y\in\R^d}g_t(y)\leq M$ a.s. Moreover, setting 
$$\alpha_j=a(Y_j)+a(Y_j)b(Y_{j-1})+\cdots+a(Y_j)\cdots a(Y_2)b(Y_1),$$
$$\beta_j=a(Y_j)\cdots a(Y_1),\quad K_j=b(Y_j)b(Y_0)+\alpha_j b(Y_0)+\beta_j b(Y_0)^2,$$
$$L_j=b(Y_j)a(Y_0)+\alpha_j a(Y_0),\quad M_j=\beta_j a(Y_0)^2,$$
$$p_j(y)=\E\left[K_j\vert Y_0=y\right],\quad q_j(y)=\E\left[L_j\vert Y_0=y\right],\quad r_j(y)=\E\left[M_j\vert Y_0=y\right],$$
we have 
\begin{equation}\label{cond1}
\sup_{j\geq 3}\sup_{y\in\R^d}\left\{p_j(y)\E\left[g_0(y)\right]+\E\left[\left(p_j(y)g_0(y)\right)^{\frac{s'}{s'-1}}\right]+\E\left[\left(r_j(y)g_0(y)\right)^{\frac{s'}{s'-2}}\right]\right\}<\infty.
\end{equation}
Moreover, there exists $s\in (2,s')$ such that
\begin{equation}\label{cond2}
\sup_{y\in\R^d}\left\{ b(y)^s E\left[g_0(y)\right]+\E\left[\left(a(y)^sg_0(y)\right)^{\frac{s'}{s'-s}}\right]\right\}<\infty.
\end{equation}
\end{description}

\begin{proposition}\label{conseq}
 Assumptions {\bf F1-F2-F3-F4} entail Assumptions {\bf E1-E2-E4-E5}.
\end{proposition}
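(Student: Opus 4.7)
The proof treats E1--E2 (one-step drift and minorization of the autoregressive kernel) by direct estimates, and then turns to the density conditions E4--E5 using repeated conditioning on the past. Throughout I exploit the explicit conditional density of $X_0$ given $(X_{-1},Y_{-1})$, namely $\sigma(X_{-1},Y_{-1})^{-1}f_\varepsilon\!\left(\tfrac{x-r(X_{-1},Y_{-1})}{\sigma(X_{-1},Y_{-1})}\right)$, and the independence of $(\varepsilon_t)_t$ from the $Y$-sequence.

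\emph{E1 and E2.} For E1, I bound $[Q(y)V](x)=\E|r(x,y)+\sigma(x,y)\varepsilon|^s$ with $V(x)=|x|^s$ using the convexity inequality $(a+b)^s\le(1+\eta)^{s-1}a^s+(1+\eta^{-1})^{s-1}b^s$ applied twice, yielding the drift with $\gamma(y)=(1+\eta)^{s-1}a(y)^s$ and $K(y)=C_\eta(b(y)^s+1)$ (finiteness of $\E|\varepsilon|^s$ follows from F2 since $s<s'$). For the contractivity condition, Jensen with exponent $s/s'\in(0,1)$ applied to F3 gives $\E[a(Y_t)^s\cdots a(Y_1)^s b(Y_0)^s]\le C^{s/s'}\kappa^{ts/s'}$, whence $\limsup_t\E^{1/t}[\gamma(Y_t)\cdots\gamma(Y_1)K(Y_0)]\le(1+\eta)^{s-1}\kappa^{s/s'}<1$ for $\eta$ sufficiently small. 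For E2, fix $R>0$ and some $L>0$; for $|x|\le R^{1/s}$ and $|v|\le L$, F1--F2 yield $Q(y,x,\mathrm{d}v)\ge\sigma_+^{-1}\inf_{|u|\le(L+a(y)R^{1/s}+b(y))/\sigma_-}f_\varepsilon(u)\,\mathbf{1}_{[-L,L]}(v)\,\mathrm{d}v$, which gives the minorization with $\kappa_R$ the uniform measure on $[-L,L]$ and $\beta(R,y)<1$ for all $y$; Assumption A2 follows by dominated convergence.

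\emph{E4.} The key pointwise estimate is $|x_0|^s f_{X_0\mid X_{-1},Y_{-1}}(x_0)\le C\bigl(1+|r(X_{-1},Y_{-1})|^s\bigr)\le C'\bigl(1+a(Y_{-1})^s|X_{-1}|^s+b(Y_{-1})^s\bigr)$, obtained from $\sup_u(1+|u|^{s'})f_\varepsilon(u)<\infty$ (F2) and $s<s'$. Combined with the factorization $f_{Z_0}(x_0,y_0)=\E[f_{X_0\mid X_{-1},Y_{-1}}(x_0)\,g_0(y_0)]$ (valid because $(\varepsilon_t)\perp(Y_t)$, so $X_0$ and $Y_0$ are conditionally independent given the $Y$-past) and the straightforward bound $\E[1+|X_1|^s\mid Z_0=z]\le C(1+a(y_0)^s|x_0|^s+b(y_0)^s)$, E4 reduces to bounding $\sup_{x_0,y_0}(1+a(y_0)^s|x_0|^s+b(y_0)^s)f_{Z_0}(x_0,y_0)$. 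The constant and $b(y_0)^s$ pieces follow from $f_{Z_0}(x_0,y_0)\le(\|f_\varepsilon\|_\infty/\sigma_-)f_{Y_0}(y_0)$ together with the first term of \eqref{cond2}. The $a(y_0)^s|x_0|^s$ piece reduces, via the displayed estimate, to bounding $a(y_0)^s\E[|X_{-1}|^s a(Y_{-1})^s g_0(y_0)]$; H\"older with conjugate exponents $s'/s$ and $s'/(s'-s)$ produces factors $(\E|X_{-1}|^{s'})^{s/s'}<\infty$ (finite because the E1 drift computation, carried out at exponent $s'$, still satisfies the long-term contractivity built from F3) and a second factor which, multiplied by $a(y_0)^s$, is controlled uniformly in $y_0$ by the second term of \eqref{cond2}.

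\emph{E5 is the main obstacle.} Expanding $X_1$ and $X_{j+1}$ and using independence of $\varepsilon_1,\varepsilon_{j+1}$ from $(X_0,Y_0,X_j,Y_j)$ gives $\E[|X_1 X_{j+1}|\mid Z_0=z_0,Z_j=z_j]\le C(a(y_0)|x_0|+b(y_0)+1)(a(y_j)|x_j|+b(y_j)+1)$, so the quantity to control decomposes into a constant group, two groups linear in $|x_0|$ or $|x_j|$, and a bilinear term $|x_0x_j|$. For the pair density I would propagate $X_{-1}$ forward using the affine recursion $|X_{k+1}|\le a(Y_k)|X_k|+b(Y_k)+\sigma_+|\varepsilon_{k+1}|$; after integration, the pointwise bound on $|x_0|^\alpha|x_j|^\beta f_{Z_0,Z_j}$ for $(\alpha,\beta)\in\{0,1\}^2$ involves (besides $|X_{-1}|^{\alpha+\beta}$) precisely the random coefficients $K_j$, $L_j$, $M_j$ from F4, corresponding to the constant, linear, and quadratic parts in $|X_{-1}|$. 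Taking conditional expectations given $Y_0$ yields $p_j,q_j,r_j$. H\"older with exponents $s'/(s'-1)$ and $s'/(s'-2)$ (paired with $\E|X_{-1}|^{s'}<\infty$, available once the drift is lifted to exponent $s'$ as above) then matches the three types of cross-terms to the three supremum bounds in \eqref{cond1}. The combinatorial bookkeeping of matching the nine expanded cross-terms to the three integrability quantities in F4, and verifying that the $j\ge 3$ range in \eqref{cond1} suffices as the threshold $j^*$ in E5, is the technical heart of the argument.
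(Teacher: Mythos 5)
Your treatment of E1, E2 and E4 follows the paper's proof essentially verbatim: the same convexity inequality produces the drift (the paper simply takes $V(x)=|x|^{s'}$ from the start rather than lifting the exponent later, which is immaterial), the minorization comes from positivity and continuity of $f_\varepsilon$ on compacts exactly as in the paper, and for E4 the factorization $f_{Z_0}(x_0,y_0)=\E[f_0(x_0)g_0(y_0)]$, the pointwise bound $|x_0|^s f_0(x_0)\le C\left(1+\E\left[|r(X_{-1},Y_{-1})|^s\mid Y_0^{-}\right]\right)$ coming from $\sup_u(1+|u|^{s'})f_\varepsilon(u)<\infty$, and the H\"older step against \eqref{cond2} are precisely the paper's argument.

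For E5 there is a genuine problem with your opening step, and the remainder is only a sketch. The claimed bound $\E[|X_1X_{j+1}|\mid Z_0=z_0,Z_j=z_j]\le C(a(y_0)|x_0|+b(y_0)+1)(a(y_j)|x_j|+b(y_j)+1)$ is not justified: while $\varepsilon_{j+1}$ is indeed independent of $(Z_0,X_1,Z_j)$, the innovation $\varepsilon_1$ is \emph{not} independent of $X_j$ (for $j\ge 1$, $X_j$ is a measurable function of $X_0$, $Y_{0:j-1}$ and $\varepsilon_{1:j}$), so conditioning on the future state $Z_j$ reweights the conditional law of $X_1$, and $\E[|X_1|\mid Z_0,Z_j]$ need not be bounded by $a(y_0)|x_0|+b(y_0)+C$. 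The paper avoids this by never forming that conditional expectation: it bounds the unnormalized quantity $\E[|X_1X_{j+1}|\mid Z_0,Z_j]\,\ell_j(z_0,z_j)$ directly as an integral against the path densities, absorbs $|x_j|$ into the last transition density via $\sup_u|u|f_\varepsilon(u)<\infty$, and then propagates the resulting conditional moment of $|X_{j-1}|$ back to $x_1$ and finally to $x_0$; this is exactly how the coefficients $\alpha_j,\beta_j$ and then $K_j,L_j,M_j$ arise, and the final bound \eqref{eff} is constant, linear and quadratic in $|x_0|$ only — all $x_j$-dependence disappears, so the ``nine cross-terms'' organization suggested by your factorization does not reflect the actual structure. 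Your target object ($|x_0|^\alpha|x_j|^\beta$ times the pair density, bounded pointwise) and your identification of the H\"older exponents $s'/(s'-1)$ and $s'/(s'-2)$ paired with \eqref{cond1} are correct, but the propagation computation you defer as ``the technical heart'' is precisely the content of the paper's proof of E5, so as written that part of the argument is incomplete.
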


Let us comment on these assumptions. 
\begin{enumerate}
    \item 
    Assumption {\bf F1} means that the conditional variance 
    $$\var\left(X_t\vert X_{t-j},Y_{t-j};j\geq 1\right)=\sigma^2\left(X_{t-1},Y_{t-1}\right)$$
    is lower and upper bounded. Assuming such a lower bound is classical. The existence upper bound is only used to simplify the statement of other assumptions but it can be removed.
    \item
    Assumption {\bf F2}  requires positiveness and continuity of $f_{\varepsilon}$, which are classical conditions for studying autoregressive models with Markov chain techniques. In particular, under these conditions, any compact interval is a small set. We also assume that this density is bounded and satisfies additional growth and moment conditions.
    \item
    Assumption {\bf F3} imposes a sub-linear growth of the regression function with coefficients depending on the environment as well as a long-time contractivity for getting existence of a stationary MCRE and with a moment of order $s'>2$.
    \item
    Assumption {\bf F4} is more difficult to check. The existence of a uniformly bounded conditional density is satisfied for instance for any stationary and non-anticipative solution of the recursions
    $$Y_t=m\left(Y_{t-1},Y_{t-2},\ldots\right)+v\left(Y_{t-1},Y_{t-2},\ldots\right)\eta_t,$$
    where the $\eta_t'$s are i.i.d. with an absolutely continuous probability distribution with a bounded density and a lower bounded volatility function $v$. Such a model specification covers many models used in Econmetrics, ARMA and GARCH models among other. The two other conditions (\ref{cond1}) and (\ref{cond2}) are more technical to check. They are satisfied when $a(y)\leq a\in (0,1)$ and $b(y)\leq b\in (0,\infty)$ for all $y\in\R^d$.
   When the mapping $b(\cdot)$ is not necessarily bounded, a control of $\E\left[b(Y_j)\vert Y_0=y\right]$ is needed. 
   \item
    Under our assumptions, one can show that $f_{Z_0}(z)=\E\left[f_0(x)g_0(y)\right]$, where $f_0$ is the conditional density of $X_0$ given $(Y_s)_{s\leq -1}$ and $g_0$ is defined in {\bf F4}. As shown in the proof of Proposition \ref{conseq}, $f_0$ is continuous a.s. and  $f_0(x)>0$ a.s. for any $x\in\R$.  If we assume that $g_0$ is continuous a.s. and that for any point $y\in \R^d$ such that $\vert y\vert\leq R$, $g_0(y)$ is not equal to zero a.s., then for any $e>0$, $\inf_{\vert x\vert\leq e,\vert y\vert\leq R}f_{Z_0}(x,y)>0$,  a condition which is required for applying Theorem \ref{cvunif}. Additional smoothness conditions on $f_{Z_0}$, as required in the statement of Theorem \ref{cvunif}, can be obtain from that on $f_0$ and $g_0$ and differentiability of $f_0$ can be obtained directly from differentiability properties of $f_{\varepsilon}$, in addition to suitable moment conditions. For simplicity we do not provide additional specific conditions.
    \end{enumerate}

\paragraph{Proof of Proposition \ref{conseq}.}

Setting for $(x,x',y)\in \R^{d+2}$,
	$$f(y,x',x)=\frac{1}{\sigma(x',y)}f_{\varepsilon}\left(\frac{x-r(x',y)}{\sigma(x',y)}\right),$$
	we get 
	$$Q(y,x,A)=\int_A f(y,x,x')dx',\quad (y,x,A)\in \R^d\times\R\times\mathcal{B}(\R).$$

Let us first check {\bf E1-E2}. Setting $\gamma(y)=(1+\epsilon)^{s'-1}a(y)^{s'}$, where $\epsilon>0$ is such that $\widetilde{\kappa}:=(1+\epsilon)^{s'-1}\kappa<1$, and $V(x)=\vert x\vert^{s'}$, we get
	$$Q(y)V\leq \gamma(y)V+K(y),$$
	with 
	$$K(y)=\left(\frac{1+\epsilon}{\epsilon}\right)^{s'-1}\E\left(b(y)+\sigma_{+}\vert\varepsilon_1\vert\right)^{s'}$$
	and it is straightforward to show that {\bf E1} is verified. The minorization condition {\bf E2} is also satisfied since if $\vert x'\vert\leq R^{1/{s'}}$, we have
	$$Q(y,x',A)\geq \sigma_{+}^{-1}\inf\left\{f_{\varepsilon}(u): \vert u\vert \leq \frac{1+\sup_{\vert w\vert\leq R^{1/s'}}\left\vert r(w,y)\right\vert}{\sigma_{-}}\right\}\cdot\lambda(A),$$
	where $\lambda$ denotes the Lebesgue measure over $[0,1]$. We deduce {\bf E2} from positivity and continuity of $f_{\varepsilon}$ which ensure that this density is lower bounded by a positive constant on each compact interval.
	
	We next check {\bf E4}. We recall that for a stationary solution obtained from our assumptions, the exists a randomly invariant sequence of random measures $\left(\pi_t\right)_{t\in\Z}$ such that $\pi_t=\pi_{t-1}P_{t-1}$ where we set 
	$P_t(x,A)=Q\left(Y_t,x,A\right)$. Moreover, $\pi_t$ is measurable w.r.t. $\sigma\left(Y_s:s\leq t-1\right)$. Note that $\pi_t$ has automatically a density w.r.t. the Lebesgue measure. This random density denoted by $f_t$ is given by 
	$$f_t(x)=\int \pi_{t-1}(dx')f(Y_{t-1},x',x).$$
	Note that the assumptions on $f_{\varepsilon}$ and $\sigma$ guaranty that $f_t$ is continuous and positive a.s.
	We have the expression $f_{Z_t}(x,y)=\E\left(f_{t-1}(x)g_{t-1}(y)\right)$,
        where $g_t$ denotes the density of $Y_t$ given $(Y_{t-s})_{s\geq 1}$.
        Since $\Vert f_{t-1}\Vert_{\infty}\leq \Vert f_{\varepsilon}\Vert_{\infty}/\sigma_{-}$ and $g_{t-1}$ is bounded from {\bf A4}, the density of $Z_t$ is automatically bounded. The other condition to check is 
        $$\sup_{z\in \R^{d+1}}\E\left[\vert X_1\vert^s\vert Z_0=z\right]f_{Z_0}(z)<\infty,$$
        for some $s>2$. Using our sub-linearity condition, it is sufficient to show that
        $$\sup_{(x,y)\in\R^{d+1}}\left\{a(y)^s\vert x\vert^s+b(y)^s\right\}\E\left[f_0(x)g_0(y)\right]\infty.$$
It is possible to show that there exists a positive constant $C$ such that 
$$\sup_{x\in\R}\vert x\vert^sf_0(x)\leq C\left(1+\E\left[\left\vert r(X_{-1},Y_{-1}\right\vert^s\vert Y_{-1},Y_0,\ldots\right]\right).$$
Moreover
$$\E\left[\left\vert r(X_{-1},Y_{-1})\right\vert^sg_0(y)\right]\leq \E^{s/s'}\left[\left\vert r(X_{-1},Y_{-1})\right\vert^{s'}\right]\cdot \E^{\frac{s'-s}{s'}}\left[g_0(y)^{\frac{s'}{s'-s}}\right].$$
The required condition then follows from (\ref{cond2}).

Finally, we have to check {\bf E5}. Set
$$U:=\sup_{j\geq 3}\sup_{(z_0,z_j)}\E\left[\left\vert X_{j+1}X_1\right\vert \vert Z_0=z_0,Z_j=z_j\right]\ell_j(z_0,z_j)<\infty,$$
where $\ell_j$ denotes the density of the pair $(Z_0,Z_j)$. We are going to check the bound
\begin{equation}\label{eff}
U\leq C\sup_{j\geq 3}\sup_{(x_0,y_0)\in \R^{d+1}}\left\{p_j(y_0)+q_j(y_0)\vert x_0\vert+r_j(y_0)x_0^2\right\}f_{Z_0}(x_0,y_0),
\end{equation}
where $C$ denote a positive constant and the mappings $p_j,q_j,r_j$ are defined in {\bf F4}. To prove (\ref{eff}), we simplify our notations. Then, for $z_j=(x_j,y_j)$, we have
$$\E\left[\left\vert X_{j+1}X_1\right\vert \vert Z_0=z_0,Z_j=z_j\right]=
L(z_j)\E\left[X_1\Vert Z_0=z_0,Z_j=z_j\right],$$
where $L(z_j)=\E\left[\left\vert X_{j+1}\right\vert \vert Z_j=z_j\right] X_0$
 We denote by $g(\cdot\vert y_j^{-})$ the density of $Y_j$ conditionally on $Y_s=y_s$ for $s\leq j-1$, which is uniformly bounded from {\bf F4}, and 
by $f(\cdot\vert y_0^{-})$ the density of $X_0$ conditionally on $Y_s=y_s$ for $s\leq 0$. Moreover, we denote by $f_j(y_{1:j-1};x_1,x_j)$ the density of $(X_1,X_j)$ conditionally on $Y=y$, evaluated at point $(x_1,x_j)$. 
For $z_0=(x_0,y_0)$ and $z_j=(x_j,y_j)$, we also set 
$$I(y_{1:j},y_0^{-},x_j)=\int \vert x_1\vert L(z_j)f_j\left(y_{1:j-1};x_1,x_j\right)f(y_0,x_0,x_1)f\left(x_0\vert y_0^{-}\right)dx_1.$$
If $\mu$ denotes the probability measure of $Y_0^{-}=(Y_{-1},Y_{-2},\ldots)$, we
have 
\begin{eqnarray*}
&&\E\left[\left\vert X_{j+1}X_1\right\vert \vert Z_0=z_0,Z_j=z_j\right]\ell_j(z_0,z_j)\\
&=&\int I(y_{1:j},y_0^{-},x_j)_0,x_0,x_1)\prod_{i=0}^j
g(y_i\vert y_i^{-})d y_1\cdots dy_{j-1}\mu(d y_0^{-}).
\end{eqnarray*}
To simplify our notations, for a fixed $y\in (\R^d)^{\Z}$ and $j\in \N$, we simply denote $a(y_j),b(y_j)$ by $a_j,b_j$. Setting $\overline{b}_j=b_j+\E\vert \varepsilon_0\vert$, we have $\overline{b}_j\leq C b_j$. Moreover, 
$$L(z_j)\leq a_j\vert x_j\vert+\overline{b}_j.$$
Moreover,
\begin{eqnarray*}
I(y_{1:j},y_0^{-},x_j)_0,x_0,x_1)&\leq& a_j \int \vert x_1 x_j\vert f_j(y_1,\ldots,y_{j-1};x_1,x_j)f(y_0,x_0,x_1)f\left(x_0\vert y_0^{-}\right)dx_1\\
&+& \overline{b}_j\int \vert x_1\vert f_j(y_{1:j-1};x_1,x_j)f(y_0,x_0,x_1)f\left(x_0\vert y_0^{-}\right)dx_1\\
&:=& A+B
\end{eqnarray*}
We have the bound 
\begin{eqnarray*}
\vert x_j\vert f_j\left(y_{1:j-1};x_1,x_j\right)&=& \int \vert x_j\vert f(y_{j-1},x_{j-1},x_j)f_{j-1}\left(y_{1:j-2};x_1,x_{j-1}\right)dx_{j-1}\\
&\leq& C\left\{1+\int r(x_{j-1},y_{j-1})f_{j-1}\left(y_{1:j-2};x_1,x_{j-1}\right)dx_{j-1}\right\}\\
&=& C\left\{a_{j-1}\E\left[\left\vert X_{j-1}\right\vert\vert Y=y,X_1=x_1\right]+b_{j-1}\right\},
\end{eqnarray*}
where we used the fact that $\sup_{u\in\R}\vert u\vert f_{\varepsilon}(u)<\infty$ from {\bf F2}. We also use the bound
$$\E\left[\left\vert X_{j-1}\right\vert\vert Y=y,X_1=x_1\right]\leq \beta_{j-2}\vert x_1\vert+d_{j-2},$$
where
$$d_{j-2}=b_{j-2}+\sum_{s=2}^{j-2}a_{j-2}\cdots a_{j-s}b_{j-s-1}.$$
We then obtain
\begin{eqnarray*}
A&\leq& C\int \vert x_1\vert\left[a_j+a_j b_{j-1}+\beta_j\vert x_1\vert+a_j a_{j-1}d_{j-2}\right]f(y_0,x_0,x_1)dx_1f_0\left(x_0\vert y_0^{-}\right)\\
&\leq& C\int \left(\alpha_j\vert x_1\vert+\beta_j x_1^2\right)f(y_0,x_0,x_1)d x_1f\left(x_0\vert y_0^{-}\right).
\end{eqnarray*}
Using the inequality
$$\E\left[X_1^2\vert Y=y,X_0=x_0\right]\leq 2 a_0^2 x_0^2+\overline{b}_0^2,$$
we get
$$A\leq Cf\left(x_0\vert y_0^{-}\right)\left(\alpha_j b_0+\beta_jb_0^2+\alpha_j a_0\vert x_0\vert+\beta_j a_0^2 x_0^2\right).$$
Moreover using the fact that $f_j$ is uniformly bounded by $\sup_{u\in\R}f_{\varepsilon}(u)/\sigma^{-}$, we get
$$B\leq C b_j L(z_0)f\left(x_0\vert y_0^{-}\right).$$
Collecting all the previous bounds, and using the expression 
$$f_{Z_0}(x_0,y_0)=\int f\left(x_0\vert y_0^{-}\right)g\left(y_0\vert y_0^{-}\right)\mu(d y_0^{-}),$$
we get (\ref{eff}). Using the fact that $f\left(x_0\vert y_0^{-}\right)$ is uniformly bounded, the inequalities
$$\vert x_0\vert^i f\left(x_0\vert y_0^{-}\right)\leq C\left(1+\E\left[\left\vert r(X_{-1},Y_{-1})\right\vert^i \vert Y_0^{-}=y_0^{-}\right]\right)$$
for $i=1,2$, {\bf E5} follows from Holder's inequality and (\ref{cond1}). The proof is now complete.$\square$

	\section{Stochastic gradient Langevin algorithm}\label{sec:langevin}
	
	We consider a recursive stochastic algorithm known as \textit{Stochastic Gradient Langevin Dynamics (SGLD)}, originally proposed by Welling and Teh~\cite{welling2011bayesian}, and defined by the iteration
	\begin{align}\label{eq:SGLDit}
		\begin{split}
		X_{n+1} &= X_n - \lambda H(X_n, Y_n) + \sqrt{\frac{2\lambda}{\beta}} \, \xi_{n+1},\quad n\ge 0,
		\end{split}
	\end{align}
	where $H : \mathbb{R}^d \times \mathbb{R}^m \to \mathbb{R}^d$ is called update function, $\lambda > 0$ is the step size, and $\beta>0$ is the so-called inverse temperature parameter. Furthermore, $(\xi_n)_{n \geq 1}$ is an i.i.d. sequence of standard $d$-dimensional Gaussian random variables, and $(Y_n)_{n \in \N}$ is an $\mathbb{R}^m$-valued process that models the data stream fed into the algorithm. We assume that the processes $(\xi_n)_{n \geq 1}$ and $(Y_n)_{n \in\N}$ are independent of each other, and $X_0=x_0\in\R^d$ is a deterministic initial value.
	
	The SGLD algorithm and its variants have demonstrated remarkable efficiency in finding global minima of potentially complex high-dimensional objective functions provided suitable regularity conditions hold for the gradient (see Welling and Teh \cite{welling2011bayesian}, and Raginski et al. \cite{raginsky} among others). The fundamental idea behind SGLD is that the problem of finding the global minimum of a possibly non-convex objective function, denoted by $U$, is intrinsically linked to the task of sampling from a target distribution characterized by a density proportional to $e^{-\beta U(x)}$, which is concentrated to the global minimum of $U$ when $\beta>0$ is large. 
	
	More precisely, suppose that $(Y_n)_{n \in \mathbb{Z}}$ is a stationary process and $H$ is an unbiased estimator of the gradient of a differentiable function $U:\mathbb{R}^d \to \mathbb{R}_+$, that is,  
\[
\mathbb{E}[H(x, Y_0)] = \nabla U(x), \quad x \in \mathbb{R}^d.
\]  
Under suitable regularity assumptions, for sufficiently small step size $\lambda$ and large $n$, the distribution of $X_n$ is expected to approximate closely the probability measure
\begin{equation}\label{eq:pibeta}
  \pi_{\beta}(A) \propto \int_A e^{-\beta U(x)} \, dx, \quad A \in \mathcal{B}(\mathbb{R}^d),
\end{equation}
which coincides with the invariant distribution of the Langevin stochastic differential equation
\begin{equation}\label{eq:LangevinSDE}
  d X_t = -\nabla U(X_t)\, dt + \sqrt{2\beta^{-1}}\, dB_t, \qquad t \ge 0,
\end{equation}
as $t \to \infty$, where $(B_t)_{t \ge 0}$ denotes the standard Brownian motion.

Consequently, if $U$ has a unique minimum at $x^* \in \mathbb{R}^d$, for large $\beta$, we can expect that $\mathbb{E}[X_n] \approx x^*$, provided $n$ is sufficiently large and $\lambda$ is small enough. This concept is discussed in details, for instance, in \cite{welling2011bayesian,6}. When the gradient is not estimated, the SGLD iteration \eqref{eq:SGLDit} coincides with the Euler--Maruyama discretization of the Langevin SDE \eqref{eq:LangevinSDE}. Consequently, even if $\law{X_n}$ converges to a limiting distribution as $n \to \infty$, due to the discretization error this limit, denoted by $\pi_{\beta,\lambda}$, differs from $\pi_{\beta}$. The bias introduced by the discretization in the Wasserstein-1 distance is of order $O(\sqrt{\lambda})$ under suitable conditions, as detailed in \cite{5author}.
	
The literature on SGLD is extensive but most of the available studies, assume that $Y_n$, $n\in\Z$ are i.i.d. However, this does not hold true in several applications, prominently in the case of financial times series, processing natural languages or when the data come from sensors.
	
	The first work that explicitly deals with non-i.i.d. data streams $(Y_n)_{n \in \mathbb{N}}$ is Section 3.1 of \cite{6}. It only assumes that $(Y_n)_{n \in \mathbb{N}}$ has the L-mixing property. Under the assumption that $U$ is continuously differentiable, $\nabla U$ is Lipschitz, and $U$ is strongly concave, the authors obtained an upper bound for the Wasserstein-2 distance between the law of the iterates and the target distribution, with constants depending explicitly on the Lipschitz and strong convexity constants of the potential, as well as the dimension of the space.
	
	The article \cite{5author}, similar to \cite{6}, examines the convergence of the distribution of iterates under the assumption that the data stream $(Y_n)_{n \in \mathbb{N}}$ is conditional L-mixing. 
	The main result of this paper is that, instead of assuming a strong convexity condition, it imposes the following more weaker dissipativity condition:  
	\begin{equation}\label{eq:dissipativity}
		\langle x, H(x, y) \rangle \geq \Delta \|x\|^2 - b,
	\end{equation}  
	where $\Delta, b > 0$ are fixed constants. Authors proved that $\law{X_n}$ converges to the invariant distribution $\pi_{\beta, \lambda}$ in the Wasserstein-1 metric at an exponential rate. Furthermore, as previously mentioned, they established that the distance between $\pi_{\beta}$ and $\pi_{\beta, \lambda}$ in the $W_1$ metric can be bounded by the square root of the step size.
	
	In situations where the random variables $Y_{n}$, $n \in \mathbb{Z}$, are independent, $(X_{n})_{n \in \mathbb{N}}$ is a Markov chain. The article \cite{lovas} was the first to point out that, in the more general scenario of dependent data streams, $(X_{n})_{n \in \mathbb{N}}$ can still be treated as a Markov chain \emph{in a random environment}. Easily seen that the parametric kernel 
	\begin{equation}\label{eq:SGLD_Q}
		Q(y, x, A) = \mathbb{P} \left(x - \lambda H(x, y) + \sqrt{\frac{2\lambda}{\beta}} \xi_0 \in A \right)
	\end{equation}  
	corresponds to the iteration \eqref{eq:SGLDit},
	where $\xi_0$, as in the recursion \eqref{eq:SGLDit}, is a standard $d$-dimensional Gaussian random variable. 
	
	In \cite{lovas}, the authors analyzed the SGLD iteration in the context of strongly stationary and bounded data streams $(Y_n)_{n \in \mathbb{Z}}$. They assumed that $H$ grows at most linearly, $\Delta: \mathbb{R}^m \to \mathbb{R}$ is a bounded and measurable function in the dissipativity condition \eqref{eq:dissipativity}, and that  
	\begin{equation}\label{eq:GEcond}
	\Gamma(\alpha) := \lim_{n \to \infty} \frac{1}{n} \ln \mathbb{E} e^{\alpha (\Delta(Y_1) + \dots + \Delta(Y_n))}
	\end{equation}
	exists for all $\alpha \in (-\eta, \eta)$ for some $\eta > 0$, with $\Gamma$ being continuously differentiable on $(-\eta, \eta)$.  
	Under these assumptions, the total variation distance satisfies the bound  
	\begin{equation}\label{eq:SGLD:oldrate}
	\dtv (\mathcal{L}(X_n), \pi_{\beta, \lambda}) \leq c_1 e^{-c_2 n^{1/3}},
	\end{equation}
	where $c_1, c_2 > 0$ are appropriate constants. Moreover, when $(Y_n)_{n \in \mathbb{N}}$ is ergodic, for any bounded and measurable function $\Phi: \mathbb{R}^d \to \mathbb{R}$, the sequence $(\Phi(X_n))_{n \in \mathbb{N}}$ satisfies the law of large numbers in the $L^p$ sense, where $1 \leq p < \infty$.
	Rásonyi and Tikosi, in \cite{rasonyi2022stability}, also studied the iteration \eqref{eq:SGLDit} with merely stationary and unbounded $(Y_n)_{n\in\N}$. In their analysis, they assumed that in the dissipativity condition \eqref{eq:dissipativity}, the parameter $b$ depends on $y$, while $\Delta$ is constant. As a result, the G\"artner-Ellis type condition \eqref{eq:GEcond} could be omitted.
	
	In our joint paper with Rásonyi \cite{lovasCLT}, we studied the stochastic gradient Langevin dynamics with stationary and weakly dependent data streams $(Y_n)_{n \in \mathbb{Z}}$.  
	We proved that $\law{X_n}$ converges to a limiting distribution at an exponential rate as $n \to \infty$.  
	We also established the law of large numbers and the functional central limit theorem for $\Phi(X_n)$, $n \in \mathbb{N}$, where $\Phi: \mathbb{R}^d \to \mathbb{R}$ is an at most polynomially growing function.  
	Our arguments build upon the results from \cite{herrndorf1984}, which required the verification of $\alpha$-mixing properties for the process $(X_n)_{n \in \mathbb{N}}$.
	
	The aforementioned studies all assume stationarity of $(Y_n)_{n\in\mathbb{N}}$. Additionally, they either rely on some mixing property (such as conditional L-mixing or $\alpha$-mixing) of $(Y_n)_{n\in\mathbb{N}}$ or address the issue through alternative means. For example, \cite{lovas} circumvents the need for mixing properties by employing the \eqref{eq:GEcond} G\"artner-Ellis-type condition, whereas \cite{rasonyi2022stability} bypasses it by assuming that $\Delta$ in the \eqref{eq:dissipativity} dissipativity condition is constant.
	In the \eqref{eq:dissipativity} dissipativity condition, we allow both $\Delta$ and $b$ to depend on $y$. Furthermore, we maintain the assumption that the update function $H$ grows at most linearly in its first argument. 
     
	\begin{assumption}\label{as:SGLD:dis_lin}
		We assume that there are measurable functions $\Delta, b:\R^m\to\R$ and a measurable $v:\R^m\to [0,\infty)$ such that for all $x\in\R^d$ and $y\in\R^m$,
		\begin{equation*}
			\left<x,H(x,y)\right> \ge \Delta (y)\|x\|^2-b(y).
		\end{equation*}
		Furthermore, 
		\begin{equation*}
			\| H(x,y)\|\le L(\|x\|+v(y)+1),\quad x\in\R^d,\,y\in\R^m
		\end{equation*}
		holds with some $L>0$.
	\end{assumption}

    Compared to the previously cited works, we establish several substantial improvements. For instance, the second part of Assumption~\ref{as:SGLD:dis_lin} is satisfied with $v(\cdot) = \|\cdot\|$, in particular when $H$ is Lipschitz-continuous. In contrast to earlier contributions, we do not require explicitly the boundedness of either $\Delta$ or $b$, and the update function $H$ is allowed to grow faster than linearly in its second argument. Moreover, we impose no boundedness assumption on the data stream $(Y_n)_{n \in \mathbb{N}}$. Relaxing these restrictive conditions significantly enhances the practical relevance of our results. Finally, in practical applications stationarity of $(Y_n)_{n \in \mathbb{N}}$ cannot generally be assumed. Assuming only weak dependence, we establish the transition of $\alpha$-mixing properties from the sequence $(Y_n)_{n \in \mathbb{N}}$ to $(X_n)_{n \in \mathbb{N}}$, by exploiting the results of Section~\ref{sec:main}.



	\begin{lemma}\label{lem:SGLD}
	Under Assumption \ref{as:SGLD:dis_lin}, for $\lambda > 0$, the parametric kernel $Q$ in \eqref{eq:SGLD_Q} satisfies the drift condition with $V(x) = \|x\|^2$, i.e.
	\begin{align*}
		[Q(y)V](x) \leq \gamma(y) V(x) + K(y),
	\end{align*}
	where $\gamma(y) = 3L^2 \lambda^2 - 2\lambda \Delta(y) + 1$ and
	\[
	K(y) = 3L^2 \lambda^2 (v(y)^2 + 1) + 2\lambda (b(y) + d/\beta).
	\]
	Furthermore, the parametric kernel $Q$ satisfies the following minorization condition for any $R > 0$:
	\begin{align*}
		Q(y,x,A) \geq 2^{-d/2} \exp\left(-\frac{\beta}{2\lambda}(1 + \lambda L)^2(1 + \sqrt{R} + v(y))^2\right) 
			\mathbb{P}\left(\sqrt{\lambda/\beta} \xi_0 \in A\right),
	\end{align*}
	where $y \in \mathbb{R}^m$, $x \in \stackrel{-1}{V}([0,R])$, and $A \in \mathcal{B}(\mathbb{R}^d)$.
	\end{lemma}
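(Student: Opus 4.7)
The plan is to prove both parts by direct computation from the explicit Gaussian structure of the kernel. Writing $\mu := \mu(x,y) = x - \lambda H(x,y)$, the kernel $Q(y,x,\cdot)$ is the law of $\mu + \sqrt{2\lambda/\beta}\,\xi_0$ with $\xi_0\sim\mathcal{N}(0,I_d)$, so both inequalities reduce to manipulations of this Gaussian.

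For the drift inequality, I would first use $\E\xi_0 = 0$ and $\E\|\xi_0\|^2 = d$ to get
\[
[Q(y)V](x) = \E\|\mu + \sqrt{2\lambda/\beta}\,\xi_0\|^2 = \|\mu\|^2 + \tfrac{2\lambda d}{\beta}.
\]
Then I would expand $\|\mu\|^2 = \|x\|^2 - 2\lambda\langle x, H(x,y)\rangle + \lambda^2\|H(x,y)\|^2$, apply the dissipativity bound from Assumption~\ref{as:SGLD:dis_lin} to the cross term, and control $\|H(x,y)\|^2$ via the linear growth bound together with the elementary inequality $(a+b+c)^2 \le 3(a^2+b^2+c^2)$. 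Collecting the coefficient of $\|x\|^2$ and the remaining terms yields precisely the announced $\gamma(y)$ and $K(y)$.

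For the minorization, I would work directly with the Gaussian density
\[
q_{x,y}(z) = \left(\tfrac{\beta}{4\pi\lambda}\right)^{d/2}\exp\!\left(-\tfrac{\beta}{4\lambda}\|z-\mu\|^2\right)
\]
and use the parallelogram-style bound $\|z-\mu\|^2 \le 2\|z\|^2 + 2\|\mu\|^2$ (coming from $2|\langle z,\mu\rangle| \le \|z\|^2+\|\mu\|^2$) to split the $z$-dependence from the $\mu$-dependence. This gives
\[
q_{x,y}(z) \ge 2^{-d/2}\exp\!\left(-\tfrac{\beta}{2\lambda}\|\mu\|^2\right) \left(\tfrac{\beta}{2\pi\lambda}\right)^{d/2}\exp\!\left(-\tfrac{\beta}{2\lambda}\|z\|^2\right),
\]
where the rightmost factor is exactly the density of $\sqrt{\lambda/\beta}\,\xi_0$. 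Integrating over $A$ yields the claimed minorization with $\kappa_R(y,\cdot) = \mathbb{P}(\sqrt{\lambda/\beta}\,\xi_0\in\cdot)$, a probability measure independent of both $x$ and $y$.

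The final step would be to bound $\|\mu\|$ on the sublevel set $\{V \le R\}$. For $\|x\|\le\sqrt{R}$ the triangle inequality combined with the linear growth of $H$ gives
\[
\|\mu\| \le \|x\|(1+\lambda L) + \lambda L(1+v(y)) \le (1+\lambda L)(1+\sqrt{R}+v(y)),
\]
and squaring inside the exponential produces exactly the stated prefactor. I expect no substantial obstacle in this proof; the only thing to watch carefully is the factor of $2$ discrepancy between the noise variance $2\lambda/\beta$ inside $Q$ and the variance $\lambda/\beta$ of the comparison Gaussian, which is precisely what produces the $2^{-d/2}$ constant in the announced bound and what makes the minorizing measure independent of $(x,y)$.
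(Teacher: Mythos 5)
Your proposal is correct and follows essentially the same route as the paper: the drift bound is obtained by the identical expansion of $\|x-\lambda H(x,y)\|^2$ with dissipativity and the $(a+b+c)^2\le 3(a^2+b^2+c^2)$ estimate, and your density-level minorization via $\|z-\mu\|^2\le 2\|z\|^2+2\|\mu\|^2$ is exactly the paper's substitution argument written without the change of variables. The constants, including the $2^{-d/2}$ prefactor and the bound $\|\mu\|\le(1+\lambda L)(1+\sqrt{R}+v(y))$ on the sublevel set, all check out.
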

	\begin{proof}
		By Assumption \ref{as:SGLD:dis_lin}, we can write 
		\begin{align}
			\begin{split}
				\|x-\lambda H(x,y)\|^2 &= \|x\|^2 - 2\lambda \langle x, H(x,y) \rangle + \lambda^2 \|H(x,y)\|^2 \\
				&\leq \|x\|^2 - 2\lambda (\Delta(y)\|x\|^2 - b(y)) + 
				3L^2\lambda^2 (v(y)^2 + \|x\|^2 + 1) \\
				&= (3L^2\lambda^2 - 2\lambda \Delta(y) + 1)\|x\|^2 
				+ 3L^2\lambda^2 (v(y)^2 + 1) + 2\lambda b(y).
			\end{split}
		\end{align}
		
		Using this, we immediately obtain the desired form of the drift condition:
		\begin{align*}
			\begin{split}
				[Q(y)V](x) &= 
				\mathbb{E}\left[\left\|x-\lambda H(x,y) + \sqrt{\frac{2\lambda}{\beta}}\xi_0\right\|^2\right] 
				= \left\|x-\lambda H(x,y)\right\|^2 + 
				\frac{2\lambda}{\beta}\mathbb{E}[\|\xi_0\|^2] \\
				&\leq 
				(3L^2\lambda^2 - 2\lambda \Delta(y) + 1)\|x\|^2 
				+ 3L^2\lambda^2 (v(y)^2 + 1) + 2\lambda (b(y) + d/\beta).
			\end{split}
		\end{align*}
		
		To verify the minorization condition, let $y \in \mathbb{R}^m$, $x \in \mathbb{R}^d$ such that $V(x)=\|x\|^2 \leq R$, and let $A \in \mathcal{B}(\mathbb{R}^d)$ be arbitrary. Then, by employing the substitution 
        $u = \sqrt{2}z+\sqrt{\frac{\beta}{\lambda}}\left(x-\lambda H(x,y)\right)$, 
        we can write:
		\begin{align}\label{eq:Qmin}
			\begin{split}
				Q(y,x,A) &= \int_{\mathbb{R}^d} \ind_{A}\left(x - \lambda H(x,y) + \sqrt{\frac{2\lambda}{\beta}} z\right) f_{\xi_0}(z)\, \mathrm{d}z \\
				&= 
                \int_{\mathbb{R}^d} \ind_{A}\left(\sqrt{\frac{\lambda}{\beta}} u\right)
				f_{\xi_0}\left(\frac{1}{\sqrt{2}}u-\frac{1}{\sqrt{2}}\sqrt{\frac{\beta}{\lambda}}(x-\lambda H(x,y))\right)2^{-d/2}\, \mathrm{d}u,
			\end{split}
		\end{align}
		where $f_{\xi_0}$ is the density function of $\xi_0$. Using the inequalities 
		$\|u - (x - \lambda H(x,y))\|^2 \leq 2\|u\|^2 + 2\|x - \lambda H(x,y)\|^2$ and 
		$\|x - \lambda H(x,y)\| \leq \|x\| + \lambda L (v(y) + \|x\| + 1)$, we obtain
		\begin{align*}
			f_{\xi_0}\left(\frac{1}{\sqrt{2}}u-\frac{1}{\sqrt{2}}\sqrt{\frac{\beta}{\lambda}}(x-\lambda H(x,y))\right)
			&\geq f_{\xi_0}(u)\exp\left(-\frac{\beta}{2\lambda}\|x-\lambda H(x,y)\|^2\right)
            \\
            &\geq \exp\left(-\frac{\beta}{2\lambda}(1 + \lambda L)^2(1 + \sqrt{R} + v(y))^2\right)  f_{\xi_0}(u)
		\end{align*}
		Substituting this back into \eqref{eq:Qmin} yields
		\begin{align*}
			Q(y,x,A) \geq 2^{-d/2} \exp\left(-\frac{\beta}{2\lambda}(1 + \lambda L)^2(1 + \sqrt{R} + v(y))^2\right) 
			\mathbb{P}\left(\sqrt{\lambda/\beta} \xi_0 \in A\right),
		\end{align*}
		which is exactly what we aimed to prove.
	\end{proof}

    In Assumption~\ref{as:SGLD:dis_lin}, without loss of generality, we may assume that $b$ is non-negative. Then, by applying the Cauchy--Schwarz inequality, we obtain for any fixed $y \in \mathbb{R}^m$ that
\begin{align*}
  \Delta(y)\|x\| - b(y) \;\le\; \langle x, H(x,y)\rangle \;\le\; L \|x\| \bigl(1+\|x\|+v(y)\bigr),
\end{align*}
for all $x \in \mathbb{R}^d$. This inequality can only hold if
\begin{align*}
  L^2 \bigl(v(y)+1\bigr)^2 \;\le\; 4 b(y)\,\bigl(L - \Delta(y)\bigr),
\end{align*}
which, together with the fact that $b(y) \ge 0$, yields $\Delta(y) \le L$.  
Consequently, for $\gamma$ as defined in Lemma~\ref{lem:SGLD} and any positive step size $\lambda > 0$, we have
\[
  \gamma(y) \;\ge\; 3L^2\lambda^2 - 2L\lambda + 1 \;\ge\; \tfrac{2}{3} \;>\; 0, 
  \quad y \in \mathbb{R}^m,
\]
which ensures that $\gamma(y) \in (0,\infty)$ for all $y \in \mathbb{R}^m$.  
Similarly, we may replace $K(\cdot)$ with $K(\cdot)+1$; hence, without loss of generality, we may assume that $K(\cdot) \ge 1$ in the drift condition.


    In the literature on stochastic gradient Langevin dynamics, the minorization measure \( \kappa_R(y, \cdot) \) appearing on the right-hand side of the minorization condition is typically a suitable restriction of the Lebesgue measure. In Lemma \ref{lem:SGLD}, however, we use a standard Gaussian distribution as the minorization measure. Moreover, in our construction, \( \kappa_R(y, A) \) does not depend on \( R \) or \( y \).

	\begin{assumption}\label{as:SGLD:coefficients}
		We assume that there exists $\delta\in (0,1)$ such that 
        $$
        \sup_{n\in\N}\E \left[v(Y_n)^{2\delta}+|b(Y_n)|^{\delta}\right]<\infty.$$ 
        Furthermore, we suppose that
        $$
        \tilde{\Delta}:=\inf_{n\in\N}\E [\Delta (Y_n)]>0.
        $$
	\end{assumption}

    Note that \(\E[\gamma(Y_n)] = 3L^2\lambda^2 - 2\lambda \E[\Delta(Y_n)] + 1 < 1\) holds whenever \(0 < \lambda < \frac{2\E[\Delta(Y_n)]}{3L^2}\). Therefore, under Assumption~\ref{as:SGLD:coefficients}, for any $0 < \lambda < \frac{2}{3L^2} \tilde{\Delta}$, we have \(\sup_{n \in \mathbb{N}} \E[\gamma(Y_n)] < 1\). In the following, the step size \(\lambda > 0\) is always chosen accordingly.

    Additionally, if the data stream \((Y_n)_{n \in \mathbb{Z}}\) is strongly stationary, then Assumption~\ref{as:SGLD:coefficients} reduces to the following set of conditions:
    \[
    \E\left[ v(Y_0)^{2\delta} + |b(Y_0)|^\delta \right] < \infty,
    \quad \text{for some } \delta \in (0,1),
    \quad \text{and} \quad
    \E[\Delta(Y_0)] > 0.
    \]
    Thus, by choosing the step size smaller than $\frac{2}{3L^2}\E[\Delta(Y_0)]$, we can ensure that 
    \[
    \sup_{n \in \mathbb{N}} \E[\gamma(Y_n)] = \E [\gamma (Y_0)]< 1.
    \]
    
    
	\begin{theorem}\label{thm:SGLD:main}
        Let Assumption~\ref{as:SGLD:coefficients} hold and choose \(\lambda > 0\) such that \(\sup_{n \in \mathbb{N}} \E[\gamma(Y_n)] < 1\).  
In addition, suppose that the data stream \((Y_n)_{n \in \mathbb{N}}\) satisfies at least one of the following conditions:
\begin{enumerate}
    \item There exists \(M > 0\) such that for all \(t > 0\) and \(n \ge 1\), the following concentration inequality holds:
    \[
    \sup_{j \in \mathbb{N}} \P \left(
        \sum_{k=1}^n \log \gamma(Y_{k+j}) > t + \sum_{k=1}^n \E[\log \gamma(Y_{k+j})]
    \right) \le 
    \exp \left(-\frac{t^2}{Mn}\right);
    \]

    \item \((Y_n)_{n \in \mathbb{N}}\) is \(\phi\)-mixing, and \(\sup_{n \in \mathbb{N}} \|\Delta(Y_n)\|_\infty < \infty\);

    \item  \((Y_n)_{n \in \mathbb{N}}\) is \(\psi\)-mixing;

    \item 
    $$
    \Delta_{-}:=\inf_y\Delta (y)>0.
    $$
\end{enumerate}

Then the following hold:
\begin{enumerate}

\item There exists $s \in (0,1)$ such that for any measurable function $\Phi:\mathbb{R}^d \to \mathbb{R}$ satisfying
\begin{equation}\label{eq:Phi}
|\Phi(x)|^p \le C(1 + \|x\|^{2s})
\end{equation}
for some $C > 0$ and $p \ge 1$, we have $\sup_{n \in \mathbb{N}} \mathbb{E}[|\Phi(X_n)|^p] < \infty$.

\item There exists $\kappa\in (0,1)$ and $c>0$ such that
$$
\alpha^{X,Y}(n)\leq c \inf_{1\leq i\leq q\leq n}\left\{r_i+\kappa^{n/q}+\alpha^Y(q+1-i)\right\},
$$
where $(r_i)_{i\in\N}$ is as in Corollary \ref{cor:A12fromMixing}.

In particular, 
\begin{itemize}
 \item if $\alpha^Y(n) = O(n^{-a})$ for some $a > 1$, then $\alpha^X(n) = O\left(\frac{\log^a(n)}{n^a}\right)$. 
 
 \item If $\alpha^Y(n) = O(\kappa^n)$ for some $\kappa \in (0,1)$, then there exists $\bar{\kappa} \in (0,1)$ such that $\alpha^X(n) = O(\bar{\kappa}^{\sqrt{n}})$.
\end{itemize}

\item Furthermore, if $(Y_n)_{n \in \mathbb{Z}}$ is stationary, then there exists a stationary process $((X_n^*, Y_n))_{n \in \mathbb{Z}}$ whose distribution is unique, and such that $\operatorname{Law}(X_n) \to \operatorname{Law}(X_0^*)$ as $n \to \infty$, in total variation distance, with the same rates as in point 2. That is,
\[
\|\operatorname{Law}(X_n) - \operatorname{Law}(X_0^*)\|_{\mathrm{TV}} = O\left(\frac{\log^a(n)}{n^a}\right)
\]
if $\alpha^Y(n) = O(n^{-a})$ for some $a > 1$, and
\[
\|\operatorname{Law}(X_n) - \operatorname{Law}(X_0^*)\|_{\mathrm{TV}} = O(\bar{\kappa}^{\sqrt{n}})
\]
for some $\bar{\kappa} \in (0,1)$ if $\alpha^Y(n) = O(\kappa^n)$ for some $\kappa \in (0,1)$.

\end{enumerate}
\end{theorem}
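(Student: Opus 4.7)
The plan is to verify Assumption \ref{as:main_assumptions} for the SGLD kernel of Lemma \ref{lem:SGLD} (after replacing $V$ by a suitable power via Lemma \ref{lem:V^delta}) and then to invoke the general machinery of Section \ref{sec:main}. Once A1 and A2 are in hand, part~(1) will follow from Corollary \ref{cor:moment_bound}, part~(2) from Corollary \ref{cor:A12fromMixing} combined with the choices $i=[q/2]$ and $q\sim cn/\log n$ or $q\sim \sqrt{n}$ indicated in the notes following that corollary, and part~(3) from Theorem \ref{thm:mix_stac_sol} together with Corollary \ref{cor:fwd_dtv}.

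A2 is the easier of the two. The minorisation coefficient from Lemma \ref{lem:SGLD} takes the form
$$
\beta(R,y)=1-2^{-d/2}\exp\!\left(-\tfrac{\beta}{2\lambda}(1+\lambda L)^2(1+\sqrt{R}+v(y))^2\right),
$$
so the event $\{\beta(R,Y_n)>\overline{\beta}\}$ is a level set of $v(Y_n)$. Markov's inequality applied with the uniform moment bound $\sup_n\E[v(Y_n)^{2\delta}]<\infty$ supplied by Assumption \ref{as:SGLD:coefficients} then yields $\sup_n\P(\beta(R,Y_n)>\overline{\beta})\to 0$ as $\overline{\beta}\uparrow 1$, which is exactly A2.

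The heart of the proof is A1. Working with $V^\delta$ instead of $V$ via Lemma \ref{lem:V^delta} replaces $\gamma$ and $K$ by $\gamma^\delta$ and $K^\delta$; by Assumption \ref{as:SGLD:coefficients} we have $\sup_t\E[K^\delta(Y_t)]<\infty$. The summability of $d_\ell=\sup_t\E\bigl[K^\delta(Y_t)\prod_{i=1}^{\ell}\gamma^\delta(Y_{t+i})\bigr]$ must then be extracted from $\sup_n\E[\gamma(Y_n)]<1$ through a long-term contractivity argument whose nature depends on which of the four regularity conditions is imposed on $(Y_n)_{n\in\N}$. Under case~4 one has $\gamma(y)\le 3L^2\lambda^2-2\lambda\Delta_-+1<1$ uniformly in $y$, so $d_\ell$ decays geometrically at once; under case~1 the sub-Gaussian concentration inequality on $\sum_k\log\gamma(Y_{k+j})$ delivers, via a Chernoff-type argument and after possibly reducing $\delta$, a geometric bound on $\E\bigl[\prod\gamma^\delta(Y_{t+i})\bigr]$ directly; cases~2 and~3 require combining the $\phi$- or $\psi$-mixing of $(Y_n)$ with the tightness (respectively boundedness) of $\gamma^\delta(Y_n)$, and are precisely the situations handled by Lemma \ref{lem:ga} and Lemma \ref{conc} of the appendix.

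The main obstacle will be the A1 verification in cases~2 and~3, where $\gamma(Y_n)$ is only average-contractive and may be unbounded in case~3; there one has to convert the weak-dependence hypothesis on $Y$ into a sub-multiplicative control $\E\bigl[\prod_{i=1}^\ell \gamma^\delta(Y_{t+i})\bigr]\le L\chi^\ell$ of the type displayed in \eqref{contract}, which is what Lemma \ref{lem:ga} is designed to provide. Once that bound is in hand, part~(1) follows by applying Corollary \ref{cor:moment_bound} with Lyapunov function $V^s=\|\cdot\|^{2s}$ for $s\in(0,1)$ small enough that A1 holds; part~(2) is the direct output of Corollary \ref{cor:A12fromMixing}, with the two explicit rates obtained by the $(i,q)$ optimisations recorded in the notes after that corollary; and part~(3) combines Theorem \ref{thm:mix_stac_sol} (existence, uniqueness and mixing of the stationary joint law) with Corollary \ref{cor:fwd_dtv} (total variation rate), inheriting the same decay rates as part~(2).
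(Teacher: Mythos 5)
Your proposal follows essentially the same route as the paper's proof: verify A2 via Markov's inequality on $v(Y_n)^{2\delta}$, verify A1 by passing to the power $V^s=\|\cdot\|^{2s}$ and establishing geometric decay of $d_\ell$ case by case (a Chernoff-type argument — Lemma \ref{useful} — for the concentration hypothesis, Lemma \ref{lem:ga} for the $\phi$- and $\psi$-mixing cases, uniform contraction in case 4), then conclude via Lemma \ref{mom}, Corollary \ref{cor:A12fromMixing} with the stated $(i,q)$ optimisations, and the forward coupling of Section \ref{sec:stac_environment}. The only detail worth making explicit is the Cauchy--Schwarz step the paper uses to decouple $K(Y_t)^s$ from $\prod_i\gamma(Y_{t+i})^s$ (which forces $s<\delta/2$), but your "after possibly reducing $\delta$" covers this.
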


\paragraph{Note.} 
For the SGLD, algorithm, the convergence of the marginal distribution to the stationary distribution of a MCRE has been studied in previous references. See in particular, \cite{lovas} who obtained a subexponential rate or \cite{lovasCLT} for an exponential rate, without assuming a mixing condition on the environment. However, their results require the boundedness of the process $(Y_t)_{t\in\Z}$, which is an important limitation. Working with unbounded environments leads to a substantial difficulty, with drift/minorization conditions depending on the environment, as shown in Lemma \ref{lem:SGLD}. The fact that the mixing coefficients of the process $(Y_t)_{t\in\Z}$ influence the convergence rate is mainly due to our general proof technique for getting Theorem \ref{thm:A12fromMixing}, which aim is to control how fast the environment goes back to stable regions. Our results are then much more general, though one cannot recover directly the rates obtained under more restrictive assumptions.

\begin{proof}
We show that there exists an exponent $s \in (0,1)$ such that the parametric kernel $Q$ defined in \eqref{eq:SGLD_Q} with $V(x)=\|x\|^{2s}$ satisfies part A1 of Assumption~\ref{as:main_assumptions} in the long-term contractivity condition form, i.e., $d_n = O(\rho^n)$ for some $\rho \in (0,1)$. We shall see that Part A2 follows easily from Assumption \ref{as:SGLD:coefficients}. Under these conditions, point 1 follows from Lemma~\ref{mom}, point 2 from part 4 of Lemma~\ref{conc}, and point 3 from the forward coupling argument with the unique stationary solution, as described in Section~\ref{sec:stac_environment}.

Let $s \in (0,1)$ be arbitrary and suppose that $V^s(x) \le R$. Then, by Lemma~\ref{lem:SGLD}, we have
\[
Q(y, x, A) \ge 2^{-d/2} \exp\left(
-\frac{\beta}{2\lambda}(1 + \lambda L)^2 \left(1 + R^{\frac{1}{2s}} + v(y)^2 \right)
\right) \mathbb{P} \left(
\sqrt{\lambda / \beta} \, \xi_0 \in A
\right),
\]
for all $y \in \mathbb{R}^m$ and $A \in \mathcal{B}(\mathbb{R}^d)$ hence for the minorization coefficient, we have 
\[
\beta(R, y) = 1 - 2^{-d/2} \exp\left(
-\frac{\beta}{2\lambda}(1 + \lambda L)^2 \left(1 + R^{\frac{1}{2s}} + v(y)^2 \right)
\right).
\]
By Assumption~\ref{as:SGLD:coefficients}, there exists $\delta \in (0,1)$ such that $\sup_{n \in \mathbb{N}} \mathbb{E}[v(Y_n)^{2\delta}] < \infty$. Hence, by Markov's inequality,
\[
\sup_{n \in \mathbb{N}} \mathbb{P}(v(Y_n) > v') \le \frac{\mathbb{E}[v(Y_n)^{2\delta}]}{(v')^{2\delta}} \to 0 \quad \text{as } v' \to \infty,
\]
which implies
\[
\lim_{\bar{\beta} \uparrow 1} \sup_{n \in \mathbb{N}} \mathbb{P}(\beta(R, Y_n) > \bar{\beta}) = 0,
\]
and thus part A2 of Assumption~\ref{as:main_assumptions} is satisfied.

Since $\sup_{n \in \mathbb{N}} \mathbb{E}[\gamma(Y_n)] < 1$, for any $j \in \mathbb{N}$ we have
\[
- \max_{1 \le k \le n} \mathbb{E}[\log \gamma(Y_{k+j})] 
\ge 1 - \max_{1 \le k \le n} \mathbb{E}[\gamma(Y_{k+j})] 
\ge 1 - \sup_{n \in \mathbb{N}} \mathbb{E}[\gamma(Y_n)] > 0,
\]
thus, under condition~1, Lemma~\ref{useful} yields the existence of $\delta_0 > 0$ such that for any $0 < s < \delta_0$,
\begin{equation} \label{eq:concl}
\sup_{j \in \mathbb{N}} \mathbb{E} \left[ \gamma(Y_{j+1})^s \cdots \gamma(Y_{j+n})^s \right] = O(\rho^n)
\end{equation}
for some $\rho \in (0,1)$. If condition~2 or 3 holds, the same conclusion follows from Lemma~\ref{lem:ga}. Let $0 < s < \frac{1}{2} \min(\delta, \delta_0)$ be arbitrary, where $\delta$ is as in Assumption~\ref{as:SGLD:coefficients}. Then it is straightforward to verify that for the functions $\gamma(y)$ and $K(y)$ defined in Lemma~\ref{lem:SGLD}, we have
\[
d_0 = \sup_{n \ge 0} \mathbb{E} \left[ K(Y_n)^s + \gamma(Y_n)^s \right] < \infty.
\]
Moreover, by the Cauchy–Schwarz inequality and \eqref{eq:concl}, we obtain
\[
d_l = \sup_{t \ge -1} \mathbb{E} \left[
K(Y_t)^s \prod_{i=1}^l \gamma(Y_{t+i})^s
\right]
\le 
\sup_{t \in \mathbb{N}} \mathbb{E}^{1/2}[K(Y_t)^{2s}] \cdot 
\sup_{t \in \mathbb{N}} 
\mathbb{E}^{1/2} \left[ \prod_{i=1}^l \gamma(Y_{t+i})^{2s} \right]
= O(\rho^l),
\]
which completes the proof.

\end{proof}

Combining the result above with the theorems on strongly mixing sequences from Appendix A of \cite{lovas2024transition}, we obtain several interesting and useful consequences. For instance, applying Theorems A.1 and A.3 together with Remark A.2 in \cite{lovas2024transition}, and under the assumptions of Theorem~\ref{thm:SGLD:main}, if $\alpha^Y(n) = O(n^{-\frac{r}{r - 2}})$ for some $r > 2$, and $\Phi:\mathbb{R}^d \to \mathbb{R}$ satisfies condition~\eqref{eq:Phi} with $p > r/2$, then
\[
\frac{1}{n}\sum_{k=1}^n \left(\Phi(X_k) - \mathbb{E}[\Phi(X_k)]\right) \to 0, \quad \text{as } n \to \infty,
\]
almost surely and also in $L^1$. Similarly, under suitable summability assumptions on the decay rate of $\alpha^Y(n)$ and an appropriate choice of $p$ in \eqref{eq:Phi}, a central limit theorem for the sequence $(\Phi(X_n))_{n \in \mathbb{N}}$ can be established by applying Corollary A.6 and Remark A.7 in \cite{lovas2024transition} or directly by applying Theorem 1.5 in \cite{lovas2024transition}.

\paragraph{Example: Online logistic regression.} 
Example~1.1 in \cite{lovasCLT} presents a regularized logistic regression model, where $m \geq 2$, $d := m-1$, and $Y_n=(Q_n,Z_n)\in \{0,1\}\times\R^{d}$, $n\in\Z$, is a stationary sequence of random variables. 
The goal is to optimize the regression parameter $\theta \in \R^d$ such that the $L^2$-regularized binary cross-entropy
\begin{equation}\label{eq:logiU}
U(\theta) \;=\; -\E \!\left[\log \!\Big( \sigma(\langle \theta, Z_0\rangle)^{Q_0} \,
\big(1-\sigma(\langle \theta, Z_0\rangle)\big)^{1-Q_0}\Big)\right] \;+\; c\|\theta\|^2 ,
\end{equation}
is minimized, where $\sigma(x)=1/(1+e^{-x})$ denotes the sigmoid function and $c>0$ is a regularization parameter. Thus, one aims to predict the binary variable $Q$ from the covariates $Z$. 

Despite its simplicity, the logistic regression model outlined above proves to be an effective and natural choice for analyzing online data. In \cite{tyagi2018sentiment}, the authors employ a logistic regression classifier with unigram feature vectors for sentiment analysis of Twitter posts. Similarly, \cite{Dey2017OnlineLearning} applies this model to the Amazon product review dataset, where, based on the occurrences of key words (represented by the coordinates of $Z$), each review is classified as positive or negative ($Q = 1$ or $Q = 0$). Here, the feature vector consists of word counts of important terms, and the study compares the convergence of stochastic gradient descent versus batch gradient descent. Finally, \cite{aliman2022sentiment} presents a study evaluating various machine learning techniques for tweet classification, concluding that logistic regression achieves the highest accuracy and serves as the best-fitted algorithm for predicting potential mental health crisis tweets.

An unbiased estimator of the gradient $\nabla U$ is
\begin{equation}\label{eq:logiH}
H(\theta,(q,z)) = -\big(q-\sigma(\langle \theta, z\rangle)\big)z + 2c\theta.
\end{equation}
A straightforward calculation shows that $H$ satisfies Assumption~\ref{as:SGLD:dis_lin}.
Noting that $q\in \{0,1\}$ and $\sigma(x)\in (0,1)$ for all $x$, the Cauchy--Schwarz inequality yields
\begin{align*}
\langle \theta, H(\theta,(q,z)) \rangle 
&\ge 2c \|\theta\|^2 - 2 \|\theta\| \|z\| \\
&\ge (2c-1) \|\theta\|^2 - \|(q,z)\|^2.
\end{align*}
Hence, the dissipativity condition in the first part of Assumption~\ref{as:SGLD:coefficients} holds with $\Delta(y)=2c-1$ and $b(y)=\|y\|^2$, provided $c>1/2$.  
Similarly, by the triangle inequality,
\begin{align*}
\|H(\theta,(q,z))\| &\le |q-\sigma(\langle \theta, z\rangle)| \|z\| + 2c\|\theta\| \\
&\le 2(c+1) \left( \|\theta\| + \|(q,z)\| + 1 \right),
\end{align*}
so the linear growth condition in the second part of Assumption~\ref{as:SGLD:dis_lin} is satisfied with $L=2(c+1)$ and $v(\cdot)=\|\cdot\|$.  
Since $\Delta(y) = 2c-1>0$ for $c>1/2$, the second part of Assumption~\ref{as:SGLD:coefficients} holds automatically.  
Finally, Assumption~\ref{as:SGLD:coefficients} 
reduces to
\begin{equation}\label{eq:Ybdmom}
\sup_{n\in\N}\E \left[\|Y_n\|^\delta\right] < \infty
\end{equation}
for some $\delta\in(0,1)$, which is typically satisfied for data streams encountered in applications.

In this model, the quantity $\gamma(y) = 3L^2\lambda^2 - 2\Delta \lambda + 1$ appearing in Lemma~\ref{lem:SGLD} does not depend on $y$, since for a fixed regularization parameter $c>1/2$ we have constant values $\Delta = 2c-1$ and $L = 2(c+1)$. This allows us to determine an optimal step size $\lambda^*$ as a function of $c$, for which the gamma contraction parameter is minimized, i.e., $\gamma^* = \gamma(\lambda^*)$. Explicitly, we have
\begin{equation*}
\lambda^* = \frac{2c-1}{12 (c+1)^2}, \quad \text{and} \quad \gamma^* = \frac{8 c^2 + 28 c + 11}{12 (c+1)^2}.
\end{equation*}

\begin{figure}[ht]
    \centering
    \begin{subfigure}[b]{0.48\textwidth}
        \centering
        \includegraphics[width=\textwidth]{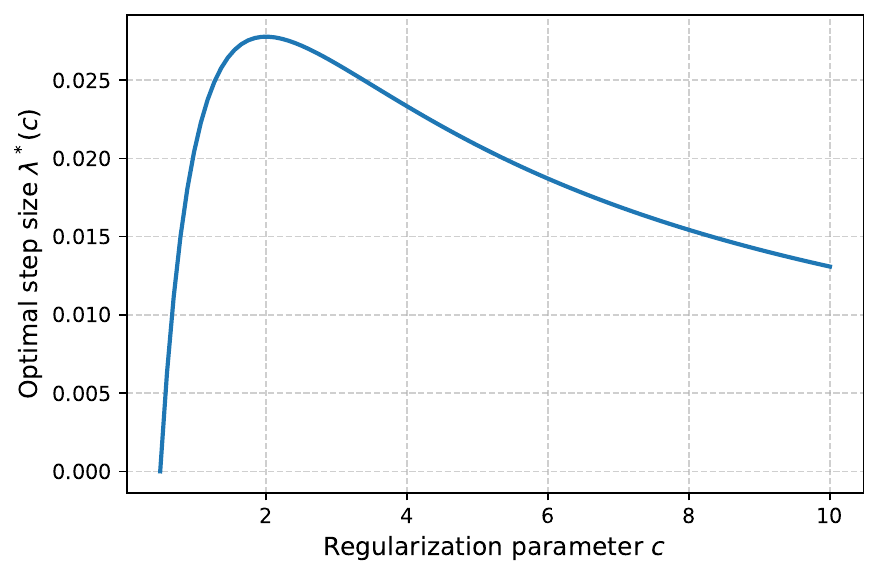}
        \caption{Optimal step size $\lambda^*(c)$ as a function of the regularization parameter.}
        \label{fig:lambda}
    \end{subfigure}
    \hfill
    \begin{subfigure}[b]{0.48\textwidth}
        \centering
        \includegraphics[width=\textwidth]{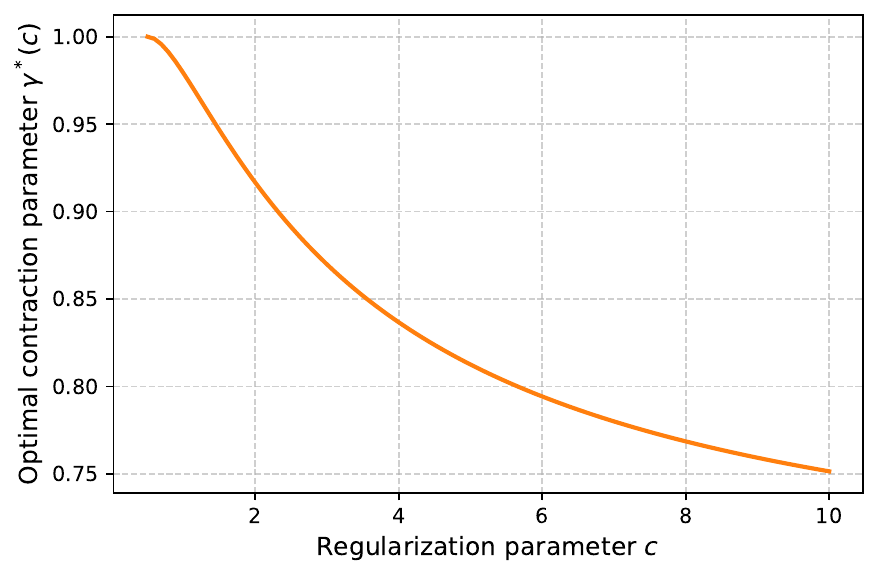}
        \caption{Optimal contraction parameter $\gamma^*(c)$ as a function of the regularization parameter.}
        \label{fig:gamma}
    \end{subfigure}
    \caption{Behavior of the optimal parameters $\lambda^*(c)$ and $\gamma^*(c)$ with respect to the regularization parameter $c$.}
    \label{fig:lambda-gamma}
\end{figure}
Figure~\ref{fig:lambda-gamma} shows that the optimal step size in $c$ admits a maximum at $c=2$ ($\lambda^* (2) = \frac{1}{36}$, $\gamma^* (2) =\frac{11}{12}$); thus, increasing the regularization parameter beyond this point forces a smaller step size. The choice of $c$ further affects the functional $U$ in \eqref{eq:logiU} and the optimizer $\theta^*$, thereby introducing bias. Estimating this bias is beyond the scope of the present work, where we restrict attention to the mixing properties of the SGLD iteration defined by the update function \eqref{eq:logiH}.

Compared to the results in \cite{lovasCLT}, a key advancement is that we do not need to assume the sequence $(Y_n)_{n\in\Z}$ to be stationary. In this setting, one can establish that the $\alpha$-mixing properties of $(Y_n)_{n\in\Z}$ are inherited by the sequence of learning parameters $(\theta_n)_{n\in\N}$. Although there is no well-defined functional $U$ in the form of \eqref{eq:logiU} whose minimum is sought, Theorem~\ref{thm:SGLD:main} still allows us to study the sequence $(\theta_n)_{n\in\N}$.

\begin{proposition}
For any fixed $\beta>0$ and $c>1/2$, if the step size satisfies
$$
0<\lambda<\frac{2\Delta}{3L^2}=\frac{2c-1}{6(c+1)^2},
$$
then, starting from any deterministic initial condition $\theta_0\in\R^d$, the SGLD iteration
$$
\theta_{n+1} = \theta_n - \lambda H (\theta_n, Y_n) + \sqrt{\frac{2\lambda}{\beta}}\,\xi_{n+1},\,\, n\in\N,
$$
with $(\xi_n)_{n\ge 1}$ i.i.d.\ standard $d$-dimensional Gaussian variables, satisfies the following for $(\theta_n)_{n\in\N}$:
\begin{enumerate}
    \item For any measurable $\Phi:\R^d\to\R$ with $|\Phi (\theta)|^p \le C (1+\|\theta\|^\delta)$ for some $C>0$ and $p\ge 1$, 
    $$
    \sup_{n\in\N} \E \left[|\Phi (\theta_n)|^p\right] < \infty,
    $$
    where $\delta\in (0,1)$ is as in \eqref{eq:Ybdmom}.
    
    \item There exist $\eta\in (0,1)$ and $c>0$ such that
    $$  
    \alpha^{Y,\theta} (n) \le c \inf_{1\le i\le q\le n} \{\eta^{n/q} + \alpha^Y (q+1-i)\}.
    $$
    
    In particular,  
        \begin{itemize}
            \item if $\alpha^Y (n) = O(n^{-a})$ for some $a>1$, then $\alpha^\theta (n) = O \left(\frac{\log^a (n)}{n^a}\right)$;

            \item if $\alpha^Y (n) = O(\kappa^n)$ for some $\kappa\in (0,1)$, then there exists $\bar{\kappa}\in (0,1)$ such that $\alpha^\theta (n) = O(\bar{\kappa}^{\sqrt{n}})$.
        \end{itemize}
\end{enumerate}
\end{proposition}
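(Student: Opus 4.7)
The plan is to recognize this proposition as a direct application of Theorem \ref{thm:SGLD:main} to the logistic regression SGLD dynamics, once the relevant hypotheses have been checked. Most of the hypothesis-checking has in fact already been carried out in the running discussion that precedes the proposition statement; my proof would simply collect these verifications and apply the theorem.

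First I would re-collect the constants that were identified in the preceding discussion: the update function \eqref{eq:logiH} satisfies Assumption \ref{as:SGLD:dis_lin} with $\Delta(y) = 2c-1$, $b(y) = \|y\|^2$, $L = 2(c+1)$ and $v(y) = \|y\|$. Next I would verify Assumption \ref{as:SGLD:coefficients}. Its moment condition reads $\sup_n \E[v(Y_n)^{2\tilde\delta} + |b(Y_n)|^{\tilde\delta}] < \infty$ for some $\tilde\delta \in (0,1)$, and with the present choices both terms reduce to $\E[\|Y_n\|^{2\tilde\delta}]$, so the choice $\tilde\delta = \delta/2$ turns this condition into exactly \eqref{eq:Ybdmom}. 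The positivity $\tilde\Delta = \inf_n \E[\Delta(Y_n)] = 2c-1 > 0$ is automatic since $\Delta$ is a positive constant. Crucially, because this constant $\Delta$ is deterministic and positive, $\Delta_- := \inf_y \Delta(y) = 2c - 1 > 0$, so condition 4 of Theorem \ref{thm:SGLD:main} is satisfied \emph{without any mixing assumption} on the data stream $(Y_n)_{n \in \Z}$. This is why I would not need to separately impose one of conditions 1--3 of that theorem.

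Finally I would verify the step-size constraint: the assumption $\lambda < \frac{2c-1}{6(c+1)^2}$ is exactly $\lambda < \frac{2\Delta}{3L^2}$, which forces $\E[\gamma(Y_n)] = 3L^2\lambda^2 - 2\Delta\lambda + 1 < 1$ uniformly in $n$, as required in Theorem \ref{thm:SGLD:main}. With all hypotheses verified, part 1 of that theorem immediately yields the uniform moment bound for any $\Phi$ of at most polynomial growth with a sufficiently small exponent (adjusting the exponent in the proposition statement as needed), and part 2 yields the stated $\alpha$-mixing inequality together with the two explicit rates obtained by choosing $i \sim q/2$ together with $q \sim n/\log n$ in the polynomial case and $q \sim \sqrt{n}$ in the geometric case, exactly as in Note 1 following Theorem \ref{thm:A12fromMixing}. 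There is essentially no analytic obstacle here; the only mildly delicate point is the alignment of growth exponents between the proposition's statement and the exponent $2s$ produced by the proof of Theorem \ref{thm:SGLD:main}, which is resolved by taking $s$ small enough relative to the $\delta$ of \eqref{eq:Ybdmom}.
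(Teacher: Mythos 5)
Your proposal is correct and follows exactly the paper's route: the paper's own proof is the one-line observation that the claim follows directly from Theorem~\ref{thm:SGLD:main}, with all the hypothesis-checking ($\Delta(y)=2c-1$, $b(y)=\|y\|^2$, $L=2(c+1)$, $v=\|\cdot\|$, reduction of Assumption~\ref{as:SGLD:coefficients} to \eqref{eq:Ybdmom}, and condition 4 via constant $\Delta_->0$) already carried out in the surrounding text, just as you collect it. One small remark on your final point: to reach the exponent $\delta$ in part 1 you actually want $s$ as \emph{large} as permitted, namely $s=\delta/2$, which is available here precisely because $\gamma$ is a deterministic constant so the Cauchy--Schwarz step in the proof of Theorem~\ref{thm:SGLD:main} (which would force $s\le\delta/4$) is not needed.
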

\begin{proof}
The claim follows directly from Theorem~\ref{thm:SGLD:main}.
\end{proof}


	\bibliographystyle{siam}
	\bibliography{MCRE}
	
	\newpage
	\appendix
	\section{Proof of Theorem \ref{thm:A12fromMixing}}\label{ap:proofs}
	
	The proof follows the approach outlined in \cite{Truquet1}. We first represent the Markov chain in random environment $(X_n)_{n \in \mathbb{N}}$ with a random iteration of the form \eqref{eq:iter}. Then, for a fixed realization of the environment $\mathbf{y} \in \mathcal{Y}$, we define a sequence of time indices $j < \tau_0 < \tau_1 < \dots$ which depends solely on the trajectory $\mathbf{y}$. We then show that there exist absolute constants $M > 0$ and $\rho \in (0,1)$ such that the following quenched coupling inequality holds:
    \begin{equation}\label{coupling}
	\P \left( Z_{j,j+n}^{x_1,\mathbf{y}} \ne Z_{j,j+n}^{x_2,\mathbf{y}} \right) \le M \left(1 + V(x_1) + V(x_2)\right) \rho^{L_{n,j}}, \quad x_1,x_2 \in \mathcal{X},
	    \end{equation}
	where the random variable $L_{n,j}$ appearing in the exponent depends on the sequence $(\tau_i)_{i \in \mathbb{N}}$ and, ultimately, on the trajectory $\mathbf{y}$.
	
	Finally, to obtain a bound for the non-coupling probability 
	\[
	\sup_{j \in \mathbb{N}} \P \left( Z_{j,j+n}^{X_j} \ne Z_{j,j+n}^{x_0} \right)
	\]
	satisfying \eqref{eq:coupling_cond}, we integrate the previously derived quenched bound with respect to the law of $Y$. To this end, it is necessary to control the deviation of $L_{n,j}$, or at least that of a suitable lower bound. This can be achieved by applying Assumption~\ref{as:main_assumptions}.

	\subsection{Basic lemmas and notations}\label{subsec:lemmas_not}
	
	In our earlier papers (see Lemma 7.1 in \cite{lovas} and Lemma 3.9 in \cite{lovasCLT}), we relied on special cases of the following lemma. It is also a variant of Lemma 6.1 in \cite{rasonyi2018}, albeit in a somewhat broader context. Such representations of parametric kernels satisfying the minorization condition \eqref{eq:smallset} can be considered standard. The following general version appears as Lemma C.2 in \cite{lovas2024transition}, where a detailed proof can also be found.
	\begin{lemma}\label{lem:T}
		Suppose the parametric kernel $Q:\Y\times\X\times\B(\X)\to [0,1]$ satisfies the minorization condition given by \eqref{eq:smallset} with $R>0$. Then there exists a measurable mapping $T^R:\X\times\Y\times [0,1]\to\X$ such that
		$$
		Q(y,x,A)=\int_{[0,1]}\ind_{T^R (x,y,u)\in A}\,\dint u,
		$$
		for all $x\in\X$, $A\in\B(\X)$ and $y\in\Y$. 
		Furthermore, exists a Borel set $U\in \B ([0,1])$
		with Lebesgue measure
		$
		\leb_1 (U)\ge 1-\beta (R,y)
		$
		such that for $u\in U$,
		\begin{equation}\label{eq:Tconst}
			T^R (y,x_1,u) = T^R (y,x_2,u),\,\,y\in\Y,\,x_1,x_2\in\stackrel{-1}{V}([0,R]).
		\end{equation}
	\end{lemma}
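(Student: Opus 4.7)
My plan is to produce $T^R$ via a standard splitting of the minorization condition, combined with the classical representation of a probability kernel on a Polish space by i.i.d.\ uniforms. On the small set $\stackrel{-1}{V}([0,R])$, a prefix of $[0,1]$ will be used to sample from $\kappa_R(y,\cdot)$ (independently of $x$), and the remainder to sample from a residual kernel absorbing the $x$-dependence; outside the small set I will simply use a direct uniform representation of $Q(y,x,\cdot)$.

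First I would invoke the standard fact that, since $\X$ is complete and separable, for any parametric probability kernel $P$ from $\Y\times\X$ to $\X$ there exists a jointly measurable $g_P:\Y\times\X\times [0,1]\to\X$ with $P(y,x,A) = \int_0^1 \ind_{\{g_P(y,x,u) \in A\}}\,\dint u$; one such $g_P$ is built from a countable generator of $\B(\X)$ via the associated parametric distribution functions and their generalized inverses. Apply this to $Q$ to obtain $g_Q$ and to $\kappa_R$ to obtain $g_\kappa(y,u)$, which is independent of $x$ since $\kappa_R$ is. When $\beta(R,y)\in(0,1)$, the minorization condition \eqref{eq:smallset} forces
\[
\tilde Q(y,x,A) := \frac{Q(y,x,A) - (1-\beta(R,y))\kappa_R(y,A)}{\beta(R,y)}
\]
to be a Borel probability measure in $A$, jointly measurable in $(y,x)$ on the small set; applying the representation once more yields $g_{\tilde Q}$. (When $\beta(R,y)=0$ the second branch below is $u$-measure-zero and any measurable choice of $g_{\tilde Q}$ suffices.)

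I would then define
\[
T^R(y,x,u)=\begin{cases}
g_\kappa\!\left(y,\tfrac{u}{1-\beta(R,y)}\right), & x\in\stackrel{-1}{V}([0,R]),\ u<1-\beta(R,y),\\[2pt]
g_{\tilde Q}\!\left(y,x,\tfrac{u-1+\beta(R,y)}{\beta(R,y)}\right), & x\in\stackrel{-1}{V}([0,R]),\ u\ge 1-\beta(R,y),\\[2pt]
g_Q(y,x,u), & x\notin\stackrel{-1}{V}([0,R]).
\end{cases}
\]
An affine change of variables on each branch shows that $\leb_1$ is pushed forward by $u\mapsto T^R(y,x,u)$ onto $Q(y,x,\cdot)$ in both cases, since the first two pieces integrate indicators to $(1-\beta(R,y))\kappa_R(y,A)$ and $\beta(R,y)\tilde Q(y,x,A)$ respectively. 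Setting $U=U(y):=[0,1-\beta(R,y))$ gives $\leb_1(U)=1-\beta(R,y)$; on $U$ the first branch is taken for every $x\in\stackrel{-1}{V}([0,R])$, and since that branch does not depend on $x$, property \eqref{eq:Tconst} holds.

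The one genuinely technical step is the joint measurability of $g_Q$, $g_\kappa$, and $g_{\tilde Q}$ in all three arguments at once; on a Polish space this is classical but requires executing the inverse-CDF construction measurably in the parameter $(y,x)$. Once this is in place, measurability of the piecewise $T^R$ reduces to measurability of $\beta(R,\cdot)$ and of $\ind_{\stackrel{-1}{V}([0,R])}$, both of which are automatic from the assumptions on $Q$ and $V$.
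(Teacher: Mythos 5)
Your construction is correct and is exactly the standard Nummelin-type splitting that the paper relies on: the paper itself gives no in-text proof (it defers to Appendix C of \cite{lovas2024transition}), and your decomposition of $Q(y,x,\cdot)$ into $(1-\beta(R,y))\kappa_R(y,\cdot)$ plus the residual kernel, each represented by a jointly measurable inverse-CDF map on an affinely rescaled piece of $[0,1]$, is the expected argument. Note only that the set $U=[0,1-\beta(R,y))$ necessarily depends on $y$, which is consistent with how the lemma is invoked in \eqref{eq:glue}, where $1-\leb_1(U)\le\beta_{\tau_i}(R_C)$ is evaluated at the current environment value.
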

	\begin{proof}
		The proof can be found in Appendix C of \cite{lovas2024transition}.  
	\end{proof}
	
	One key difference from the arguments described in \cite{lovas2024transition} is that, for now, we only assume that the parametric kernel $Q$ satisfies the minorization condition \eqref{eq:smallset} with $R>0$, which will be specified later. 
	Furthermore, unlike in \cite{lovas2024transition}, we do not impose the condition $\sup_{y\in\Y} \beta (R,y) < 1$ on the minorization constant. It is crucial to emphasize this distinction. Otherwise, we define the measurable mapping  
	\begin{equation}\label{eq:fgood}
		(x,y,u) \mapsto f(x,y,u) := T^{R}(y,x,u), \quad
		x \in \X, \, y \in \Y, \, u \in [0,1].
	\end{equation}
	
	Let $(\eps_t)_{t\in\N}$ be a sequence of i.i.d. variables uniformly distributed on $[0,1]$, such that the sigma-algebras $\F^{\eps}_{0,\infty}$ and $\sigma (Y_t, X_t, t\in\N)$ are independent.  
	Furthermore, for $s\in\N$ and $x\in\X$, let us introduce the family of auxiliary processes  
	\begin{equation}\label{eq:Zaux}
		Z_{s,t}^{x,\mathbf{y}} = \begin{cases}
			x &\text{ if } t\le s, \\
			f(Z_{s,t-1}^{x,\mathbf{y}},y_{t-1},\eps_t) &\text{ if } t>s,
		\end{cases}
	\end{equation}
	where $\mathbf{y}=(y_0,y_1,\ldots)\in\Y^\N$ can be any fixed trajectory, exactly as we did in \cite{lovas2024transition}.  
	It also holds that for $\mathbf{Y}=(Y_n)_{n\in\N}$, the process $Z_{s,t}^{X_s,\mathbf{Y}}$, ${t\ge s}$ is a version of $(X_t)_{t\ge s}$.
	
	Let $x_1, x_2 \in \mathcal{X}$ be two distinct initial values, and let $\mathbf{y} \in \mathcal{Y}^\mathbb{N}$ be a fixed trajectory. Our first goal is to estimate the non-coupling probability 
	$\P (Z_{j,j+n}^{x_1,\mathbf{y}} \ne Z_{j,j+n}^{x_2,\mathbf{y}}),$
	for a fixed $j\in\N$. 
	
	According to the following lemma, if a parametric kernel $Q$ satisfies the drift condition \eqref{eq:drift}, then the iterates of $Q$ also satisfy the drift condition, albeit with different coefficients.
	\begin{lemma}\label{lem:Viter}
		Assume that the parametric kernel $Q: \Y \times \X \times \B (\X) \to [0,1]$ satisfies the drift condition \eqref{eq:drift} with $\gamma, K: \Y \to (0, \infty)$. Then for $x \in \X$, $\mathbf{y} \in \Y^\mathbb{N}$, and $k, l \in \mathbb{N}$ with $l < k$, we have
		\[
		[Q_{l,k}(\mathbf{y})V](x) := [Q(y_{k-1}) \ldots Q(y_l) V](x) \le \prod_{r=l}^{k-1} \gamma(y_r) V(x) + \sum_{r=l}^{k-1} K(y_r) \prod_{j=r+1}^{k-1} \gamma(y_j).
		\]
	\end{lemma}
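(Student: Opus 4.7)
The plan is to prove the bound by induction on the interval length $n := k - l \ge 1$. For the base case $n = 1$, the claimed inequality collapses to the one-step drift condition \eqref{eq:drift}: the product $\prod_{r=l}^{l}\gamma(y_r)$ equals $\gamma(y_l)$, and the sum contains only the term $K(y_l)\prod_{j=l+1}^{l}\gamma(y_j) = K(y_l)$, since the empty product equals $1$ by the convention fixed in the Notations paragraph. Thus the claim reduces to $[Q(y_l)V](x) \le \gamma(y_l)V(x) + K(y_l)$, which is precisely \eqref{eq:drift}.

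For the inductive step, assume the bound has been established for intervals of length $n-1$. Using the semigroup structure of the kernels $Q(y)$, which follows directly from iterating conditional expectations in Definition \ref{def:act}, I would decompose
\[
[Q_{l,k}(\mathbf{y})V](x) = \bigl[Q(y_l)\,[Q_{l+1,k}(\mathbf{y})V]\bigr](x),
\]
so that the inner operator corresponds to an interval of length $n-1 = k - (l+1)$. The inductive hypothesis then yields a pointwise affine bound $[Q_{l+1,k}(\mathbf{y})V](z) \le A\,V(z) + B$ with the explicit constants
\[
A = \prod_{r=l+1}^{k-1}\gamma(y_r), \qquad B = \sum_{r=l+1}^{k-1}K(y_r)\prod_{j=r+1}^{k-1}\gamma(y_j).
\]
Since $Q(y_l)$ is a probability kernel, it is linear and leaves additive constants fixed, so $[Q(y_l)(A V + B)](x) = A\,[Q(y_l)V](x) + B$; one more application of \eqref{eq:drift} to $V$ gives the upper bound $A\gamma(y_l)V(x) + A\,K(y_l) + B$.

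The final step is a routine index identification: $A\gamma(y_l) = \prod_{r=l}^{k-1}\gamma(y_r)$ recovers the leading $V(x)$ coefficient, and the new additive contribution $A\,K(y_l) = K(y_l)\prod_{j=l+1}^{k-1}\gamma(y_j)$ is exactly the $r = l$ summand of $\sum_{r=l}^{k-1}K(y_r)\prod_{j=r+1}^{k-1}\gamma(y_j)$; merging it with $B$, which accounts for the contributions from $r \in \{l+1,\ldots,k-1\}$, produces the full sum and closes the induction.

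No genuine analytic obstacle arises in the argument: the probabilistic nature of the kernels makes every computation linear, and the drift condition \eqref{eq:drift} is invoked exactly once per inductive step. The only potential source of error is clerical, namely keeping the index ranges for the products $\prod_{j=r+1}^{k-1}\gamma(y_j)$ aligned when passing between the inductive hypothesis (stated on the interval $[l+1,k]$) and the desired conclusion (on $[l,k]$).
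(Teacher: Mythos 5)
Your induction is correct and is exactly the standard argument; the paper itself does not reproduce a proof here but defers to Lemma C.1 of \cite{lovas2024transition} (and Lemma 7.4 of \cite{lovas}), which proceed by the same one-step peeling of the drift condition. The only point worth flagging is notational: given the composition convention of Definition \ref{def:act}, your decomposition $[Q_{l,k}(\mathbf{y})V]=[Q(y_l)[Q_{l+1,k}(\mathbf{y})V]]$ treats $Q(y_l)$ as the outermost (first-in-time) operator, which is the reading that actually reproduces the claimed formula (each $K(y_r)$ multiplied by the later $\gamma(y_j)$, $j>r$) and matches how the lemma is used in \eqref{eq:tauchain_drift}, so your choice is the right one.
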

	
	\begin{proof}
		The detailed proof can be found in the Appendix of \cite{lovas2024transition} (see Lemma C. 1.) or in \cite{lovas} (see Lemma 7.4).
	\end{proof}
	
	\subsection{Coupling at random times}\label{subsec:cpl_random}
	
	For notational convenience, we set
	\begin{equation}\label{eq:gaKbeta}
	(\ga_t, K_t, \beta_t(R)) = (\ga(y_t), K(y_t), \beta(R, y_t)),\quad t\in\N.
	\end{equation}
	For $0 \leq j \leq t$ and $C>1$, we further introduce
	\begin{equation}\label{eq:gaS}
	\ga_{t,j}(C) = \sup_{C \leq l \leq t-j} \ga_{t-1} \cdots \ga_{t-l},
	\quad
	S_{t,j} = K_{t-1} + \sum_{l=1}^{t-j-1} \ga_{t-1} \cdots \ga_{t-l} K_{t-l-1}.
	\end{equation}
	Given a pair $(C, \bar{\beta})$, we define the sequence of random times $C+j = \tilde{\tau}_0 < \tilde{\tau}_1 < \dots$, depending solely on the trajectory $\mathbf{y}$, such that for every $i \geq 1$,
	\begin{equation}\label{eq:tau_tilde}
	\ga_{\tilde{\tau}_i,j}(C) \leq 1 - \frac{1}{C},
	\quad
	S_{\tilde{\tau}_i,j} \leq C,
	\quad
	\beta_{\tilde{\tau}_i}(R_C) \leq \bar{\beta},
	\end{equation}
	where $R_C > 0$ is to be specified later. 
	At this point, it is not clear whether $\tilde{\tau}_i$ is almost surely finite for every $i \in\N$. This will follow from Assumption \ref{as:main_assumptions}, as will be explained later.
	
	Let us define the subsequence $\tau_i=\tilde{\tau}_{iC}$, $i=0,1,\ldots$, and consider the submapping $Z_i^\iota := Z_{j,\tau_i}^{x_{\iota},\mathbf{y}}$ for $\iota=1,2$ and $i \in \mathbb{N}$. Furthermore, for brevity, we introduce the shorthand notation 
	\[
	\overline{Z}_i := (Z_i^1, Z_i^2), \quad \lVert \overline{Z}_i \rVert := \max \left( V(Z_i^1), V(Z_i^2) \right).
	\]
	Note that, $\left(\overline{Z}_i\right)_{i\in\N}$ also forms a time-inhomogeneous Markov chain. By Lemma \ref{lem:Viter} and the definition of the random time sequence $(\tau_i)_{i\ge 0}$, we have for $\iota=1,2$:
	\begin{align}\label{eq:tauchain_drift}
		\begin{split}	
		\E [V(Z_i^\iota)\mid Z_{i-1}^\iota] &=
		[Q_{\tau_{i-1},\tau_i}(\mathbf{y})V](Z_{j,\tau_{i-1}}^{x_\iota,\mathbf{y}})= [Q(y_{\tau_i-1})\dots Q(y_{\tau_{i-1}})V](Z_{j,\tau_{i-1}}^{x_\iota,\mathbf{y}})
		\\
		&\le \prod_{k=\tau_{i-1}}^{\tau_i-1}\ga (y_k)V(Z_{j,\tau_{i-1}}^{x_\iota,\mathbf{y}})
		+
		\sum_{k=\tau_{i-1}}^{\tau_i-1}K(y_k)\prod_{l=k+1}^{\tau_i-1}\ga (y_l)
		\\
		&\le\ga_{\tau_i,j}(C) V(Z_{j,\tau_{i-1}}^{x_\iota,\mathbf{y}}) + S_{\tau_i,j}
		\le \left(1-\frac{1}{C}\right) V(Z_{i-1}^\iota) + C, \quad i\ge 1.
		\end{split}
	\end{align}
	
	Furthermore, the non-coupling probabilities on small sets of the form $\{\lVert \overline{Z}_i \rVert \le R_C\}$ can be estimated using Lemma~\ref{lem:T} and the deifnition of random times $(\tilde{\tau}_i)_{i\ge 1}$~\eqref{eq:tau_tilde}, as follows:
	\begin{align}\label{eq:glue}
		\begin{split}
			\P \left(Z_{i+1}^1\ne Z_{i+1}^2\mid \overline{Z}_i\right)
			\ind_{\lVert \overline{Z}_i \rVert\le R_C}
			&\le 
			\P \left(Z_{j,\tau_i+1}^{x_1,\mathbf{y}}\ne Z_{j,\tau_i+1}^{x_2,\mathbf{y}}\mid \overline{Z}_i\right)
			\ind_{\lVert \overline{Z}_i \rVert\le R_C}
			\\
			&=\P \left(f(Z_i^1,y_{\tau_i},\eps_{\tau_i+1})\ne f(Z_i^2,y_{\tau_i},\eps_{\tau_i+1})\mid \overline{Z}_i\right)\ind_{\lVert \overline{Z}_i \rVert\le R_C}
			\\
			&\le \P (\eps_{\tau_i+1}\notin U) = 1-\leb_1 (U)\le \beta_{\tau_i} (R_C)\le \bar{\beta}.
		\end{split}
	\end{align}
	
	\begin{lemma}\label{lem:tau_chain}
		There exist constants $M > 0$ and $\rho \in (0,1)$, depending on the choice of $C$ and $\bar{\beta}$, such that
		$$
		\P (Z_{j,j+n}^{x_1,\mathbf{y}}\ne Z_{j,j+n}^{x_2,\mathbf{y}})
		\le 
		M(1+V(x_1)+V(x_2))\rho^{L_{n,j}},\quad n\in\N,
		$$
		where the quantity $L_{n,j}$ denotes the number of such random times not exceeding $n+j$:
		\begin{equation}\label{eq:Lnj}
			L_{n,j} = \max\{i \mid \tau_i \leq n+j\}.
		\end{equation}
	\end{lemma}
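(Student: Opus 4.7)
The plan is to apply a classical drift–minorization (Harris-type) coupling argument to the subsampled chain $\overline{Z}_i := (Z_i^1, Z_i^2) = (Z_{j,\tau_i}^{x_1,\mathbf{y}}, Z_{j,\tau_i}^{x_2,\mathbf{y}})$ evaluated at the random times $(\tau_i)_{i\ge 0}$. The construction of these times has been engineered so that two properties hold simultaneously at every $i\ge 1$: from \eqref{eq:tauchain_drift} the subchain satisfies the geometric drift $\E[\tilde V(\overline{Z}_i)\mid \overline{Z}_{i-1}] \le (1-1/C)\tilde V(\overline{Z}_{i-1}) + 2C$ with $\tilde V(x_1,x_2) := V(x_1)+V(x_2)$, and from \eqref{eq:glue} the small set $\{\|\overline{Z}_{i-1}\|\le R_C\}$ provides the one-step coupling bound $\P(Z^1_i \ne Z^2_i\mid \overline{Z}_{i-1}) \le \bar\beta$. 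A further crucial feature of the common-noise representation \eqref{eq:Zaux} is that once the two versions coincide they remain equal forever, so the non-coupling indicator $\chi_i := \ind_{Z_i^1 \ne Z_i^2}$ is non-increasing.

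The first step would be to observe that this coupling-preservation property, combined with $\tau_{L_{n,j}} \le n+j$, yields $\P(Z_{j,j+n}^{x_1,\mathbf{y}} \ne Z_{j,j+n}^{x_2,\mathbf{y}}) \le \E[\chi_{L_{n,j}}]$. I would then introduce the test function $F(x_1,x_2) := 1 + \alpha\tilde V(x_1,x_2)$ with a small parameter $\alpha>0$ to be calibrated, and establish the one-step Lyapunov contraction
\[
\E[F(\overline{Z}_i)\chi_i \mid \overline{Z}_{i-1}] \le \rho\, F(\overline{Z}_{i-1})\,\chi_{i-1}, \qquad i \ge 1,
\]
for some $\rho\in(0,1)$. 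The verification splits into the standard two cases: on the complement of the small set, $\|\overline{Z}_{i-1}\| > R_C$, I drop $\chi_i$ and use only the drift on $\tilde V$, so the contraction factor approaches $1-1/C$ at large values of $\tilde V$; on the small set, I combine $\E[\chi_i\mid\overline{Z}_{i-1}]\le\bar\beta$ with the unconditional drift on $\tilde V$ to obtain the bound $\bar\beta + \alpha(1-1/C)\tilde V(\overline{Z}_{i-1}) + 2\alpha C$. Iterating this contraction $L_{n,j}$ times and using $\chi_i \le F(\overline{Z}_i)\chi_i$ (since $F\ge 1$) would give $\E[\chi_{L_{n,j}}] \le \rho^{L_{n,j}}\,\E[F(\overline{Z}_0)]$, and a further application of Lemma \ref{lem:Viter} to the initial segment (using the definition of $\tilde\tau_0=C+j$ and $\tau_1$) absorbs $\E[F(\overline{Z}_0)]$ into a factor of the form $M(1+V(x_1)+V(x_2))$.

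The main obstacle will be the simultaneous calibration of $\alpha$, $R_C$, and $\rho$ so that both cases of the one-step inequality yield the same $\rho<1$. A natural choice is $\rho := \max(\bar\beta + 2\alpha C,\, 1 - 1/(2C))$: the small-set case then reduces to the easy inequality $\bar\beta + 2\alpha C \le \rho$, while the off-small-set case requires $R_C$ to be large enough that the linear term $\alpha(1-1/C)\tilde V$ can dominate the additive $1 + 2\alpha C$, concretely that $\alpha R_C\,[\rho-(1-1/C)] \ge 1 - \rho + 2\alpha C$. This forces $R_C$ to be chosen as a function of $C$ and $\bar\beta$, but no new assumption on the kernel is needed. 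Once these parameters are fixed the rest is routine bookkeeping; there is no new analytic idea beyond a quenched version of the standard Harris theorem applied to the coupled subchain.
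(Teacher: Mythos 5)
Your route is genuinely different from the paper's. The paper runs an excursion argument: it estimates the tails of the successive return times $\sigma_k$ of $(\overline{Z}_i)$ to the set $\{\lVert \overline{Z}_i\rVert\le R_C\}$ via moment generating functions, trades the number of visits $m_n$ against $n$, and only then applies the minorization to get the factor $\bar\beta^{m_n}$. You instead build a single one-step contraction for the functional $F(\overline{Z}_i)\chi_i$ on the coupled chain, in the spirit of the bivariate-drift / weighted-total-variation proofs of Harris' theorem. Both yield geometric decay in the number of admissible times; your version avoids the generating-function bookkeeping and the optimization over $m_n$, at the price of a joint calibration of $(\alpha,R_C,\rho)$. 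The ingredients you invoke --- the drift \eqref{eq:tauchain_drift}, the small-set coupling \eqref{eq:glue}, and the persistence of coupling under the common-noise iteration \eqref{eq:Zaux} --- are exactly the ones the paper establishes, so the approach is sound in substance.

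Two points need repair. First, your terminal bound $\E[\chi_{L_{n,j}}]\le \rho^{L_{n,j}}\,\E[F(\overline{Z}_0)]$ cannot be closed as written: $\overline{Z}_0=(Z^{x_1,\mathbf{y}}_{j,C+j},Z^{x_2,\mathbf{y}}_{j,C+j})$, and the conditions \eqref{eq:tau_tilde} are imposed only for $i\ge 1$, so nothing controls $\prod_{r=j}^{C+j-1}\gamma(y_r)$; hence $\E[F(\overline{Z}_0)]$ can be arbitrarily large for a bad quenched trajectory $\mathbf{y}$, whereas the constant $M$ in the lemma must be independent of $\mathbf{y}$. The fix is to stop the iteration at $i=1$ and bound $\E[F(\overline{Z}_1)]$ using the conditions defining $\tau_1=\tilde{\tau}_C$ (which dominate the full product $\gamma_{\tau_1-1}\cdots\gamma_j$ and the sum $S_{\tau_1,j}$), exactly as the paper does for $\E[V(Z_1^\iota)]$; this only costs a factor $\rho^{-1}$ absorbed into $M$. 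Second, be careful with letting $R_C$ depend on $\bar\beta$: in the overall architecture $\bar\beta=\beta_C$ is chosen \emph{after} $R_C$ is fixed (Lemma \ref{lowb} picks $\beta_C$ close to $1$ as a function of $R_C$ through Assumption {\bf A2}), so a genuine dependence of $R_C$ on $\bar\beta$ would be circular. Fortunately it is not needed: taking $\rho=\max\bigl(1-\tfrac{1}{2C},\tfrac{1+\bar\beta}{2}\bigr)$ and $\alpha=(\rho-\bar\beta)/(2C)$, the factors of $1-\bar\beta$ cancel in your off-small-set inequality and one checks that $R_C\ge 8C^2$ suffices, i.e.\ a fixed multiple of $C^2$ independent of $\bar\beta$. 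With these two adjustments your proof goes through.
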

	\begin{proof}
		
		Let us fix $R_C:=4C^2$, and introduce the sequence of successive visiting times
		$$
		\sigma_0:=0,\,\sigma_{k+1} = \min\left\{i>\sigma_k\middle|  \lVert \overline{Z}_i\rVert\le R_C
		\right\},\,k\in\N
		$$
		that are obviously stopping times with respect to the natural filtration of the process $(\overline{Z}_i)_{i\in\N}$. 
		Note that, for every $i\ge 1$, on the event $\{ \lVert \overline{Z}_i\rVert>R_C\}$, 
		\begin{equation}\label{eq:RCV}
		R_C \le \max (V(Z_i^1),V(Z_i^2))\le V(Z_i^1)+V(Z_i^2)	
		\end{equation}
		hence by the definition of $R_C$, we have
		\begin{equation}\label{eq:contr}
		\left(1-\frac{1}{C}\right)(V(Z_i^1)+V(Z_i^2))+2C\le 
		\left(1-\frac{1}{2C}\right)(V(Z_i^1)+V(Z_i^2)).
		\end{equation}
		By Markov inequality and \eqref{eq:RCV}, for $k\ge 1$ and $s\ge 0$, we obtain
		\begin{align*}
		\P (\sigma_{k+1}-\sigma_{k}>s\mid \overline{Z}_{\sigma_k})
		&=
		\E \left[ \prod_{l=1}^{s}
		\ind_{\lVert \overline{Z}_{\sigma_k+l}\rVert>R_C}
		\middle| \overline{Z}_{\sigma_k}\right]
		\\
		&\le
		\frac{1}{R_C}\E \left[\left(V(Z_{\sigma_k+s}^1)+V(Z_{\sigma_k+s}^2)\right) \prod_{l=1}^{s-1}
		\ind_{\lVert \overline{Z}_{\sigma_k+l}\rVert>R_C}
		\middle| \overline{Z}_{\sigma_k}\right].
		\end{align*}
		
		Using the tower rule and the strong Markov property of the sequence $(\overline{Z}_i)_{i\in\N}$, by the drift property \eqref{eq:tauchain_drift} and the estimate \eqref{eq:contr}, we can write
		\begin{align*}
			\E \left[\left(V(Z_{\sigma_k+s}^1)+V(Z_{\sigma_k+s}^2)\right) \prod_{l=1}^{s-1}
			\ind_{\lVert \overline{Z}_{\sigma_k+l}\rVert>R_C}
			\middle| \overline{Z}_{\sigma_k}\right] &=
			\\
			\E \left[\E\left(V(Z_{\sigma_k+s}^1)+V(Z_{\sigma_k+s}^2)
			\mid \F_{0,\sigma_{k}+s-1}^{\overline{Z}}
			\right) \prod_{l=1}^{s-1}
			\ind_{\lVert \overline{Z}_{\sigma_k+l}\rVert>R_C}
			\middle| \overline{Z}_{\sigma_k}\right] 
			&=
			\\
			\E \left[\left(
			\E\left[V(Z_{\sigma_k+s}^1)\mid Z_{\sigma_k+s-1}^1\right] 
			+
			\E\left[V(Z_{\sigma_k+s}^2)\mid Z_{\sigma_k+s-1}^2\right]
			\right) \prod_{l=1}^{s-1}
			\ind_{\lVert \overline{Z}_{\sigma_k+l}\rVert>R_C}
			\middle| \overline{Z}_{\sigma_k}\right] 
			&\le 
			\\
			\E \left[\left(\left(1-\frac{1}{C}\right)(V(Z_{\sigma_k+s-1}^1)+V(Z_{\sigma_k+s-1}^2))+2C
			\right) \prod_{l=1}^{s-1}
			\ind_{\lVert \overline{Z}_{\sigma_k+l}\rVert>R_C}
			\middle| \overline{Z}_{\sigma_k}\right] 
			&\le 
			\\
			\left(1-\frac{1}{2C}\right)
			\E \left[\left(V(Z_{\sigma_k+s-1}^1)+V(Z_{\sigma_k+s-1}^2)\right) \prod_{l=1}^{s-2}
			\ind_{\lVert \overline{Z}_{\sigma_k+l}\rVert>R_C}
			\middle| \overline{Z}_{\sigma_k}\right]&.
		\end{align*}
		
		Iteration of this argument in $s-1$ steps leads to the following estimation:
		\begin{align}\label{eq:delta_sig}
			\begin{split}
			\P (\sigma_{k+1}-\sigma_{k}>s\mid \overline{Z}_{\sigma_k}) 
			&\le
			\frac{1}{R_C}\left(1-\frac{1}{2C}\right)^{s-1}
				\E \left[V(Z_{\sigma_k+1}^1)+V(Z_{\sigma_k+1}^2) 
			\middle| \overline{Z}_{\sigma_k}\right]
			\\
			&\le 
			\left(1-\frac{1}{2C}\right)^{s-1}\frac{\left(1-\frac{1}{C}\right) \left[V( Z_{\sigma_k}^{1})+ V( Z_{\sigma_k}^{2})\right]+2C}{R_C} \\
			&\le 
			\left(1-\frac{1}{2C}\right)^{s-1}\frac{\left(1-\frac{1}{C}\right)2R_C+2C}{R_C}
			\le
		 2\left(1-\frac{1}{2C}\right)^{s}.
		 \end{split} 
		\end{align}
		Along similar lines, we can show that 
		$$
		\P (\sigma_1>s)\le \frac{1}{R_C}\left(1-\frac{1}{2C}\right)^{s-1}\left[\E[V(Z_1^1)]+\E[V(Z_1^2)]\right],
		$$
		where since $C>1$ and $\tau_1=\tilde{\tau}_{C}$, by Lemma \ref{lem:Viter}, \eqref{eq:gaS} and also by \eqref{eq:tau_tilde}, we can estimate further by writing
		\begin{align*}
			\E[V(Z_1^{\iota})]
			&=\E\left[V\left(Z_{j,\tau_1}^{x_{\iota},\mathbf{y}}\right)\right]
			=
			\left[Q_{j,\tau_1}(\mathbf{y})V\right]\left(Z_{j,j}^{x_{\iota},\mathbf{y}}\right)
			=
			\left[Q_{j,\tau_1}(\mathbf{y})V\right]\left(x_{\iota}\right)
			\\
			&\le \prod_{k=j}^{\tau_1-1}\ga (y_k) V(x_{\iota})
			+
			\sum_{k=j}^{\tau_1-1}K(y_k)\prod_{l=k+1}^{\tau_1-1}\ga (y_l)
			\\
			&\le
			\ga_{\tilde{\tau}_C,j}(C)V(x_{\iota})+S_{\tilde{\tau}_C,j}
			\le \left(1-\frac{1}{C}\right)V(x_{\iota})+C,\quad \iota=1,2
		\end{align*}
		hence we have
		\begin{align}\label{eq:sig_ge_s}
		\begin{split} 
		\P (\sigma_1>s)&\le \frac{1}{R_C}\left(1-\frac{1}{2C}\right)^{s-1}\left[\left(1-\frac{1}{C}\right)\left(V(x_1)+V(x_2)\right)+2C\right]
		\\
		&\le 
		\frac{2C}{4C^2}\left(1-\frac{1}{2C}\right)^{s-1}
		\left[V(x_1)+V(x_2)+1\right]
		\\
		&\le 
		\left[V(x_1)+V(x_2)+1\right]\left(1-\frac{1}{2C}\right)^{s}.
		\end{split}
		\end{align}
		
		Using the estimate \eqref{eq:delta_sig}, for the generating function of the time elapsed between the $k$th and $(k+1)$th visits, we get
		\begin{align}\label{eq:mom_gen1}
		\begin{split}
			\E \left(\frac{1}{\left(1-\frac{1}{4C}\right)^{\sigma_{k+1}-\sigma_k}}\middle| \F_{-\infty,\tau_{\sigma_k}}^\eps\right) 
			&= \sum_{l=1}^{\infty}
			\frac{1}{\left(1-\frac{1}{4C}\right)^{l}} \P (\sigma_{k+1}-\sigma_{k}=l\mid \overline{Z}_{\sigma_k}) 
			\\
			&\le 
			\sum_{l=1}^{\infty} 	\frac{2 \left(1-\frac{1}{2C}\right)^{l-1}}{\left(1-\frac{1}{4C}\right)^{l}} = 8C,\,k\ge 1,
		\end{split}
		\end{align} 
		and similarly, by \eqref{eq:sig_ge_s}, for $k=0$,
		\begin{align}\label{eq:mom_gen2}
		\begin{split}
		\E \left(\frac{1}{\left(1-\frac{1}{4C}\right)^{\sigma_1}}\right)
		&=\sum_{l=1}^{\infty}
		\frac{1}{\left(1-\frac{1}{4C}\right)^{l}}
		\P (\sigma_1=l)
		\le 
		\sum_{l=1}^{\infty}
		\frac{1}{\left(1-\frac{1}{4C}\right)^{l}}
		\P (\sigma_1>l-1)
		\\
		&\le 
		\left[V(x_1)+V(x_2)+1\right]
		\sum_{l=1}^{\infty}
		\frac{\left(1-\frac{1}{4C}\right)^{l-1}}{\left(1-\frac{1}{4C}\right)^{l}}
		= 4C \left[V(x_1)+V(x_2)+1\right].
		\end{split}
		\end{align}
		
		Using the recently obtained bounds \eqref{eq:mom_gen1} and \eqref{eq:mom_gen2},
		by the Markov inequality and the tower rule, for $0<m<n$, we have
		\begin{align*}
			\P (\sigma_m\ge n)&\le \left(1-\frac{1}{4C}\right)^n \E \left(\frac{1}{\left(1-\frac{1}{4C}\right)^{\sigma_m}}\right) 
			\\
			&=
			\left(1-\frac{1}{4C}\right)^n \E \left( \E \left[\frac{1}{\left(1-\frac{1}{4C}\right)^{\sigma_m-\sigma_{m-1}}}
			\middle|\F_{-\infty,\tau_{\sigma_{m-1}}}^\eps 
			\right]  \frac{1}{\left(1-\frac{1}{4C}\right)^{\sigma_{m-1}}}\right)
			\\
			&\le
			\left(1-\frac{1}{4C}\right)^n \times 8C\times\E \left(\frac{1}{\left(1-\frac{1}{4C}\right)^{\sigma_{m-1}}}\right)
			\le\ldots\\
			&\le 
			\frac{1}{2}\left[V(x_1)+V(x_2)+1\right](8C)^m\left(1-\frac{1}{4C}\right)^n
		\end{align*}
		Let us define
		$$
		m_n := \left\lfloor n\frac{\log \left(1-\frac{1}{8C}\right)-\log \left(1-\frac{1}{4C}\right)}{\log (8C)}\right\rfloor.
		$$
		Obviously, $(8C)^{m_n}\left(1-\frac{1}{4C}\right)^n\le \left(1-\frac{1}{8C}\right)^n$, and thus for $n$ is so large such that $m_n\ge 1$, we have
		$$
		\P (\sigma_{m_n}\ge n)\le 
		\frac{1}{2}\left[V(x_1)+V(x_2)+1\right]\left(1-\frac{1}{8C}\right)^n.
		$$
		
		For every $l$, $\eps_{\tau_{\sigma_l}+1}$ is independent of $\F_{0,\tau_{\sigma_l}}^\eps$ hence by \eqref{eq:glue}, we can estimate the probability of no-coupling on events when the small set is visited at least $m_n$-times as follows:
		\begin{align*}
			\P (Z_n^1\ne Z_n^2,\sigma_{m_n}<n)\le \bar{\beta}^{m_n}.
		\end{align*}
		
		Combine this bound with that one what we got for the tail probability of the visiting times, and obtain
		\begin{align*}
			\P (Z_n^1\ne Z_n^2)&\le \P (Z_n^1\ne Z_n^2,\sigma_{m_n}<n) + \P (\sigma_{m_n}\ge n) 
			\\
			&\le
			\bar{\beta}^{m_n}
			+
			\frac{1}{2}\left[V(x_1)+V(x_2)+1\right]\left(1-\frac{1}{8C}\right)^n,
		\end{align*}
		for $n$ being so large such that $m_n\ge 1$. From this, we can conclude that
		$$
			\P (Z_n^1\ne Z_n^2)\le M \left[V(x_1)+V(x_2)+1\right] \rho^n,\quad n\in\N
		$$
		holds with appropriate constants $M > 0$ and $\rho \in (0,1)$ depending on $\bar{\beta}\in (0,1)$ and $C>1$.
			
		Finally, using the definition of $L_{n,j}$ \eqref{eq:Lnj}, we get
		\begin{align*}
			\P (Z_{j,j+n}^{x_1,\mathbf{y}}\ne Z_{j,j+n}^{x_2,\mathbf{y}})
			&\le \P (Z_{j,\tau_{L_{n,j}}}^{x_1,\mathbf{y}}\ne Z_{j,\tau_{L_{n,j}}}^{x_2,\mathbf{y}})
			\\
			&=\P (Z_{L_{n,j}}^1\ne Z_{L_{n,j}}^2)
			\le
			M \left[V(x_1)+V(x_2)+1\right] \rho^{L_{n,j}},\quad n\in\N
		\end{align*}
		which completes the proof.
	\end{proof}
	
	\subsection{Controlling the deviation of $\tilde{L}_{n,j}$}

Using the notations already introduced in the previous section, we set for some $R=R_C$, 
$$q_{t,j}(C,\beta,R_C)=\P\left(\gamma_{j,t}(C)\leq 1- C^{-1}, S_{t,j}\leq C, \beta_t(R_C)\leq \beta\right).$$

\begin{lemma}\label{lowb}
There exist $C_0>1$ such that for any $C\geq C_0$, one can find $\beta=\beta_C\in (0,1)$ such that $\inf_{j\geq 0}\inf_{t\geq C+j}q_{t,j}\left(C,\beta,R_C\right)>0$.
\end{lemma}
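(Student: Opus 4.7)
The plan is to bound $q_{t,j}(C,\beta,R_C)$ from below through a union bound on the complements of its three defining events, and then calibrate $C$ first and $\beta$ second so that each complementary probability is smaller than a fixed fraction, uniformly in $t\geq C+j$ and $j\geq 0$. Concretely, I would start from
$$
q_{t,j}(C,\beta,R_C) \;\geq\; 1 - \P\bigl(\ga_{t,j}(C) > 1 - 1/C\bigr) - \P(S_{t,j} > C) - \P\bigl(\beta_t(R_C) > \beta\bigr),
$$
and treat the three terms separately using Assumption~\ref{as:main_assumptions}.

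Without loss of generality I would assume $K\geq 1$, which only enlarges the constants in A1. For the middle term, the very definition of the $d_\ell$ immediately gives $\E[S_{t,j}]\leq d_0+\sum_{\ell=1}^{t-j-1} d_\ell \leq r_0$, so Markov yields $\P(S_{t,j}>C)\leq r_0/C$. For the first term, I would apply a union bound over $\ell\in\{C,\ldots,t-j\}$ followed by Markov's inequality; since $K\geq 1$, one has $\E[\ga_{t-1}\cdots\ga_{t-\ell}]\leq d_\ell$, and hence
$$
\P\bigl(\ga_{t,j}(C) > 1 - 1/C\bigr) \;\leq\; \frac{1}{1-1/C}\sum_{\ell \geq C} d_\ell \;=\; \frac{r_C}{1-1/C}.
$$
Because $(d_\ell)$ is summable under A1, both $r_0/C$ and $r_C$ tend to $0$ as $C\to\infty$, so one can pick $C_0$ large enough that for every $C\geq C_0$ both bounds stay below $1/4$, uniformly in $t,j$.

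Fixing such a $C$ (and hence $R_C$), Assumption A2 furnishes $\beta_C\in(0,1)$ sufficiently close to $1$ so that $\sup_t \P(\beta(R_C,Y_t)>\beta_C) < 1/4$. Combining the three estimates then yields $q_{t,j}(C,\beta_C,R_C)\geq 1/4$ uniformly in $t\geq C+j$ and $j\geq 0$, which is exactly the claim.

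The step requiring the most care is the control of $\ga_{t,j}(C)$: since it is a supremum over an index set of size $t-j-C+1$, potentially unbounded as $t\to\infty$, a naive union bound could diverge. What rescues the estimate is precisely the summability $r_0=\sum_\ell d_\ell<\infty$ built into Assumption A1, which forces the tail $r_C$ to vanish with $C$. Once this observation is in place, the remainder is a routine calibration of the two parameters $C$ and $\beta$, with A2 handling the minorization factor in a fully decoupled manner.
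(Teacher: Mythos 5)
Your proof is correct and follows essentially the same route as the paper's: a union bound reducing the claim to three tail probabilities, Markov's inequality together with the summability $r_0=\sum_\ell d_\ell<\infty$ (and the $K\ge 1$ normalization, which the paper also uses) to make the first two terms small for large $C$, and Assumption A2 to choose $\beta_C$ close to $1$ afterwards. The only cosmetic differences are that the paper first compares $q_{t,j}$ to the two-event probability $p_{t,j}$ before bounding the latter, and applies Markov with a power $s$ of the drift coefficients rather than $s=1$.
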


\paragraph{Proof of Lemma \ref{lowb}.} Let $C>1$ and set $p_{t,j}(C)=\P\left(\gamma_{j,t}(C)\leq 1- C^{-1}, S_{t,j}\leq C\right)$. 
We have 
$$\sup_{j\geq 0}\sup_{t\geq C+j-1}\left\{p_{t,j}(C)-q_{t,j}(C,\beta,R_C)\right\}\leq \sup_{t\geq 0}\P\left(\beta_t(R_C)>\beta\right)$$
which goes to $0$ when $\beta\nearrow 1$.
Moreover
$$p_{t,j}(C)\geq  1-\P\left(\gamma_{t,j}(C)>1-C^{-1}\right)-\P\left(S_{t,j}>C\right).$$
Moreover, from Markov inequality and the union bound, we have for $C\geq 2$,
$$\P\left(\gamma_{t,j}(C)>1-C^{-1}\right)\leq \sum_{\ell\geq C} \left(1-C^{-1}\right)^{-s}d_{\ell}(s)\leq 2^s r_C(s) .$$
and
$$\P\left(S_{t,j}>C\right)\leq C^{-s}\E\left(S_{t,j}^s\right)\leq C^{-s}r_0(s).$$
We deduce that if $C$ is large enough, say $C\geq C_0$, then $\inf_{j\geq 0}\inf_{t\geq C+j-1}p_{t,j}(C)>0$. For such a $C$, the first part of the proof 
allows to conclude the proof, choosing $\beta=\beta_C$ sufficiently close to $1$.$\square$

Next, for a pair $(C,\beta)$ ensuring that $\inf_{j\geq 0}\inf_{t\geq C+j}q_{t,j}\left(C,\beta,R_C\right)>0$, we denote by $\widetilde{L}_{n,j}(\omega)$ the number of time points $C+j\leq t\leq n+j$ such that 
$$\gamma_{t,j}(C)_{\omega}\leq 1-1/C,\quad S_{t,j}(\omega)\leq C\mbox{ and }\beta_t(R_C)_{\omega}\leq \beta.$$ 
Our aim is  to control the deviation of $\widetilde{L}_{n,j}$. One could do this by using the $\alpha-$mixing property of $Y$. 

We recall the notation
$$r_n:=\sum_{\ell \geq n}d_{\ell}.$$
In the following lemma, we give a general upper bound for the quantity $b(n)$ and then for the probability that the coupling time exceeds $n$. This upper bound involves quantities of type $r_i$ and we also give sufficient conditions which entail the validity of {\bf A1}. Controlling the decay of this quantity is difficult in general because it involves expectations of products of dependent random variables. We then precise some rates under specific conditions on the drift parameters, e.g. when the random drift coefficients takes values in $[0,1]$ (but without being necessarily uniformly smaller than $1$) or when the environment satisfies uniformly mixing conditions such as $\phi-$mixing properties.

\begin{lemma}\label{conc}
Suppose that conditions (\ref{eq:drift}) and (\ref{eq:smallset}) are valid for a drift function $V$ and that Assumption {\bf A2} is satisfied.  Set $H_{x_1}(X_j)=M\left(1+V(x_1)+V(X_j)\right)$ where $x_1\in \X$ and 
$M$ is the positive constant defined in Lemma \ref{lem:tau_chain}. The following assertions are valid.
\begin{enumerate}
\item
There exists two three positive constants $c_1,c_2,c_3$ and a constant $b\in (0,1)$ such that for any $n\geq C$,  
$$\P\left(\widetilde{L}_{n,j}\leq c_1 n\right)\leq c_2\min_{C\leq i<\widetilde{q}<c_3 n}\left\{r_i+b^{n/\widetilde{q}}+\alpha^Y\left(\widetilde{q}+1-i\right))\right\}.$$
In particular, we deduce that for the coupling of Lemma \ref{lem:tau_chain},
$$b(n) \leq \sup_{j\geq 0}\E\left[H_{x_1}(X_j)\right]\rho^{c_1 n}+ c_2\min_{C\leq i<\widetilde{q}<c_3 n}\left\{r_i+b^{n/\widetilde{q}}+\alpha^Y\left(\widetilde{q}+1-i\right))\right\}.$$
\item
Suppose that $\sup_{j\geq 0}\E K_j^k<\infty$ for some $k>1$ and that the sequence $\left(\gamma_j\right)_{j\geq 1}$ takes values in the interval $[0,1]$ and that 
there exists $\eta\in (0,1)$ such that $\inf_{j\geq 1}\P\left(\gamma_j<\eta\right)>0$.  If $\alpha_Y(m)=O(m^{-a})$ for some $a>1$, then for $b=a(k-1)/k$, we get $b(n)=O\left(\left(\log n\right)^{2b-1}  n^{-b+1}\right)$. 
Now if $\alpha_Y(m)=O\left(\kappa^m\right)$ for some $\kappa\in (0,1)$, then there exists $\overline{\kappa}\in (0,1)$ such that $b(n)=O\left(\overline{\kappa}^{\frac{\sqrt{n}}{\log n\log\log n}}\right)$.
\item 
Suppose that $\sup_{j\geq 0}\Vert \gamma_j\Vert_{\infty}<\infty$, there exists $k>0$ such that $\sup_{j\geq 0}\E K_j^k<\infty$ and $\sup_{j\geq 0}\E\left[\gamma_j\right]<1$. Suppose furthermore that $\lim_{n\rightarrow \infty}\phi^{Y}(n)=0$.
Then, replacing $V$ by $V^s$ with $s>0$ sufficiently small, we have $d_{\ell}=O\left(\rho^{\ell}\right)$ for some $\rho\in (0,1)$ and there exists $\overline{\kappa}\in (0,1)$ such that $b(n)=O\left(\overline{\kappa}^{\sqrt{n}}\right)$. 
\item 
Finally, suppose that the long-term contractivity condition holds true, i.e. 
$d_{\ell}=O\left(\rho^{\ell}\right)$ for some $\rho\in (0,1)$.  If $\alpha_Y(m)=O(m^{-a})$ for some $a>1$, then $b(n)=O\left(\log^a(n) n^{-a}\right)$. 
Now if $\alpha^Y(m)=O\left(\kappa^m\right)$ for some $\kappa\in (0,1)$, then there exists $\overline{\kappa}\in (0,1)$ such that $$b(n)=O\left(\overline{\kappa}^{\sqrt{n}}\right).$$

\end{enumerate}
\end{lemma}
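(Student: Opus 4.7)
The proof builds on the quenched coupling estimate from Lemma \ref{lem:tau_chain}. Conditioning on $\mathbf{Y}$ and $X_j$ and splitting according to whether $L_{n,j}$ exceeds a fraction of $n$, one writes
\begin{equation*}
b(n) \;\le\; \sup_{j\ge 0} \E[H_{x_1}(X_j)]\, \rho^{c_1 n} \;+\; \sup_{j\ge 0} \P(L_{n,j} \le c_1 n),
\end{equation*}
since the non-coupling probability on the bad event is bounded by $1$. Because the $\tau_i$'s subsample the $\tilde{\tau}_i$'s by a factor $C$, we have $L_{n,j} \ge \widetilde{L}_{n,j}/C - 1$, so the task reduces to a concentration inequality for $\widetilde{L}_{n,j}$. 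The uniform bound on $\E[H_{x_1}(X_j)]$ is supplied by Lemma \ref{mom}.

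For the concentration of $\widetilde{L}_{n,j}$, the plan is to truncate the defining event to fixed depth $i$: replace $\gamma_{t,j}(C)$ by $\sup_{C\le \ell\le i}\gamma_{t-1}\cdots\gamma_{t-\ell}$ and $S_{t,j}$ by its partial sum up to index $i-1$, producing an indicator $\ind_{A_t^{(i)}}$ measurable with respect to $(Y_{t-i},\ldots,Y_t)$. Markov's inequality applied to the tail $\sum_{\ell \ge i} K_{t-\ell-1}\gamma_{t-1}\cdots\gamma_{t-\ell}$ gives
\begin{equation*}
\sup_t \E\big|\ind_{A_t^{(i)}}-\ind_{A_t}\big| \;\le\; c\, r_i.
\end{equation*}
Lemma \ref{lowb} ensures that $\inf_t \P(A_t^{(i)})$ is bounded away from $0$ for a suitable pair $(C,\beta)$ and $i$ large enough. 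Partitioning the window $[C+j,n+j]$ into blocks of length $\widetilde{q}$ and keeping one representative per block, pairs of representatives are separated by at least $\widetilde{q}-i+1$ time units, so the corresponding truncated indicators are approximately independent with error $\alpha^Y(\widetilde{q}+1-i)$. A Hoeffding-type argument, coupling to an i.i.d.\ Bernoulli sequence, then yields exponential concentration of order $b^{n/\widetilde{q}}$ for some $b\in(0,1)$ for the number of successes among block representatives. Summing the three errors---truncation $r_i$, concentration $b^{n/\widetilde{q}}$, and mixing $\alpha^Y(\widetilde{q}+1-i)$---and minimizing over the parameters produces the first displayed inequality.

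Assertions 2--4 are then obtained by plugging appropriate decay rates of $r_i=\sum_{\ell\ge i}d_\ell$ into the general bound and optimizing $(i,\widetilde{q})$. Under long-term contractivity (assertion 4), $d_\ell=O(\rho^\ell)$ gives $r_i=O(\rho^i)$, and the choices $i\sim \widetilde{q}/2$ with $\widetilde{q}\sim \sqrt{n}$ (geometric $\alpha^Y$) or $\widetilde{q}\sim n/\log n$ (polynomial $\alpha^Y$) balance the three error terms to yield the stated rates. Assertion 3 first passes from $V$ to $V^s$ via Lemma \ref{lem:V^delta}; invoking Lemma \ref{products} one shows that under $\phi$-mixing with $\sup_j\E\gamma_j<1$ and $\gamma$ bounded, $\sup_t\E\prod_{i=1}^\ell \gamma_{t+i}^s = O(\rho^\ell)$ for $s$ sufficiently small, reducing to case 4. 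Assertion 2, which treats $\gamma_j\in[0,1]$ without a uniform contraction, is the most delicate: apply Hölder with exponent $k$ to detach $K$ from the product, then use $\inf_j\P(\gamma_j<\eta)>0$ combined with the $\alpha$-mixing of $Y$ in a renewal-type argument to obtain polynomial decay of $\E\prod\gamma_{t+i}$, plug the resulting bound on $r_i$ into the general estimate, and optimize to arrive at the rates $n^{-b+1}(\log n)^{2b-1}$ and $\bar\kappa^{\sqrt{n}/(\log n \log\log n)}$.

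The main technical hurdle is the simultaneous management of truncation, mixing, and block concentration in assertion 1, with the constants $c_1,c_2,c_3,b$ kept independent of $n$ and $j$. The most delicate secondary point is assertion 2, where a renewal-type argument is needed to extract polynomial decay of $\E\prod\gamma_{t+i}$ from $\alpha$-mixing and a mere positivity condition on $\{\gamma<\eta\}$, rather than from a uniform contraction.
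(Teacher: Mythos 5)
Your proposal is correct and follows essentially the same route as the paper: split $b(n)$ via the event $\{\widetilde{L}_{n,j}\le c_1 n\}$, truncate the defining indicators to depth $i$ with Markov-type error $O(r_i)$, invoke Lemma \ref{lowb} for the uniform lower bound on the success probability, and apply an exponential inequality for bounded strongly mixing sequences (the paper cites Rio's inequality \eqref{sufexp1} rather than building the block/Bernoulli coupling by hand, but these are the same mechanism), then optimize $(i,\widetilde{q})$ for points 2--4. The only cosmetic deviations are that the paper extracts the decay of $\E[\gamma_{j+1}\cdots\gamma_{j+n}]$ in point 2 via a deviation bound for $\sum_t\log\max(\epsilon,\gamma_t)$ (Lemma \ref{products}) rather than a renewal argument, and point 3 rests on Lemma \ref{lem:ga} rather than Lemma \ref{products}.
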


\paragraph{Note.}
Point $4$ of Lemma \ref{conc} is satisfied for instance when 
$\Vert \gamma_t\Vert_{\infty}\leq \lambda_t$ with
$$\chi:=\limsup_n \sup_{j\geq 0}\left\{\frac{1}{n}\sum_{t=j+1}^{j+n} \log\left(\lambda_t\right)\right\}<0.$$
In particular if $\lambda_t\leq \lambda<1$ for any $t\geq 0$ but one can consider some inhomogeneous cases for which $\lambda_t$ exceeds $1$ from time to time.

\paragraph{Proof of Lemma \ref{conc}.}
\begin{enumerate}
\item
Since  $\widetilde{L}_n$ is a partial sum of binary variables, each involving an unbounded number of coordinates, we first approximate 
$\widetilde{L}_n$. We  set $S_{t,j}=S_{t,j,i}+T_{t,j,i}$ with
 $$S_{t,j,i}=\sum_{\ell=1}^{\min(i,t-j-1)} \gamma_{t-1}\cdots \gamma_{t-\ell} K_{t-\ell-1},$$ with $C\leq i\leq n$ to fix latter and 
$$H_{t,j,i}=\sup_{C\leq \ell\leq \min(i,t-j)}\gamma_{t-1}\ldots\gamma_{t-\ell},\quad G_{t,j,i}=\sup_{\min(i+1,t-j+1)\leq \ell\leq t-j}\gamma_{t-1}\ldots\gamma_{t-\ell},$$
 with the convention that $T_{t,j,i}=G_{j,j,i}=0$ if $t\leq i+j$. One can use the inequalities
$$\mathds{1}_{S_{t,j}\leq C}\geq \mathds{1}_{S_{t,j,i}\leq C/2}-\mathds{1}_{T_{t,j,i}\geq C/2},$$ 
$$\mathds{1}_{\gamma_{t,j}\leq 1-1/C}\geq \mathds{1}_{H_{t,j,i}\leq 1-1/C}-\mathds{1}_{G_{t,j,i}\geq 1-1/C}$$
to get
\begin{eqnarray*}
\widetilde{L}_{n,j}&\geq &\sum_{t=C+j}^{n+j} \mathds{1}_{S_{t,j,i}\leq C/2,H_{t,j,i}\leq 1-1/C,\beta_t(R_C)\leq \beta}-\sum_{t=C+j}^{n+j}\mathds{1}_{T_{t,j,i}\geq C/2}-\sum_{t=C+j}^{n+j} \mathds{1}_{G_{t,j,i}\geq 1-1/C}\\
&:=&L_{n,j,1}-L_{n,j,2}-L_{n,j,3}.
\end{eqnarray*}
For some positive integers $k$ and $k'$, we have
\begin{eqnarray*}
\P\left(\widetilde{L}_{n,j}\leq k\right)&\leq& \P\left(L_{n,j,1}\leq k+L_{n,j,2}+L_{n,j,3}\right)\\
&\leq& \P\left(L_{n,j,1}\leq k+2k'\right)+\P\left(L_{n,j,2}>k'\right)+\P\left(L_{n,j,3}>k'\right):=p_{j,1}+p_{j,2}+p_{j,3}.
\end{eqnarray*}

From Markov's inequality
$$p_{j,2}\leq \sum_{t=i+j}^{n+j}\P\left(T_{t,j,i}\geq C/2\right)/k'\leq \frac{2n}{Ck'}r_i.$$
We deduce that 
$$p_{j,2}\leq \frac{D n r_i}{k'}$$
for a suitable constant $D>0$. 
On the other hand, we have 
\begin{eqnarray*}
p_{j,3}&\leq& \sum_{t=i+j+1}^{n+j}\P\left(G_{t,j,i}\geq 1-1/C\right)/k'\\
&\leq& \frac{n}{k'}\sum_{\ell=i+1}^{t-j}\P\left(\gamma\left(Y_{t-1}\right)\cdots\gamma\left(Y_{t-\ell}\right)\geq 1-1/C\right)\\
&\leq & \frac{n}{k'(1-1/C)}\sum_{\ell=i+1}^{t-j}d_{\ell}\\
&\leq & \frac{D'n r_i}{k'}
\end{eqnarray*}
for another suitable constant $D'>0$. We have used the fact that $K\geq 1$ and Markov inequality here.
It now remains to bound $p_1$. Setting
\begin{eqnarray*}
q&:=&\inf_{j\geq 0, t\geq j+C}\P\left(S_{t,j}\leq C/2,\gamma_{t,j}(C)\leq 1-1/C,\beta_t(R_C)\leq\beta\right)\\
&\leq& \inf_{j\geq 0, t\geq j+C}\P\left(S_{t,j,i}\leq C/2,H_{t,j,i}\leq 1-1/C,\beta_t(R_C)\leq \beta\right),
\end{eqnarray*}
we have $q>0$ for a large $C$, and 
$$U_{t,j,i}=\mathds{1}_{S_{t,j,i}\leq C/2,H_{t,j,i}\leq 1-1/C,\beta_t(R_C)\leq \beta}-\P\left(S_{t,j,i}\leq C/2,H_{t,j,i}\leq 1-1/C,\beta_t(R_C)\leq \beta\right),$$
we have, setting $\widetilde{n}=n-C+1$,
$$p_{j,1}\leq  \P\left(\sum_{t=j+C}^{j+n} U_{t,j,i}\leq k+2k'-\widetilde{n}q\right)\leq \P\left(\sum_{t=j+C}^{j+n} U_{t,j,i}/\widetilde{n}\leq d-q\right),$$
where we chose $k=k'=[d\widetilde{n}/3]$ with some $0<d<q$. Note that $k\geq c_1 n$ for a suitable constant $c_1>0$.

We then deduce the existence of  $\overline{D}>0$ such that
\begin{equation}\label{goodinter}
\P\left(\widetilde{L}_n\leq c_1 n\right)\leq  \inf_{C\leq i\leq n}\left\{\overline{D} r_i+\P\left(\sum_{t=j+C}^{n+j} U_{t,j,i}/\widetilde{n}\leq d-q\right)\right\}.
\end{equation}

We next use an exponential inequality for strongly mixing sequences. We first note that 
$$\alpha_{U_{\cdot,j,i}}(m)\leq \alpha_Y(m-i),\quad m\geq i.$$

Using inequality \eqref{sufexp1}, we deduce that there exist two positive constants $\widetilde{C}_1,\widetilde{C}_2$ such that
$$\P\left(\sum_{t=1}^{\widetilde{n}} U_{t,i}/\widetilde{n}\leq d-q\right)\leq\widetilde{C}_1\inf_{i<\widetilde{q}<2(q-d)\widetilde{n}/7} \left\{\exp\left(-\widetilde{C}_2 \widetilde{n}/\widetilde{q}\right)+\alpha_Y\left(\widetilde{q}+1-i\right)\right\}.$$
Finally, we use the fact that $\inf_{n\geq C}\widetilde{n}/n$ is lower bounded by $1/C$ to get the first bound. 

The second bound of the lemma can be obtained using the event $\left\{\widetilde{L}_{n,j}\leq c_1n\right\}$ and its complement and using the fact that 
$$\E\left[\min\left\{H_{x_1}(X_j)\rho^{L_{n,j}},1\right\}\right]\leq \P\left(L_{n,j}\leq c_1n\right)+\E\left[H_{x_1}(X_j)\right]\rho^{c_1n}.$$ 
\item
The second point follows from Lemma \ref{products} $4.$ and the previous point, choosing $i=[\widetilde{q}/2]$ and $\widetilde{q}\sim c n/\log n$  for power decays and $\widetilde{q}\sim\sqrt{n}$ for geometric decays.
\item 
For $s\in (0,1)$ to be fixed later, we denote by $d_{\ell}(s)$ and $r_i(s)$ the same quantities as $d_{\ell}$ and $r_i$, replacing $(\gamma_t,K_t)$ by $\left(\gamma_t^s,K_t^s\right)$ for any $t\geq -1$. From Holder inequality, we have
$$d_{\ell}(s)\leq \sup_{t\geq 0}\E^{\frac{s}{k}}\left[K_t^k\right]\cdot \E^{\frac{k-s}{k}}\left[\gamma_{t+1}^{\frac{ks}{k-s}}\cdots \gamma_{t+\ell}^{\frac{ks}{k-s}}\right].$$
From Lemma \ref{lem:ga}, one can choose $s$ small enough  
such that $d_{\ell}(s)=O\left(\kappa^{\ell}\right)$ for some $\kappa\in (0,1)$. Fixing such a value for $s$, we impose $i\sim \sqrt{n}$ and we conclude that 
$$\P\left(\widetilde{L}_n\leq c_1 n\right)\leq D'\kappa^{\sqrt{n}}$$
for some $D'>0$ and $\kappa\in (0,1)$. We then get the bound on $b(n)$, proceeding as in the proof of the first point of the lemma.
\item 
For the last point, we use the general bound on $b(n)$ given in the first point of the lemma. Since $r_i=O\left(\rho^i\right)$, we choose $i=[\widetilde{q}/2]$ and $\widetilde{q}\sim c n/\log n$ for power decays of the mixing coefficients and $\widetilde{q}\sim \sqrt{n}$ for geometric decays, which leads to the result.$\square$
\end{enumerate}

$\square$
\bigskip

The following lemma gives some bound for $r_n(s)$ in the specific case of random coefficients $\gamma(Y_t)$ bounded by $1$. This quantity can be bounded from exponential inequalities for mixing random variables. We investigate 
two cases with a power decay or a geometric decay for the $\alpha-$mixing coefficients.

\begin{lemma}\label{products}
Suppose that the sequence $\left(\gamma_j\right)_{j\geq 1}$ takes values in the interval $[0,1]$ with $\sup_{j\geq 1}\E\left[\gamma_j\right]<1$. In what follows, we set 
$$p_n(\delta)=\sup_{j\geq 0}\P\left(\gamma_{j+1}\cdots\gamma_{j+n}>\exp(-n\delta)\right).$$
The following statements are valid.
\begin{enumerate}
\item
There exists some positive constants $\delta_1, \delta_2,c_1$ and $c_2$ such that
$$p_n(\delta_1)\leq c_1\left\{\exp\left(-c_2 n/q\right)+\alpha^{\gamma}(q+1)\right\},$$
for all $q\in (1,\delta_2 n)$.
\item
Suppose that $\alpha_{\gamma}(n)=O\left(e^{-cn}\right)$ for some $c>0$. In this case, there exist some positive constants $c_1,c_2,\delta_1$  such that 
$$p_n(\delta_1)\leq c_1 \exp\left(-\frac{c_2 n}{\log n\log\log n}\right).$$
\item
Suppose that there exists $\lambda>0$ such that $L:=\sup_{j\geq 0}\E\left[\exp(\lambda K_j)\right]<\infty$. Then, for any $\delta_1>0$, we have the bound
$$d_n\leq \sup_{j\geq 0}\E(K_j)e^{-n\delta_1}-\frac{L}{\lambda}p_n(\delta_1)\log p_n(\delta_1)+L p_n(\delta_1).$$
\begin{itemize}
\item If $\alpha^Y(m)=O(m^{-a})$ then $d_n=O\left(n^{-a}\log^{a+1}n\right)$ and $r_n=O\left(n^{-a+1}\log^{a+1} n \right)$. 
\item  If $\alpha^Y(m)=O\left(\kappa^m\right)$ for some $\kappa\in (0,1)$, then there exists $\rho\in (0,1)$ such that $r_n=O\left(\rho^{\frac{n}{\log n\log\log n}}\right)$.
\end{itemize}
\item
Suppose that $L:=\sup_{j\geq 0}\E K_j^k<\infty$ for some $k>1$. Then for any $\delta_1>0$, we have
$$d_n\leq L^{1/k}\left\{p_n(\delta_1)^{\frac{k-1}{k}}+e^{-n s\delta_1}\right\}.$$
\begin{itemize}
\item If $\alpha^Y(m)=O(m^{-a})$, then if any $b:=a(k-1)/k>1$, we have $r_n=O\left(\left(\log n\right)^b/n^{b-1}\right)$. 
\item  If $\alpha^Y(m)=O\left(\kappa^m\right)$ for some $\kappa\in (0,1)$, then there exists $\rho\in (0,1)$ such that $r_n=O\left(\rho^{\frac{n}{\log n\log\log n}}\right)$.
\end{itemize}
\end{enumerate}
\end{lemma}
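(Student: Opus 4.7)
The plan is to reduce all four parts to controlling the lower tail of $S_n^j := \sum_{i=1}^n(-\log\gamma_{j+i})$, since $p_n(\delta) = \sup_j \P(S_n^j < n\delta)$, and then to combine the Bernstein-type inequality \eqref{sufexp1} for $\alpha$-mixing sums (as already invoked in Lemma \ref{conc}) with a tail-splitting trick that handles the $K_j$ factor.

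For Part 1, the difficulty is that $-\log\gamma_{j+i}$ is unbounded. The hypothesis $\sup_j\E[\gamma_j]=c<1$ combined with Markov's inequality yields $\sup_j\P(\gamma_j>a)\le c/a$ for any $a\in(c,1)$; fixing $a^*\in(c,1)$ and setting $b^*:=-\log a^*>0$, the truncated variables $\bar V_{j,i}:=(-\log\gamma_{j+i})\wedge b^*\in[0,b^*]$ satisfy $\inf_{j,i}\E[\bar V_{j,i}]\ge b^*(1-c/a^*)=:\mu_*>0$ and inherit the $\alpha$-mixing coefficients of $(\gamma_j)$. Since $S_n^j\ge \sum_i \bar V_{j,i}$, it suffices to estimate $\P\bigl(\sum_i\bar V_{j,i}<n\delta_1\bigr)$ for any $\delta_1<\mu_*$. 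Applying \eqref{sufexp1} to the centred bounded $\alpha$-mixing sequence $(\bar V_{j,i}-\E[\bar V_{j,i}])$ at deviation level $n(\mu_*-\delta_1)$ delivers the announced bound $p_n(\delta_1)\le c_1\{\exp(-c_2 n/q)+\alpha^\gamma(q+1)\}$ for $q$ in the stated range. Part 2 is then a direct specialisation to geometric $\alpha^\gamma$, optimising $q$ within the admissible interval.

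For Parts 3 and 4, set $A_n^j := \{\gamma_{j+1}\cdots\gamma_{j+n}>e^{-n\delta_1}\}$, so that $\P(A_n^j)\le p_n(\delta_1)$, and decompose
\[
d_n \le e^{-n\delta_1}\sup_{j\ge 0}\E[K_j] + \sup_{j\ge 0}\E\bigl[K_j\,\ind_{A_n^j}\bigr].
\]
Under the exponential-moment hypothesis of Part 3, a Fenchel--Young/entropy inequality applied to the pair $(\lambda K_j,\ind_{A_n^j})$ gives a bound of the claimed form $-(L/\lambda)p_n\log p_n + Lp_n$, after absorbing inessential constants using $\sup_j\E[K_j]\le (1/\lambda)\log L$. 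Under the polynomial-moment hypothesis of Part 4, Hölder's inequality with conjugate exponents $(k,k/(k-1))$ gives instead $\E[K_j\,\ind_{A_n^j}]\le L^{1/k}p_n(\delta_1)^{(k-1)/k}$, while $\E[K_j]\le L^{1/k}$ follows from Jensen's inequality. In both regimes, substituting the estimate of Part 1 and summing $r_n=\sum_{\ell\ge n}d_\ell$ yields the announced rates: for $\alpha^Y(m)=O(m^{-a})$ the balance $q\sim n/\log n$ gives $p_n(\delta_1)=O((\log n)^a/n^a)$, which propagates, through an extra $\log$ factor in Part 3 or a $(k-1)/k$-power in Part 4, to the stated $r_n$; the geometric-mixing case is handled analogously by a $q\sim\sqrt n$-type balance carried through to the summation.

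The main technical obstacle is the entropy step in Part 3: obtaining the precise form $-(L/\lambda)p\log p + Lp$ requires the variational formulation of $\log \E[e^{\lambda K_j}]$ (or equivalently a careful optimisation of the split $\E[K_j\ind_A]\le M\P(A)+\int_M^\infty\P(K_j>t)\,dt$ at $M\propto -\log p_n/\lambda$) rather than a crude Hölder estimate, as otherwise the logarithmic factor that drives the rate would be lost. Everything else is systematic bookkeeping over \eqref{sufexp1}, the decomposition of $d_n$, and the summation defining $r_n$.
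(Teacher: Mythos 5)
Your treatment of Part 1 matches the paper's: the truncation $\bar V_{j,i}=(-\log\gamma_{j+i})\wedge b^*$ is exactly the paper's $-\log\max(\epsilon,\gamma_{j+i})$ with $\epsilon=a^*$, the uniform negative bound on the truncated mean is obtained the same way, and both arguments then feed the centred bounded sequence into the blocking inequality \eqref{sufexp1}. Likewise your Part 3 via ``optimisation of the split at $M\propto-\log p_n/\lambda$'' is precisely the paper's choice $t_n=-\log p_n(\delta_1)/\lambda$ in the bound $\E[K_j\ind_{A_n^j}]\le\int_0^\infty\min\{p_n(\delta_1),\P(K_j>t)\}\,\dint t$, and your Part 4 H\"older step together with the power-decay bookkeeping ($q\sim n/\log n$, then summing $d_\ell$ to get $r_n$) agrees with what the paper does.

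The genuine gap is in the geometric-mixing regime. You propose to obtain Part 2, and the geometric bullets of Parts 3 and 4, by ``optimising $q$'' in the Part 1 bound and by ``a $q\sim\sqrt n$-type balance''. But the Part 1 bound is $c_1\{\exp(-c_2 n/q)+\alpha^{\gamma}(q+1)\}$, and with $\alpha^{\gamma}(q)=O(e^{-cq})$ the optimal trade-off is $q\asymp\sqrt n$, which yields only $\exp(-c'\sqrt n)$. The lemma claims the strictly stronger rate $\exp\left(-c_2 n/(\log n\log\log n)\right)$, and correspondingly $r_n=O\left(\rho^{n/(\log n\log\log n)}\right)$; since $n/(\log n\log\log n)\gg\sqrt n$, this cannot be extracted from \eqref{sufexp1} by any choice of $q$. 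The paper instead invokes Proposition \ref{exp2}, the Bernstein-type inequality of Merlev\`ede, Peligrad and Rio for bounded, geometrically $\alpha$-mixing sequences, whose exponent $C\delta^2 n/(M^2+M\delta\log n\log\log n)$ is exactly what produces the $n/(\log n\log\log n)$ decay, and this bound is then carried through Parts 3 and 4. Without importing such a sharper deviation inequality, your argument establishes Part 2 and the geometric sub-cases of Parts 3 and 4 only with $\sqrt n$ in the exponent, which is weaker than the stated conclusion.
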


\paragraph{Proof of Lemma \ref{products}.}
\begin{enumerate}
\item
Under our assumptions, $m:=\sup_{j\geq 1}\E\left[\gamma_j\right]<1$. 
Taking $\eta\in (m,1)$, Markov inequality yields to $\sup_{j\geq 1}\P\left(\gamma_j\geq \eta\right)<1$ and then $\inf_{j\geq 1}\P\left(\gamma_j<\eta\right)>0$.
Let $\epsilon\in (0,1)$ and set $W_j=\log \max(\epsilon,\gamma_j)$, $Z_j=W_j-m_j$ with $m_j=\E W_j$.  Observe that $\sup_{j\geq 1}m_j\leq -h$ where 
$$h=-\log\left(\max(\epsilon,\eta)\right)\inf_{j\geq 1}\P\left(\gamma_j<\eta\right)>0.$$
Define $\delta_1=h':= h/2$. We have 
$$\P\left(\gamma_{j+1}\cdots\gamma_{j+n}>\exp(-\delta_1 n)\right)\leq \P\left(\sum_{i=1}^n Z_{j+i}>nh'\right).$$
The result is then a consequence of inequality (\ref{sufexp1}).
\item
The proof is similar to that of the first point but using Proposition \ref{exp2} instead of inequality (\ref{sufexp1}).
\item
For the bound on $d_n$, we use the inequality
$$\E\left[K_j\gamma_{j+1}\cdots \gamma_{j+n}\right]\leq \E(K_j)e^{-n \delta}+\int_0^{\infty}\min\left\{p_n(\delta_1),\P(K_j>t)\right\}dt.$$
Setting $t_n=-\frac{\log p_n(\delta_1)}{\lambda}$ and using the bound $\P(K_j>t)\leq L e^{-\lambda t}$, we obtain
$$\E\left[K_j\gamma_{j+1}\cdots \gamma_{j+n}\right]\leq  \E(K_j)e^{-n \delta}+ L t_n p_n(\delta_1)+L\int_{t_n}^{\infty}\exp(-\lambda t)dt,$$
which leads to the result. The two other bounds are obtained from the two first points and the comparison between a series and an integral. 
For power decays, one can apply the second point by choosing $q\sim n/\log n$ in order to get the result.  
\item
From Hölder inequality, we get setting $p=k/(k-1)$,
$$\E\left[K_j\gamma_{j+1}\cdots \gamma_{j+n}\right]\leq \E^{1/k}\left(K_j^k\right)\E^{1/p}\left[\gamma_{j+1}^p\cdots\gamma_{j+n}^p\right]$$
and the result follow by considering the event $\left\{\gamma_1\cdots \gamma_n\leq e^{-n \delta}\right\}$ and its complement.
The bounds for $r_n$ follows as in the previous point. 

\end{enumerate}
$\square$

\subsection{End of the proof of Theorem \ref{thm:A12fromMixing}}

From Lemma \ref{conc}, we know that there exists a coupling such that 
$$b(n)\leq \widetilde{c} \rho^{c_1 n}+c_2\min_{C\leq i<q<c_3n}\left\{r_i+b^{n/q}+\alpha^{Y}(q+1-i)\right\},$$
for some positive constants $\widetilde{c},c_1,c_2,c_3$ and $C$ and $\rho,b\in (0,1)$. Setting $\kappa=\max(\rho^{c_1},b)$ and $c=2\max(c_2,\widetilde{c})$, we get 
$$b(n)\leq c\min_{C\leq i<q<c_3n}\left\{r_i+b^{n/q}+\alpha^{Y}(q+1-i)\right\}.$$
Since $r_i$ does not go to $0$ if $i\leq C$ and $b_{n/q}$ does not go to $0$ when $q\geq c_3n$, one can replace the condition $C\leq i<q<c_3 n$ by $1\leq i<q\leq n$ in the expression of the minimum, changing the constant $c$ is necessary. This leads to the result. \qed

\subsection{Proof of Corollary \ref{cor:A12fromMixing}}
The proof is a consequence of Theorem \ref{thm:A12fromMixing} and Lemma \ref{lem:coupl_trans_mix}, choosing $m=[n/2]$. In this case, we have $\alpha^{Y}(m+1)\leq \alpha^{Y}(q-i+1)$ for any $1\leq i<q\leq n-m$. Changing the constant $c$ given in Theorem \ref{thm:A12fromMixing} by $2c$ leads to the result. \qed

\subsection{Proof of Theorem \ref{thm:stac_coupling}}\label{ap:stac_coupling:proof}

According to Lemma \ref{lem:tau_chain}, there  exist constants $M > 0$ and $\rho \in (0,1)$, depending on the choice of $C$ and $\bar{\beta}$, such that
	$$
	\P (Z_{0,n}^{x_1,\mathbf{y}}\ne Z_{0,n}^{x_2,\mathbf{y}})
	\le 
    \min
    \left\{1,
	M(1+V(x_1)+V(x_2))\rho^{L_{n,0}}
    \right\},\quad n\in\N,
	$$
where the exponent $L_{n,0}$ heavily depends on $\mathbf{y}$ but it is independent of $x_1$ and $x_2$. Proceeding in the same way as in the proof of point 1 of Lemma \ref{conc}, we can estimate
\begin{align*}
  \min\left\{1,
	M(1+V(x_1)+V(x_2))\rho^{L_{n,0}}
    \right\}\le \ind_{L_{n,0}(\mathbf{y})\le c_1 n}
    +
    M(1+V(x_1)+V(x_2))\rho^{c_1 n}.
\end{align*}

Using the above bound, by the measure decomposition theorem, we can write
\begin{align*}
\P (\tau\ge n)&\le \P (Z_{0,n}^{X_0}\ne Z_{0,n}^{X_0^*}) \\
&=
\int_{\Y^\N} 
\left[ 
\int_{\X\times\X}
\P (Z_{0,}^{x_0,\mathbf{y}}\ne Z_{0,n}^{x_0^*,\mathbf{y}}) 
\law{X_0}(\dint x_0)\otimes \law{X_0|\mathbf{Y}=\mathbf{y}}(\dint x_0^*)
\right]
\law{\mathbf{Y}}(\dint \mathbf{y}) 
\\
&\le \P (L_{n,0}\le c_1 n) + M(1+\E[V(X_0)]+\E[V(X_0^*)])\rho^{c_1 n}. 
\end{align*}

In the proof of Lemma 2.6, we showed that \(\E[V(X_0^*)]<\infty\). Using this result together with part 1 of Lemma~\ref{conc}, and following the concluding steps in the proof of Theorem~\ref{thm:A12fromMixing}, we obtain the stated upper bound for the tail probability \(\P(\tau \ge n)\), \(n \ge 1\).
\qed

\section{Annex}

The following result can be found in \cite{Rio}, see Theorem $6.1$.

\begin{proposition}\label{exp1}
Let $(X_i)_{i\geq 1}$ be a sequence of real-valued random variables such that $\Vert X_i\Vert_{\infty}\leq M$ for any positive $i$
and $\left(\alpha_n\right)_{n\geq 0}$ be its sequence of strong mixing coefficients.  Set $X_i=0$ for for any $i>n$ and let 
$S_n=\sum_{i=1}^n\left(X_i-\E X_i\right)$.  Let $q>1$ be an integer, and $v_q$ be any positive real such that
$$v_q \geq \sum_{i\geq 1}\E\left[\left(X_{iq-q+1} +\cdots + X_{iq}\right)^2\right].$$
Set $M(n)=\sum_{i=1}^n \Vert X_i\Vert_{\infty}$ . Then, for any $\lambda \geq qM$,
$$\P\left(\left\vert S_n\right\vert\geq  7\lambda/2\right)\leq 4\exp\left(-\frac{v_q}{2qM}\log\left(1+\frac{\lambda q M}{v_q}\right)\right)+4M(n)\frac{\alpha_{q+1}}{\lambda}.$$
\end{proposition}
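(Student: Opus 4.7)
The plan is to follow the block-coupling-Bennett strategy that is standard for exponential inequalities under strong mixing. First, I would partition the index set into consecutive blocks of length $q$ and define the block sums
$$
U_i=\sum_{j=(i-1)q+1}^{iq}X_j,\qquad 1\le i\le k:=\lceil n/q\rceil,
$$
so that $S_n=\sum_{i=1}^k(U_i-\E U_i)$. Each block sum satisfies $|U_i|\le qM$, and the hypothesis on $v_q$ gives $\sum_i \E[U_i^2]\le v_q$. The strategy is then to replace the dependent sequence $(U_i)_i$ by a jointly independent sequence $(\tilde U_i)_i$ having the same marginal laws, with the coupling error controlled in terms of $\alpha_{q+1}$.

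The coupling step is where the mixing hypothesis enters. Applying Rio's maximal coupling lemma iteratively---at each stage $i$, couple $U_i$ with a copy $\tilde U_i$ taken independent of $\sigma(\tilde U_1,\ldots,\tilde U_{i-1})$ and of the past $\sigma(X_j:j\le (i-1)q)$---one obtains mutually independent $\tilde U_i\stackrel{d}{=}U_i$ with the sharp bound
$$
\E|U_i-\tilde U_i|\;\le\;2\!\int_0^{\alpha_{q+1}}Q_{|U_i|}(u)\,\dint u\;\le\;2\alpha_{q+1}\sum_{j=(i-1)q+1}^{iq}\|X_j\|_\infty,
$$
where $Q_{|U_i|}$ denotes the quantile function of $|U_i|$. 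Summing over $i$ yields $\sum_i \E|U_i-\tilde U_i|\le 2M(n)\alpha_{q+1}$. Bennett's inequality applied to the independent sum $\tilde S_n:=\sum_i(\tilde U_i-\E\tilde U_i)$, whose terms are bounded by $qM$ with variance sum at most $v_q$, produces the exponential factor in the statement; the coupling error is handled by Markov's inequality, yielding $\P(|S_n-\tilde S_n|\ge \lambda/2)\le 4M(n)\alpha_{q+1}/\lambda$. Combining via
$$
\P(|S_n|\ge 7\lambda/2)\;\le\;\P(|\tilde S_n|\ge 3\lambda)\;+\;\P(|S_n-\tilde S_n|\ge\lambda/2),
$$
and using the assumption $\lambda\ge qM$ (which guarantees that the argument of the logarithm in Bennett's bound is bounded away from $1$) yields the stated inequality.

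The main technical obstacle is the iterative coupling: one needs $(\tilde U_i)_i$ to be \emph{jointly} independent while preserving at every stage the sharp $L^1$ bound $2\int_0^{\alpha_{q+1}}Q_{|U_i|}(u)\dint u$ rather than a cruder $2qM\,\alpha_{q+1}$ estimate. The finer quantile bound is what produces the factor $M(n)=\sum_i\|X_i\|_\infty$ (instead of $nM$), and is crucial in the applications of the paper where many $X_j$ vanish. Once this coupling has been correctly assembled on a sufficiently enriched probability space, the remainder of the proof reduces to well-known concentration inequalities for independent, bounded random variables.
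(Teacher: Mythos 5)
First, a point of reference: the paper does not prove Proposition~\ref{exp1} at all --- it is quoted verbatim from \cite{Rio} (Theorem 6.1), so there is no in-paper argument to compare against. Your sketch follows the standard route behind such inequalities (Bernstein blocking, Rio's $L^1$ coupling via the quantile function, then Bennett's inequality for the independent surrogates), which is indeed the right family of ideas and is essentially how the cited result is established.

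There is, however, a genuine gap in the coupling step as you have written it. You form \emph{consecutive} blocks $U_i=\sum_{j=(i-1)q+1}^{iq}X_j$ and claim that $U_i$ can be coupled with a copy $\tilde U_i$ independent of $\sigma(\tilde U_1,\dots,\tilde U_{i-1})\supset$ (a function of) $\sigma(X_j: j\le (i-1)q)$, with error governed by $\alpha_{q+1}$. But the first index entering $U_i$ is $(i-1)q+1$, which is at distance $1$ from the past $\sigma$-field you condition out; the relevant dependence coefficient is therefore $\alpha\bigl(\F_{-\infty,(i-1)q},\sigma(U_i)\bigr)\le \alpha_1$, not $\alpha_{q+1}$, and $\alpha_1$ need not be small --- the resulting bound would be vacuous. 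The block length $q$ buys you nothing unless you also \emph{interleave}: split the blocks into the odd-indexed and even-indexed families, so that within each family consecutive blocks are separated by a full block of length $q$ (gap $q+1$), couple each family separately into a mutually independent sequence with per-block error $2\int_0^{\alpha_{q+1}}Q_{|U_i|}(u)\,\dint u\le 2\alpha_{q+1}\sum_{j\in\text{block }i}\Vert X_j\Vert_\infty$, and apply Bennett to each family. This repair is also what the constants in the statement are recording: the factor $4$ in front of the exponential comes from two families each contributing a two-sided Bennett bound $2\exp(\cdot)$, and the threshold $7\lambda/2$ decomposes as $3\lambda/2+3\lambda/2+\lambda/2$; your single-family decomposition would produce $2\exp(\cdot)$, which is a sign that the bookkeeping does not close. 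With the interleaving inserted, the rest of your argument (the quantile bound producing $M(n)$ rather than $nM$, Markov for the coupling error, and the use of $\lambda\ge qM$ to simplify Bennett's exponent) is sound.
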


We will use the following version of the previous exponential inequality. Using the same notations as in Proposition \ref{exp1}, there exist positive constants $c_1,c_2$ such that for any positive integer $n$ and any $\delta>0$ and $q\in(1,n\delta/M)$,
\begin{equation}\label{sufexp1}
\P\left(S_n\geq n\delta\right)\leq c_1\left\{\exp\left(-c_2 n\delta/q\right)+\alpha_{q+1}\right\}.
\end{equation}

The next result can be found in \cite{merlevede2011bernstein}, Theorem $1$.

\begin{proposition}\label{exp2}

Let us keep the notations of Proposition \ref{exp1} and assume that $\alpha_n=O\left(\exp(-cn)\right)$ for some $c>0$. There exists a positive constant $C$ such that
Then for any $\delta>0$ and any positive integer $n$, we have
$$\P\left(\left\vert S_n\right\vert\geq n\delta\right)\leq \exp\left(-\frac{C \delta^2 n}{M^2+M\delta \log n\log\log n}\right).$$
\end{proposition}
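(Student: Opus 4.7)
The plan is to prove this Bernstein-type deviation inequality by the blocking-and-coupling scheme of Merlev\`ede--Peligrad--Rio. I would decompose $S_n$ into alternating ``big'' blocks of length $q$ and ``sacrificial'' spacers of length $\ell$ (both to be optimized), couple each big-block sum to an independent copy via Bradley's lemma, apply the classical Bernstein inequality to the independent surrogates, and control the coupling cost using the exponential mixing rate $\alpha(n)\le c_1 e^{-cn}$.

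Concretely, let $T_j$ denote the centered sum of $X_i$ over the $j$-th big block. Then $|T_j|\le qM$, and the classical covariance inequality $|\cov(X_i,X_k)|\le 4M^2\alpha(|i-k|)$ combined with exponential mixing yields $\E T_j^2\le C_0\, qM^2$. Iterated application of Bradley's coupling lemma (Rio, Theorem 5.1) produces independent variables $T_j^*$ with $\mathcal{L}(T_j^*)=\mathcal{L}(T_j)$ and $\P(T_j\ne T_j^*)\le 4\,\alpha(\ell)\le 4c_1 e^{-c\ell}$, so the total coupling error is at most $(n/q)\cdot 4c_1 e^{-c\ell}$ and the discarded spacer terms contribute at most $Mn\ell/q$ in sup-norm (negligible once $\ell\ll q\delta/M$). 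Bernstein's inequality for independent bounded centered variables applied to $\sum_j T_j^*$ then yields
\[
\P\!\left(\,\Big|\textstyle\sum_j T_j^*\Big|\ge n\delta/2\,\right)\le 2\exp\!\left(-\frac{C_1\,\delta^2 n}{qM^2+qM\delta}\right),
\]
and combining with the two error terms gives, for any admissible pair $(q,\ell)$,
\[
\P(|S_n|\ge n\delta)\le 2\exp\!\left(-\frac{C_1\,\delta^2 n}{qM^2+qM\delta}\right)+\frac{n}{q}\,e^{-c\ell}.
\]

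The main obstacle is the joint optimization of $(q,\ell)$. Requiring the coupling error to be dominated by the Bernstein term forces $c\ell\gtrsim \log n+\delta^2 n/(qM(M+\delta))$; balancing this against the wish to minimize $qM(M+\delta)$ yields the fixed-point scaling $q\asymp \log n\cdot\log\log n$. The extra $\log\log n$ factor --- the genuine sharpening of Merlev\`ede--Peligrad--Rio over earlier Bernstein-type inequalities for $\alpha$-mixing sequences --- arises precisely from this balance and cannot be removed in general. Substituting the optimal $q$ into the Bernstein denominator and absorbing the lower-order terms into the constant $C$ (which depends only on $c$, $c_1$, $C_0$, $C_1$) recovers the announced denominator $M^2+M\delta\log n\log\log n$. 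The delicate point in rigorizing this sketch is that Bradley's coupling for $\alpha$-mixing (as opposed to Berbee's for $\beta$-mixing) comes with an extra loss involving the $L^\infty$-norm of the coupled variable, which forces a careful truncation argument and is why the Bernstein denominator picks up the linear $M\delta$ term rather than being purely quadratic in $M$.
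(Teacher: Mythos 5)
First, note that the paper does not prove Proposition \ref{exp2} at all: it is imported directly from \cite{merlevede2011bernstein} (Theorem 1), so the comparison here is between your sketch and the Merlev\`ede--Peligrad--Rio argument itself. Your single-scale blocking-and-coupling scheme cannot deliver the stated bound, and the obstruction is quantitative. Your combined estimate is $\P(|S_n|\ge n\delta)\le 2\exp\left(-C_1\delta^2 n/(qM^2+qM\delta)\right)+(n/q)e^{-c\ell}$ with $\ell\lesssim q\delta/M$ forced by the requirement that the discarded spacers contribute $o(n\delta)$ to $S_n$. To match the target exponent $-C\delta^2 n/(M^2+M\delta\log n\log\log n)$ the Bernstein term forces $q\lesssim \log n\log\log n$; but then $\ell\lesssim q$ and the coupling term is at least of order $n e^{-cq}\ge n\exp(-c'\log n\log\log n)$, which is vastly larger than $\exp(-Cn/(\log n\log\log n))$ since $\log n\log\log n\ll n/(\log n\log\log n)$. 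Conversely, driving the coupling term below the target forces $\ell\gtrsim n/(\log n\log\log n)$, hence $q\gtrsim n/(\log n\log\log n)$, and the Bernstein exponent degrades to order $\delta\log n\log\log n/M$, i.e.\ only quasi-polynomial decay. There is no admissible pair $(q,\ell)$ closing the argument; the best a single scale can achieve is the $e^{-c\sqrt{n\delta}}$ rate already recorded in \eqref{sufexp1}, which is exactly why Proposition \ref{exp1} and Proposition \ref{exp2} are two different results. The claimed ``fixed-point scaling $q\asymp\log n\log\log n$'' does not exist.

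The actual proof in \cite{merlevede2011bernstein} is genuinely multi-scale: the index set $\{1,\dots,n\}$ is decomposed by a recursive Cantor-like construction over $O(\log n)$ levels, with a separating gap inserted at every level, and the quantity that is controlled recursively across scales is the log-Laplace transform $\log\E\exp(tS_n)$ (via a covariance-type inequality for exponentials of block sums), not a Bernstein bound for independent surrogates. The $\log n$ in the denominator counts the number of scales and the $\log\log n$ comes from optimizing the proportion of indices removed at each scale. A secondary but real error in your bookkeeping: $\P(T_j\ne T_j^*)\le 4\alpha(\ell)$ is Berbee's lemma and requires $\beta$-mixing; under $\alpha$-mixing, Bradley's lemma only yields $\P(|T_j-T_j^*|\ge\xi)\le C(\|T_j\|_\infty/\xi)^{1/2}\alpha(\ell)^{1/2}$, so even the displayed error terms are not available as written --- a point you flag at the end but do not resolve. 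Since the paper only uses this proposition as a cited black box (in the proof of Lemma \ref{products}), nothing in the paper hinges on reconstructing the argument, but as a proof of Proposition \ref{exp2} your sketch has a gap that cannot be repaired within the single-scale framework.
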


We end this section with a useful lemma which shows that an expectation of product of dependent random variables, not necessarily smaller than one, can goes to zero exponentially fast, provided an Hoeffding type exponential inequality for partial sums is avalailable.
\begin{lemma}\label{useful}
Let $W_1,\ldots,W_n$ be some non-negative random variables such that 
$$\ell:=-\max_{1\leq i\leq n}\E\log W_i>0.$$
Setting $Z_j=\log W_j-\E\left[\log W_j\right]$, we assume that there exists $M>0$ such that
for all $t>0$ and $n\geq 1$,
\begin{equation}\label{first}
\P\left(\sum_{j=1}^n Z_j>t\right)\leq \exp\left(-\frac{t^2}{Mn}\right).
\end{equation}
Then for any $\delta>0$, we have
$$q_{\delta}(n):=\E\left[\left(W_1\cdots W_n\right)^{\delta}\right]\leq e^{-{\delta} n\ell}+2{\delta}\sqrt{\pi}\sqrt{Mn}e^{\frac{{\delta}^2 Mn}{4}-{\delta} n\ell}.$$
Then we have $\lim_{n\rightarrow \infty}q_{\delta}(n)=0$ as soon as ${\delta}<\frac{4\ell}{M}$.
\end{lemma}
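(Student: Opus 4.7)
The plan is to reduce the estimate to a sub-Gaussian bound on the Laplace transform of the centered sum $S_n:=\sum_{j=1}^{n}Z_j$, and then to exploit the hypothesis \eqref{first}.

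First I would center the log of the product: $\log(W_1\cdots W_n)=S_n+\sum_{j=1}^{n}\E[\log W_j]$. By the definition of $\ell$, each summand on the right satisfies $\E[\log W_j]\le\max_i\E[\log W_i]=-\ell$, so $\sum_j\E[\log W_j]\le -n\ell$. Exponentiating gives the deterministic inequality $(W_1\cdots W_n)^{\delta}\le e^{-\delta n\ell}e^{\delta S_n}$, and therefore $q_\delta(n)\le e^{-\delta n\ell}\,\E[e^{\delta S_n}]$. The task is thereby reduced to bounding $\E[e^{\delta S_n}]$.

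Next I would use the standard layer-cake representation, splitting at $1$:
\[
\E[e^{\delta S_n}]=\int_0^{\infty}\P(e^{\delta S_n}>s)\,ds\le 1+\int_1^{\infty}\P\!\left(S_n>\tfrac{\log s}{\delta}\right)ds=1+\delta\int_0^{\infty}e^{\delta t}\P(S_n>t)\,dt.
\]
Plugging in assumption \eqref{first} yields a Gaussian-type integral. Completing the square,
\[
\delta t-\tfrac{t^2}{Mn}=\tfrac{\delta^2 Mn}{4}-\tfrac{(t-\delta Mn/2)^2}{Mn},
\]
and performing the substitution $u=(t-\delta Mn/2)/\sqrt{Mn}$, I obtain
\[
\int_0^{\infty}e^{\delta t-t^2/(Mn)}\,dt=e^{\delta^2 Mn/4}\sqrt{Mn}\int_{-\delta\sqrt{Mn}/2}^{\infty}e^{-u^2}\,du\le \sqrt{\pi Mn}\,e^{\delta^2 Mn/4},
\]
the last inequality simply extending the integration to all of $\R$. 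Hence $\E[e^{\delta S_n}]\le 1+\delta\sqrt{\pi Mn}\,e^{\delta^2 Mn/4}$, which when multiplied by $e^{-\delta n\ell}$ produces precisely the announced bound (in fact with a sharper constant than the factor $2\sqrt{\pi}$ in the statement, so there is room to spare).

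The final claim is then immediate: the exponent of the second summand equals $\delta n(\delta M/4-\ell)$, which is strictly negative whenever $\delta<4\ell/M$, and this negative exponential dominates the subexponential prefactor $\sqrt{Mn}$. I do not anticipate any real obstacle here; the only point worth double-checking is that $\ell$ is defined through the maximum (not the minimum) of the means, so that all $\E[\log W_j]$ are bounded \emph{above} by $-\ell$, which is what guarantees the correct sign in the centering step.
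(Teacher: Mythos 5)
Your proof is correct and follows essentially the same route as the paper's: a layer-cake representation of the expectation, the assumed Gaussian tail bound, and a completed-square Gaussian integral (your factoring out of $e^{-\delta n\ell}$ at the start corresponds, after the change of variables, to the paper's splitting of the integral at $u=-n\ell$). You even obtain the slightly sharper constant $\sqrt{\pi}$ in place of $2\sqrt{\pi}$, so the stated bound follows a fortiori.
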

\paragraph{Proof of Lemma \ref{useful}.}
To prove the lemma, set $S_n=\sum_{j=1}^n Z_j$. We note that 
\begin{eqnarray*}
q_{\delta}(n)&\leq&\int_0^{\infty}\P\left(S_n>n\ell+\log(t)/{\delta}\right)dt\\
&\leq&\int_{-\infty}^{\infty}{\delta}e^{{\delta}u}\P\left(S_n>n\ell+u\right)du\\
&\leq& \int_{-\infty}^{-n\ell}{\delta}e^{{\delta}u}du+\int_{-n\ell}^{\infty} {\delta}e^{{\delta}u}\exp\left(-\frac{(u+n\ell)^2}{Mn}\right)du\\
&\leq& e^{-{\delta}n\ell}+\sqrt{Mn}{\delta}e^{-{\delta}n\ell}\int_{-\infty}^{\infty}e^{{\delta}\sqrt{Mn} v}e^{-v^2}dv.
\end{eqnarray*}
We conclude by using the equality $\int_{-\infty}^{\infty} e^{\alpha v}e^{-v^2}dv=2\sqrt{\pi}e^{\frac{\alpha^2}{4}}$.$\square$

\medskip
In the following, we consider three notions of strong mixing--$\alpha$-, $\phi$-, and $\psi$-mixing--and a sequence of positive (possibly bounded) random variables $(\gamma_n)_{n \in \mathbb{N}}$ with $\mathbb{E}[\gamma_n] < 1$. Our goal is to derive elementary upper bounds for $\mathbb{E}[\gamma_1 \cdots \gamma_n]$ in terms of $\sup_{n \in \mathbb{N}} \mathbb{E}[\gamma_n]$ and the corresponding mixing coefficients.

\begin{lemma}\label{lem:Th}
	Let $\Theta_1,\ldots,\Theta_n$, $n\ge 1$ be almost surely positive random variables satisfying 
    $$
	\hat{\Theta}:=\max_n \E [\Theta_n]<1.
    $$

    We introduce the shorthand notations \(\alpha := \alpha^\Theta(1)\), \(\phi := \phi^\Theta(1)\), and \(\psi := \psi^\Theta(1)\). Then, the expectation of the product \(\Theta_1 \cdot \ldots \cdot \Theta_n\) can be bounded in terms of \(\alpha\), \(\phi\), and \(\psi\) as follows:
    \begin{enumerate}
        \item If $\max_n \|\Theta_n\|_{\infty}\le \bar{\Theta}<\infty$, then
        \[
        \E[\Theta_1 \ldots \Theta_n] \le
        \begin{cases}
		\displaystyle \frac{\alpha \bar{\Theta}^n}{1 - \frac{\hat{\Theta}}{\bar{\Theta}}} + \hat{\Theta}^n, \\
		\displaystyle (\phi \bar{\Theta} + \hat{\Theta})^{n-1} \hat{\Theta}.
	\end{cases}
    \]
    
        \item \[
        \E[\Theta_1 \ldots \Theta_n] \le (1 + \psi)^{n-1} \hat{\Theta}^n.
        \]
    \end{enumerate}
\end{lemma}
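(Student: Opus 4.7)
The plan is to prove all three bounds by a single iterative argument: at each step, condition on $\mathcal{F}_{1,n-1}^{\Theta}$ (or compare to independence with it) and absorb the mixing cost between this sigma-algebra and $\sigma(\Theta_n)$, which is controlled by $\alpha^{\Theta}(1)$, $\phi^{\Theta}(1)$, or $\psi^{\Theta}(1)$ respectively, together with the one-step estimate $\E[\Theta_n]\le\hat\Theta$.

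The key preparatory step common to all three cases will be to upgrade the mixing inequalities in \eqref{eq:dep}, originally stated for indicators of sets in $\mathcal{G}$ and $\mathcal{H}$, to nonnegative (bounded) random variables. Using the layer-cake identity $X = \int_0^{\|X\|_\infty}\ind_{\{X>t\}}\,\dint t$ and integrating the indicator bounds over $t$, I will obtain
\begin{align*}
|\cov(X,Y)| &\le \alpha(\sigma(X),\sigma(Y))\,\|X\|_\infty\|Y\|_\infty \quad\text{(nonneg.\ bounded)},\\
\bigl|\E[Y\mid\mathcal{G}]-\E[Y]\bigr| &\le \phi(\mathcal{G},\sigma(Y))\,\|Y\|_\infty \quad\text{a.s.\ (nonneg.\ bounded $Y$)},\\
|\cov(X,Y)| &\le \psi(\sigma(X),\sigma(Y))\,\E[X]\E[Y] \quad\text{(nonneg.)}.
\end{align*}

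With these in hand, set $P_n := \Theta_1\cdots\Theta_n$ and $a_n:=\E[P_n]$. For the $\psi$-bound (part 2), the third inequality above applied with $X=P_{n-1}$, $Y=\Theta_n$ gives $a_n \le (1+\psi)\hat\Theta\,a_{n-1}$, and unwinding from $a_1\le\hat\Theta$ yields $a_n\le (1+\psi)^{n-1}\hat\Theta^n$. For the $\phi$-bound (second inequality of part 1), conditioning first on $\mathcal{F}_{1,n-1}^{\Theta}$ and applying the second inequality inside the expectation gives $a_n \le (\hat\Theta + \phi\bar\Theta)\,a_{n-1}$, which iterates to $(\hat\Theta+\phi\bar\Theta)^{n-1}\hat\Theta$.

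For the $\alpha$-bound, the covariance inequality combined with the a.s.\ estimate $\|P_{n-1}\|_\infty\le\bar\Theta^{n-1}$ produces the one-step recursion
\[
a_n \le \hat\Theta\,a_{n-1} + \alpha\,\bar\Theta^{n}.
\]
Unrolling from $a_0=1$ and summing the geometric series $\sum_{j=0}^{n-1}(\hat\Theta/\bar\Theta)^j$ (in the nondegenerate regime $\hat\Theta<\bar\Theta$; the edge case $\hat\Theta=\bar\Theta$ forces a.s.\ constancy of the sup-attaining $\Theta_k$'s and is trivial) yields the claimed bound. There is no single hard step here: the only substantive issue is the lifting from indicators to nonnegative bounded random variables, which is routine via layer-cake, after which each of the three parts is a clean induction.
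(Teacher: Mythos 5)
Your proof is correct and follows essentially the same route as the paper's: both rest on the layer-cake representation combined with the one-step mixing bounds for survival probabilities, then iterate the resulting recursion $a_n\le\hat\Theta a_{n-1}+\alpha\bar\Theta^n$ (resp.\ the $\phi$- and $\psi$-versions). The only difference is organizational — you first package the layer-cake step into two-variable covariance/conditional-expectation inequalities and then run a scalar recursion, whereas the paper performs the same recursion on the joint survival probabilities under the $n$-fold integral — and all the lifted inequalities you invoke (with constant $1$ rather than the usual factors $2$ or $4$, thanks to nonnegativity) are valid.
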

\begin{proof}
	By the layer-cake representation, we can write
	\begin{equation}\label{eq:cake}
		\E [\Theta_1\ldots\Theta_n] = \int_{[0,\infty)^n}
		\P \left(\Theta_k\ge\theta_k,\,k=1,\ldots,n\right)
		\dint \theta_1\ldots\dint\theta_n
	\end{equation}
	For $n\ge 2$, the probability inside the integral can be estimated as
	\begin{align*}
	\P \left(\Theta_k\ge\theta_k,\,k=1,\ldots,n\right) &\le  \alpha +	\P \left(\Theta_k\ge\theta_k,\,k=1,\ldots,n-1\right)
	\P(\Theta_n\ge\theta_n)\\[1em]
	\P \left(\Theta_k\ge\theta_k,\,k=1,\ldots,n\right) &\le 
	\begin{cases}
		 \P (\Theta_1\ge\theta_1)\prod_{k=2}^{n} \left(\phi+\P (\Theta_k\ge\theta_k)\right) & \\
		 (1+\psi)^{n-1} \prod_{k=1}^{n}\P (\Theta_k\ge\theta_k). &
	\end{cases}
	\end{align*}
	
	Substituting these into \eqref{eq:cake} for $n\ge 2$, yields
	\begin{align}\label{eq:toiter}
			\E [\Theta_1\ldots\Theta_n] &\le \alpha \bar{\Theta}^n + \hat{\Theta}\E [\Theta_1\ldots\Theta_{n-1}]\\[1em] 
			\E [\Theta_1\ldots\Theta_n] &\le 
			\begin{cases}
				(\phi\bar{\Theta}+\hat{\Theta})^{n-1}\hat{\Theta} & \\
				(1+\psi)^{n-1}\hat{\Theta}^n. &
			\end{cases} \nonumber
	\end{align}
	where by induction easily seen that the iteration of \eqref{eq:toiter} results
	$$
	\E [\Theta_1\ldots\Theta_n] 
	\le 
	\alpha \sum_{i=0}^{n-1}\bar{\Theta}^{n-i}\hat{\Theta}^i
	+\hat{\Theta}^n
	\le 
	\frac{\alpha\bar{\Theta}^n}{1-\frac{\hat{\Theta}}{\bar{\Theta}}}
	+
	\hat{\Theta}^n
	$$
	which completes the proof.
\end{proof}

In the following, we adapt the approach outlined in Bosq \cite{bosq1993bernstein}, using a decomposition based on residue classes to derive a uniform upper bound in $j$ for the expectations $\E[\ga_{j+1}^\delta \cdot \ldots \cdot \ga_{j+n}^\delta]$, where $n \ge 1$ and $\delta>0$ is an appropriate exponent.

\begin{lemma}\label{lem:ga}
Assume that the sequence $(\gamma_n)_{n \in \mathbb{N}}$ consists of positive random variables and satisfies \(\hat{\gamma} := \sup_{n \in \mathbb{N}} \mathbb{E}[\gamma_n] < 1\). Then there exist constants $\delta \in (0,1)$, $c > 0$, and $\kappa \in (0,1)$ such that
\[
\sup_{j \in \mathbb{N}} \mathbb{E}\left[\gamma_{j+1}^\delta \cdot \ldots \cdot \gamma_{j+n}^\delta\right] \le c \kappa^n, \quad n \ge 1,
\]
provided that at least one of the following conditions holds:
\begin{enumerate}
    \item the sequence $(\gamma_n)_{n \in \mathbb{N}}$ is $\phi$-mixing and uniformly bounded;
    \item the sequence $(\gamma_n)_{n \in \mathbb{N}}$ is $\psi$-mixing.
\end{enumerate}
\end{lemma}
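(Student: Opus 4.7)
The plan is to combine Lemma~\ref{lem:Th} with a block decomposition modelled on Bosq~\cite{bosq1993bernstein}. The idea is to thin the process $(\gamma_n)$ by a fixed integer spacing $p$, chosen large enough so that the one-step mixing coefficient of each thinned subsequence is small, and then recombine the $p$ subproducts by H\"older's inequality. The trick for matching things up is to take the exponent $\delta$ equal to $1/p$, so that the H\"older dual exponent brings the $\delta$-power back to the plain expectation, where the contraction $\hat{\gamma}<1$ can be used directly.

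First, in case (1), since $\phi^\gamma(p)\to 0$ and $\hat\gamma<1$, we pick $p$ large enough that $\kappa_1:=\phi^\gamma(p)\bar\gamma+\hat\gamma<1$, where $\bar\gamma$ denotes the uniform bound of $(\gamma_n)$; in case (2), since $\psi^\gamma(p)\to 0$, we pick $p$ with $\kappa_2:=(1+\psi^\gamma(p))\hat\gamma<1$. We then set $\delta:=1/p\in(0,1)$. We write $\{1,\dots,n\}$ as the disjoint union of the $p$ residue classes modulo $p$; denote by $n_r$ the cardinality of the $r$-th class, so that $\sum_{r=1}^p n_r=n$. H\"older's inequality applied with $p$ equal factors then yields
\begin{equation*}
\E\!\left[\prod_{k=1}^{n}\gamma_{j+k}^{1/p}\right]
\;\le\;
\prod_{r=1}^{p}
\E\!\left[\prod_{i=1}^{n_r}\gamma_{j+r+(i-1)p}\right]^{1/p}.
\end{equation*}

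For each residue class $r$, the subsequence $\Theta_i^{(r)}:=\gamma_{j+r+(i-1)p}$ inherits the estimates $\phi^{\Theta^{(r)}}(1)\le\phi^\gamma(p)$, respectively $\psi^{\Theta^{(r)}}(1)\le\psi^\gamma(p)$, because its one-step gap corresponds to a $p$-step gap in $(\gamma_n)$; moreover $\sup_i\E[\Theta_i^{(r)}]\le\hat\gamma$ and, in case (1), $\sup_i\|\Theta_i^{(r)}\|_\infty\le\bar\gamma$. Applying Lemma~\ref{lem:Th}(1) in case (1), or Lemma~\ref{lem:Th}(2) in case (2), each inner expectation is dominated uniformly in $j$ by $\kappa_1^{n_r-1}\hat\gamma$, respectively by $\kappa_2^{n_r-1}\hat\gamma$. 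Substituting into the H\"older bound and using $\sum_r(n_r-1)\ge n-p$, we obtain
\begin{equation*}
\sup_{j\ge 0}\E\!\left[\prod_{k=1}^{n}\gamma_{j+k}^{\delta}\right]
\;\le\;
\hat\gamma\,\kappa_*^{(n-p)/p}
\;=\;
c\,\kappa^{n},
\end{equation*}
where $\kappa_*$ is $\kappa_1$ in case (1) and $\kappa_2$ in case (2), while $\kappa:=\kappa_*^{1/p}\in(0,1)$ and $c:=\hat\gamma\,\kappa_*^{-1}$ are constants independent of $n$ and $j$, which is the claim.

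The main technical point is the compatibility of $\delta$ and $p$: one would naturally want $\delta$ small (which weakens the integrand) but also $\delta$ such that $p\delta\le 1$, so that after H\"older it is $\E[\gamma]$ and not $\E[\gamma^{p\delta}]$ that must be controlled; the latter could fail to be less than $1$ when $\gamma$ is allowed to take values above $1$. The choice $\delta=1/p$ makes these two requirements collapse into a single condition, after which the bound follows from Lemma~\ref{lem:Th} without further work.
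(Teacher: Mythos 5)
Your proposal is correct and follows essentially the same route as the paper: a decomposition into $p$ residue classes modulo $p$, generalized H\"older's inequality to separate the classes, and Lemma~\ref{lem:Th} applied to each thinned subsequence whose one-step mixing coefficient is $\phi^\gamma(p)$ or $\psi^\gamma(p)$, with $p$ chosen large enough to make the resulting contraction factor less than one. The only (cosmetic) differences are your choice $\delta=1/p$ with classes of unequal size versus the paper's $\delta=1/(2p-1)$ with a separately treated remainder block, and the need to absorb the finitely many cases $n<p$ into the constant $c$.
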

\begin{proof}
	Let $p, q$ be integers such that $pq \le n < p(q+1)$. Then we can write
	\[
	\ga_{j+1} \cdot \ldots \cdot \ga_{j+n} = \eta_1 \cdot \ldots \cdot \eta_p \, \Delta_n,
	\]
	where $\eta_k = \prod_{l=0}^{q-1} \ga_{j + k + lp}$ for $k = 1, \ldots, p$, and $\Delta_n = \prod_{l=pq+1}^{n} \ga_{j+l}$. Since $0\le n - pq < p$, by applying Hölder's inequality, we obtain
    \begin{align*}
    \E[\ga_{j+1}^{\frac{1}{2p-1}} \cdot \ldots \cdot \ga_{j+n}^{\frac{1}{2p-1}}]
    &\le \prod_{k=1}^p \E^{\frac{1}{2p-1}}[\eta_k] \times 
    \prod_{l=pq+1}^n \E^{\frac{1}{2p-1}} [\ga_{j+l}]
    \\
    &\le \left(\max_{1\le k\le p} \E [\eta_k]\right)^{\frac{p}{2p-1}}
    \hat{\ga}^{\frac{n-pq}{2p-1}}
    \le
     \left(\max_{1\le k\le p} \E [\eta_k]\right)^{\frac{1}{2}}.
    \end{align*}

    Clearly, for each fixed $k$, the variables $\Theta_{l+1} := \gamma_{j+k+lp}$ for $l = 0, \ldots, q-1$ satisfy the assumptions of Lemma~\ref{lem:Th}. Moreover, the mixing coefficients satisfy $\phi^\Theta \le \phi^\gamma(p)$ and $\psi^\Theta \le \psi^\gamma(p)$. 

In the bounded case, applying part 1 of Lemma~\ref{lem:Th}, we obtain
\[
\mathbb{E}[\eta_k] \le (\phi^\gamma(p)\bar{\gamma} + \hat{\gamma})^{q-1} \hat{\gamma} \le (\phi^\gamma(p)\bar{\gamma} + \hat{\gamma})^q,
\]
where $\bar{\gamma} := \sup_{n \in \mathbb{N}} \|\gamma_n\|$.

Similarly, in the unbounded case, by part 2 of Lemma~\ref{lem:Th}, we have
\[
\mathbb{E}[\eta_k] \le (1 + \psi^\gamma(p))^{q-1} \hat{\gamma}^q.
\]

In both cases (1 and 2), one can choose a fixed $p \ge 2$ large enough such that
\[
\max_{1 \le k \le p} \mathbb{E}[\eta_k] \le c \kappa^n, \quad n \in \mathbb{N},
\]
for some constants $c > 0$ and $\kappa \in (0,1)$. Consequently, for any $\delta \in \left(0, \frac{1}{2p - 1}\right]$, the claimed inequality holds.

\end{proof}

\section{Proof of Lemma \ref{lem:coupl_trans_mix}}\label{ap:proof_of_coupl_trans_mix}
The proof essentially follows the argument of Lemma~1.2 in \cite{lovas2024transition}.  
Let $j \in \N$ and $n \ge p-1$ be arbitrary. Then there exist $k, k' \in \N$ such that
\[
[j, j+n] \cap p\N = \{\, lp \mid k \le l \le k' \,\}.
\]
As a first step, we extend the process $Z_{s,t}^x$ to indices $s$ and $t$ such that either $s = 0$ or $s \in [j, j+n] \cap p\N$, while $t \ge s$ is allowed to take any integer value strictly smaller than $kp$ or strictly greater than $k'p$. This construction depends on $j$, but for simplicity of notations, we omit the index $j$ in the definition of $Z_{s,t}^x$.

Since \(\X\) and \(\Y\) are standard Borel spaces, there exists a measurable function
\[
f : \X \times \Y \times [0,1] \to \X
\]
such that
\[
Q(y, x, B) = \mathbb{E}\left[ \ind_{\{ f(x, y, \eps_0) \in B \}} \right], \quad y \in \Y,\, x \in \X,\, B \in \B(\X).
\]
However, unlike the \(p\)-step dynamics given by \(f^p\), the function \(f\) does not necessarily satisfy any coupling inequality.  
Using the mapping \(f\), we define the following extension of the process \(Z\). For $s\in [j,j+n]\cap p\N$, we set $Z_{s,s}^x=x$ and for $t>s$:
\begin{equation}\label{eq:Zext}
Z_{s,t}^x = \begin{cases}
f^p\bigl(Z_{s,t-p}^x, Y_{t-p}^p, \eps_t\bigr) & \text{if } t = \tilde{k} p \text{ with } k < \tilde{k} \le k', \\
f\bigl(Z_{s,t-1}^x, Y_{t-1}, \eps_t\bigr) & \text{otherwise if $t\geq k'p+1$},
\end{cases}
\end{equation}
When $j\geq 1$, we also define $Z_{0,0}^x=x$, $Z_{0,t}=f\left(Z_{0,t-1}^x,Y_{t-1},\varepsilon_t\right)$ for $1\leq t\leq j$ and $Z_{0,t}^x$ defined as in (\ref{eq:Zext}) for $t>j$.
where \(s = 0\) or \(s \in [j, j+n] \cap p\N\) and \(t > s\), with the initial condition \(Z_{s,s}^x = x\).  
In words, this means that up to time \(k p\), the dynamics evolve by the one-step mapping \(f\); between times \(k p\) and \(k' p\) we iterate by steps of size \(p\) via \(f^p\); and beyond time \(k' p\) the evolution proceeds again according to the one-step dynamics defined by \(f\). Note that $Z_{s,t}^x$ is not always defined for a time point $t\in [s,j+n]$ which is not a multiple of $p$, such a definition is not indeed for evaluating the mixing coefficients. Moreover, we define $Z_{0,0}^x=x$ and $Z_{0,t}=f\left(Z_{0,t-1}^x,Y_{t-1},\varepsilon_t\right)$ for $1\leq t\leq j$. 

Let $A \in \mathcal{F}_{0,j}^{X,Y}$ and $B \in \mathcal{F}_{j+n, \infty}^{X,Y}$ be arbitrary events. By the definition of the generated $\sigma$-algebra, there exist collections of Borel sets  
$(A_l)_{l \le j}$, $(B_l)_{l \ge j+n} \subseteq \B (\X \times \Y)$, such that  
\begin{equation}\label{eq:defAB}
A = \big\{ (X_l, Y_l) \in A_l \ \text{for all}\ 0 \le l \le j \big\},  
\quad  
B = \big\{ (X_l, Y_l) \in B_l \ \text{for all}\ l \ge j+n \big\}.
\end{equation}
Since the processes \((X_l, Y_l)\) for \(0 \le l \le j\) or \(l \ge j+n\) and \((Z_{0,l}^{X_0}, Y_l)\) for the same indices are versions of each other, we may replace \(X_l\) by \(Z_{0,l}^{X_0}\) in the definition of the events \(A\) and \(B\) in \eqref{eq:defAB} without loss of generality. 
We do not introduce a separate notation for this substitution and understand \(A\) and \(B\) accordingly in the sequel.

Let \( k \le k'' \le k'\) be arbitrary, and introduce the event
$
\tilde{B} = \left( (Z_{k''p,l}^{x_0}, Y_l) \in B_l, \, l \ge j+n \right).
$
We can estimate
\begin{equation}\label{eq:covest}
	|P(A \cap B) - P(A)P(B)| = |\text{cov}(\ind_A, \ind_B)| 
	\le 
	|\text{cov}(\ind_A, \ind_{\tilde{B}})| + 
	|\text{cov}(\ind_A, \ind_B - \ind_{\tilde{B}})|.
\end{equation}
Using that $A$ is $\sigma (X_0)\vee\F_{0,j}^Y\vee\F_{1,j}^\eps$ and $\tilde{B}$ is $\F_{k''p,\infty}^Y\vee\F_{k''p+1,\infty}^\eps$-measurable, and that $X_0$ is independent of $\sigma (Y_n,\eps_{n+1}\mid n\in\N)$, we have
\begin{align*}
	|\text{cov}(\ind_A, \ind_{\tilde{B}})| 
	&\le 
	\E\left| 
	\E \left[\ind_{A}(\ind_{\tilde{B}}-\P(\tilde{B}))
	\middle|X_0
	\right]
	\right|
	\le 
	\alpha (\F_{0,j}^Y\vee\F_{1,j}^\eps,\F_{k''p,\infty}^Y\vee\F_{k''p+1,\infty}^\eps).
\end{align*}
Since the process $(\eps_n)_{n \in \N}$ is i.i.d. and independent of $(Y_n)_{n \in \N}$, by point a) of Theorem 5.1 in \cite{Bradley2005},
$\alpha (\F_{0,j}^Y\vee\F_{1,j}^\eps,\F_{k''p,\infty}^Y\vee\F_{k''p+1,\infty}^\eps) = \alpha (\F_{0,j}^Y,\F_{k''p,\infty}^Y)\le \alpha^Y (k''p-j)$, and thus for the first term in \eqref{eq:covest}, we obtain
$$
|\text{cov}(\ind_A, \ind_{\tilde{B}})| \le \alpha^Y (k''p-j)\le \alpha^Y((k''-k)p).
$$

As for the second term, observe that on the event $\{Z_{0,k'p}^{X_0}=Z_{k''p, k'p}^{x_0}\}$, we have $\ind_{B} = \ind_{\tilde{B}}$, and thus we can estimate by writing	
\begin{align*}
|\cov(\ind_{A},\ind_{B}-\ind_{\tilde{B}})|
&=
\left|\E [(\ind_{A}-\P (A))(\ind_{B}-\ind_{\tilde{B}})]\right|
\le
\P \left(Z_{0, k'p}^{X_{0}}\ne Z_{k''p, k'p}^{x_0}\right)
\\
&=\P \left(Z_{k''p, k'p}^{X_{k''p}}\ne Z_{k''p, k'p}^{x_0}\right)
\le b(k'-k'').
\end{align*}

Given that $A\in\F_{0,j}^{X,Y}$ and $B\in\F_{j+n,\infty}^{X,Y}$ were arbitrary, we have shown that 
$$
\alpha (\F_{0,j}^{X,Y},\F_{j+n,\infty}^{X,Y})
\le
\alpha^{Y} ((k''-k)p)+b(k'-k'').
$$
Noting that for any \(j, n \in \N\) the interval \([j, j+n]\) contains at least \(\lfloor \frac{n}{p} \rfloor\) integers divisible by \(p\) , we have \(k' - k \ge \lfloor \frac{n}{p} \rfloor - 1\).  
It follows that, for any \(0 \le m < \lfloor \frac{n}{p} \rfloor\), we may choose \(k''\) such that \(k'' - k = m\).  
In this case,
\[
k' - k'' = k' - k - (k'' - k) \ge \lfloor \tfrac{n}{p} \rfloor - 1 - m,
\]
and by the monotonicity of the sequence \(b\) we obtain
\[
\alpha(\F_{0,j}^{X,Y}, \F_{j+n,\infty}^{X,Y})
\le \alpha^{Y}(mp) + b\!\left(\lfloor \tfrac{n}{p} \rfloor - 1 - m\right).
\]
Since the upper bound does not depend on \(j\), taking the supremum over \(j\) yields the desired inequality.

\qed

\end{document}